\documentclass[12pt]{amsart}

\numberwithin{equation}{section}

\usepackage{amsmath,amsthm,amsfonts,eucal,}
\usepackage{xypic}
\usepackage{graphicx}
\usepackage{amssymb}
\usepackage{mathtools}
\usepackage{url}

\hfuzz12pt \vfuzz12pt


\def\cb{{\mathcal B}}
\def\cc{{\mathcal C}}

\def\ch{{\mathcal H}}

\def\cs{{\mathcal S}}


\def\ga{{\mathfrak A}} 
\def\gb{{\mathfrak B}}


\def\bb{{\mathbb B}}
\def\bc{{\mathbb C}}

\def\bj{{\mathbb J}}

\def\bn{{\mathbb N}}

\def\bp{{\mathbb P}}
\def\br{{\mathbb R}}
\def\bt{{\mathbb T}}

\def\bz{{\mathbb Z}}

\def\a{\alpha}

\def\g{\gamma}

\def\p{\pi}

\def\s{\sigma} 

\def\f{\varphi}  
  
\def\om{\omega}

\newtheorem{thm}{Theorem}[section]
\newtheorem{example}[thm]{Example}
\newtheorem{lem}[thm]{Lemma}
\newtheorem{cor}[thm]{Corollary}
\newtheorem{prop}[thm]{Proposition}

\theoremstyle{definition}
\newtheorem{rem}[thm]{Remark}
\newtheorem{defin}[thm]{Definition}

\begin{document}
\title[On the Ryll-Nardzewski Theorem]
{On the Ryll-Nardzewski Theorem for Quantum Stochastic Processes}

\author{Valeriano Aiello}
\address{ Valeriano Aiello\\
Dipartimento di Matematica\\
Universit\`{a} Sapienza di Roma\\
 Roma, Italy}
\email{\texttt{valerianoaiello@gmail.com}}

\author{Simone Del Vecchio}
\address{Simone Del Vecchio\\
Dipartimento di Matematica\\
Universit\`{a} degli studi di Bari\\
Via E. Orabona, 4, 70125 Bari, Italy}
\email{\texttt{simone.delvecchio@uniba.it}}

\author{Stefano Rossi}
\address{Stefano Rossi\\
Dipartimento di Matematica\\
Universit\`{a} degli studi di Bari\\
Via E. Orabona, 4, 70125 Bari, Italy}
\email{\texttt{stefano.rossi@uniba.it}}

\begin{abstract}
We prove a Ryll-Nardzewski Theorem for quantum stochastic processes, that shows that under natural assumptions which generalize the classical probability setting, the distributional symmetries of exchangeability and spreadability are the same. 
We further show that product states on twisted tensor products of $C^*$-algebras provide a source of counterexamples to the Ryll-Nardzewski theorem, namely of quantum stochastic processes which are spreadable but not exchangeable.
Furthermore, in this setting, we also analyze braidability of product states.
We then prove an extended de Finetti Theorem for quantum stochastic processes whose distribution factorizes through twisted tensor products.

\vskip0.1cm\noindent \\
{\bf Mathematics Subject Classification}:  46L55, 46L53, 60G09,  60G10, 46L09.\\
{\bf Key words}: Distributional symmetries, Spreadable processes, exchangeable processes, braid group, non-commutative ergodic theory, free products,
twisted tensor products, product states.
\end{abstract}

\maketitle

\section{Introduction}

Exchangeability is a remarkable distributional symmetry that a sequence of random variables enjoys when its joint 
distribution is invariant under all permutations. Basic examples of such sequences are obtained by taking sequences of i.i.d.  (independent and identically distributed) random variables. De Finetti's celebrated theorem says that, in general, the distribution
of an exchangeable sequence is a mixture ({\it i.e.} a convex combination) of i.i.d. variables. The theorem can also be recast in terms of conditional independence: a sequence is exchangeable if and only if it is conditionally independent with respect to the tail algebra, the intuition beneath this statement being that the variables would  become independent if the value taken by an unaccessible random variable (the one that generates the tail algebra) were known.
The picture is then completed by the Ryll-Nardzewski theorem that, for a sequence of random variables, exchangeability is  the same as spreadability, the seemingly weaker symmetry where the joint distribution is invariant under strictly increasing maps. Phrased differently, spreadability only requires invariance of the joint distribution under taking subsequences.\\
The more general non-commutative setting is much less clear-cut.
While in the framework of non-commutative von Neumann algebras it is still possible to obtain a version
of de Finetti's theorem formulated in terms of conditional independence, which is done by K\"{ostler}
in \cite{K}, spreadability is  a strictly weaker distributional symmetry than exchangeability instead, as shown in
\cite{K}. Nevertheless, non-commutative models do exist where spreadability and exchangeability coincide.
An important case in point is given by the states on the CAR algebra, for which invariance under permutations or under increasing maps is the same, as recently proved in \cite{CDRCAR}. Exchangeable states on the CAR algebra had already been addressed in \cite{CFCMP}, where they were shown to make up a Bauer simplex whose boundary is given by all infinite products of a single ($\bz_2$-invariant) state on $\mathbb{M}_2(\bc)$. As remarked in
\cite{CDRCAR}, it is not the CAR relations that force the Ryll-Nardzewski to hold but rather the ($\bz_2$-graded) tensor structure of the CAR algebra. Indeed, the Ryll-Nardzewski theorem will hold true in any infinite tensor product, and, more in general, in any $\bz_2$-graded tensor product of an assigned unital $C^*$-algebra. Furthermore, exchangeable states on tensor products are entirely known since they were completely classified in the influential work of St\o rmer \cite{Sto},
where it was shown they make up a Bauer simplex with boundary given by all infinite products of a single state. Note that the classical case corresponds to starting with a commutative $C^*$-algebra.\\
Interestingly, all the specific non-commutative models we mentioned above are quotients of the infinite free product
of a given $C^*$-algebra, which we refer to as the sample algebra, with itself countably many times.
A natural question we  answer in this paper is to determine to what extent the Ryll-Nardzewski
theorem continues to hold on more general quotient than tensor products. To do so, we first establish our setting
by considering quantum stochastic processes whose distribution, which, as we will recall at length, is a state on the infinite free product of the sample $C^*$-algebra,  factorizes through a quotient of the free product.
The main result we obtain in this direction is Theorem \ref{mainRyll}, where we prove that exchangeability follows from spreadability provided that the quotient $\ast_\bn\ga/I$ is by an ideal $I$ that, first, is invariant
under the natural action (on indices) of permutations on the free product $\ast_\bn\ga$, and, second, dials down
the too high degree of non-commutativity of the free product, in the sense that the quotient is linearly
generated, as a Banach space, by {\it ordered} monomials, {\it i.e.} elements of the type
$p_I((i_{j_1}(a_1)i_{j_2}(a_2)\cdots i_{j_n}(a_n))$ with $j_1<j_2<\ldots< j_n$ (or the other way round), where $p_I$ is the canonical
projection onto the quotient and the $i_j$'s are the embeddings of $\ga$ into $\ast_\bn\ga$.
It is worth noting that our non-commutative version of the Ryll-Nardzewski
theorem  is general enough to apply to the above mentioned models as well as twisted graded tensor products.\\
Twisted tensor products of $G$-graded $C^*$-algebras, with $G$ a compact abelian group,
are relevant instances of such quotients which generalize usual tensor products by introducing
commutation rules between tensors localized in disjoint sites depending, through a bicharacter $v$ of the dual group $\widehat{G}$, on the
grade of the tensors. The dependence is such that two tensors $a, b$ localized in disjoint sites commute if and only if the bicharacter $v$ is trivial on the corresponding grades, namely if $v(\partial a, \partial b)=1$.\\
A stark difference with usual tensor products, though, is that the commutation rules alluded to  may and in general will forbid permutations from acting naturally on twisted products, unless the bicharacter is antisymmetric. Even so,  displacement of indices (those keeping track of the localization of tensors) by increasing maps is always compatible with the commutation rules, which makes it possible to consider spreadability in all cases.\\
Now, a remarkable construction that naturally comes along with tensor products is of course that of product states.
This construction carries over to the more general context of twisted products as long as more care is taken.
In fact, it was already known, see {\it e.g.} \cite{FidVic},  that the  infinite product state of a single given state $\om$ on a $G$-graded
$C^*$-algebra $\ga$ exists as a state, namely as a normalized {\it positive} linear functional, on the infinite twisted tensor product $\otimes_v^\bn\ga$, if the state $\om$ is $G$-invariant. However, what we show in the present paper is that
it is possible to go well beyond $G$-invariance.  Precisely, in Theorem \ref{existenceprod} we prove that a necessary and sufficient condition for the product state to exist is that $\om$ is compatible with the grading in the following sense: $v(\eta, \chi)=1$ if $\eta, \chi\in\widehat{G}$ sits in what we call the spectral support, ${\rm supp}_G(\om)$, of the state $\om$. This is by definition the set of characters $\chi$ in $\widehat{G}$ for which there exists
$a$ in $\ga$ with $\partial a=\chi$ and $\om(a)\neq 0$.\\
One of the reasons that much importance is attached to product states in the paper is  they spawn the
possibly simplest examples,  at the level of free products, of states that fail to be exchangeable while being spreadable, Proposition \ref{counterexamples}. Another is they yield an example of a process that is spreadable but not  braidable in a natural sense in our framework, Theorem \ref{counterexamplebraid}. This can be seen as a first step towards  the answer to the question as to whether there exists a spreadable process which is not braidable
raised by Evans, Gohm, and Köstler in \cite{EGK17}, after the last two authors, working in the framework of von Neumann algebras,
introduced in \cite{GK08}  a new  symmetry, braidability, which is intermediate between exchangeability and spreadability.\\ 
We then move on to study spreadability on twisted tensor products. Since the action of monotone maps is only implemented by
(injective) endomorphisms, in order to make non-commutative ergodic theory applicable, we employ a thickening (of the indices labelling the infinite product)
procedure first introduced in \cite{CDRCAR} and resorted to in \cite{CDGR}, which allows us to get an automorphic action on a larger $C^*$-algebra of a group $H$ containing the original semigroup of increasing maps. 
The $H$-abelianness of the enlarged system is  analyzed
thoroughly in Theorem \ref{suffcondabelianness}, which provides a useful, general sufficient condition, and Theorem \ref{charHabelianness}, which gives a necessary and sufficient condition. Working under the sufficient condition, 
in Theorem \ref{maindefinetti} we give our version of de Finetti's theorem on infinite twisted tensor products showing that
the compact convex set of all spreadable states on such chains is the Bauer simplex whose boundary is exactly the compact set of all product states. Corollary \ref{definettiprocesses} states the corresponding result for quantum stochastic processes whose distribution factorizes through any of our infinite twisted tensor products.
Interestingly, these hypotheses are satisfied in the models of parafermion algebras considered in
\cite{BJLW}. These algebras are acted upon by the braid group in a natural way.
In Proposition \ref{finprod} we show that the sufficient condition for $H$-abelianness to hold
covers a variety of situations, such as symmetric bicharacters on finite direct product groups of the type
 $\bz_{n_1}\times....\times\bz_{n_k}$, with $n_1, \ldots, n_k$ being odd. These lead to what we deem is a first example of a non-commutative model where not all spreadable states can be obtained out of a single tensor product subalgebra by composition with the canonical conditional expectation onto it, Example \ref{definettifoliated}.\\
Finally, stationary states on infinite twisted products are addressed as well.  In Theorem \ref{sufficientPoulsen}
we give a sufficient condition for the convex compact set of stationary states to be (affinely isomorphic with)
the Poulsen simplex, which is also necessary when the given grading on the sample algebra is ergodic.\\
As an outlook for future developments of the present work, we would like to point out that the thickening procedure provides a way to associate a unitary representation of the Thompson group to any spreadable quantum stochastic process (see Remark \ref{representations}).
The representations thus obtained might be worth comparing with those constructed in \cite{KK, KKW}.\\
In an effort to keep the exposition as self-contained as possible, we have gleaned in the preliminary section the basic definitions and results concerning quantum stochastic processes, the ergodic theory of $C^*$-dynamical systems, and a quick account on twisted tensor products of $G$-graded $C^*$-algebras.

\section{Preliminaries}
\subsection{Quantum stochastic processes}
In this paper, $\bn$ is the set of all natural numbers, including $0$. Quantum probability is here considered in its $C^*$-algebraic setting, thus a discrete  quantum stochastic process  will be
 a quadruple
$\big(\ga,\ch,\{\iota_j\}_{j\in \bn},\xi\big)$, where $\ga$ is a (unital) $C^{*}$-algebra, referred to as the sample
algebra of the process, $\ch$ is a Hilbert space,
whose inner product is denoted by $\langle\cdot,\cdot \rangle$, and is taken linear to the left and anti-linear to the right,
the maps $\iota_j$ are (unital) $*$-representation of $\ga$ on $\ch$, and
$\xi\in\ch$ is a unit vector, which is cyclic for  the von Neumann algebra
$\bigvee_{j\in \bn}\iota_j(\ga)$.\\
The above definition of a quantum stochastic process is  as general as to include
all classical cases as realized in the Kolmogorov consistency theorem. Indeed, suppose one is given a  family of finite-dimensional 
 distributions $\mu_{j_1, j_2, \ldots, j_k}\in P(\br^k)$, for any $k\in\bn$ and indices
$j_1, j_2, ..., j_k\in \bn$.
Consider the Tychonoff product $\br^\bn$, whose points are denoted by
$x=(x_j)_{j\in \bn}$. For any $k, j_1, j_2, \ldots, j_k\in \bn$, and $A_1, A_2, \ldots, A_k$ Borel subsets of $\br$, we define
a corresponding cylinder of $\br^\bn$ as
$$C_{j_1, j_2, \ldots, j_k}^{A_1, A_2, \ldots, A_k}:=\big\{x\in \br^\bn \mid x_{j_1}\in A_1, x_{j_2}\in A_2, \ldots, x_{j_k}\in A_k\big\}.$$
As is known, if the finite-dimensional distributions satisfy the so-called consistency conditions of
Kolmogorov's theorem, then a measure $\mu$ can be defined on the $\sigma$-algebra
$\mathfrak{C}$
generated by cylinders by 
\begin{equation*}\label{cylmeasure}
\mu\big(C_{j_1, j_2, \ldots, j_k}^{A_1, A_2, \ldots, A_k}\big)= \mu_{j_1, j_2, \ldots, j_k}
(A_1\times A_2\times\ldots\times A_k).
\end{equation*}
Denote by $X_j$ the $j$-th coordinate function on $\br^\bn$, namely
$X_j(x)=x_j$, $x\in \br^\bn$.
The functions $\{X_j\}_{j\in \bn}$ provide a stochastic process on the probability space
$(\br^\bn, \mathfrak{C}, \mu)$ having $\{\mu_{j_1, j_2, \ldots, j_k}\}$ as finite-dimensional distributions.\\
The above construction can also be reinterpreted in terms of representations of a given sample algebra.
To this end, take $\ga=C_0(\br)$.
With $\ch:=L^2(\br^\bn, \mu)$, we consider a  family of $*$-homomorphisms of $\ga$ 
to $\mathcal{B}(\ch)$ defined as
$$
\iota_j(f):= f(X_j), \,  \quad f\in C_0(\br)\,,
$$
where $f(X_j)$ is the bounded operator acting on $L^2(\br^\bn, \mu)$  by multiplication
by the function $f(X_j)$.\\
Denoting by $1\in L^2(\br^\bn, \mu)$ the function  equal to 
$1$ $\mu$-a.e., we obviously have that $(C_0(\br), L^2(\br^\bn, \mu), \{\iota_j\}_{j\in \bn}, 1)$ is a stochastic process and
corresponds to the realization of the process provided by  Kolmogorov's theorem
through the coordinate functions $X_j$.\\
Conversely, a process is classical exactly when both the sample algebra
$\ga$ and $\bigvee_{j\in \bn}\iota_j(\ga)$ are commutative, in that one has the following, see {\it e.g.} the appendix of \cite{CR}.
\begin{prop}
Let $(C_0(\br), \ch, \{\iota_j\}_{j\in\bn}, \xi)$ be
a stochastic process such that the von Neumann algebra $\bigvee_{j\in\bn}\iota_j (C_0(\br))$ is commutative.
Then there exists a commuting family $\{A_j: j\in\bn\}$ of
(possibly unbounded) self-adjoint operators on $\ch$ such that for every $j\in\bn$ one has
$\iota_j(f)=f(A_j)$, $f\in C_0(\br)$.\\
In addition there exist a probability measure $\mu$ on $(\br^\bn, \mathfrak{C})$ and a unitary $U: \ch\rightarrow L^2(\br^\bn, \mu)$ such that:
\begin{enumerate}
\item $U\xi=[1]_\mu$
\item$UA_jU^*=M_j$, for every $j\in\bn$, where $M_j$ is the operator acting on $L^2(\br^\bn, \mu)$
as the multiplication by $X_j$.
\end{enumerate}
\end{prop}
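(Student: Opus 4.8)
The plan is to reconstruct the classical picture from the abstract one in three stages: first produce the generators $A_j$ as self-adjoint operators, then organise their joint spectral data on $\br^\bn$, and finally intertwine everything onto $L^2(\br^\bn,\mu)$ by exploiting cyclicity. To begin, I would treat each index $j$ separately. Since $\iota_j$ is a $*$-representation of $C_0(\br)$ on $\ch$, the spectral theorem furnishes a (regular, projection-valued) spectral measure $E_j$ on the Borel sets of $\br$ with values in $\mathcal B(\ch)$ such that $\iota_j(f)=\int_\br f\,\di E_j$ for every $f\in C_0(\br)$; setting $A_j:=\int_\br\lambda\,\di E_j(\lambda)$ produces a self-adjoint (in general unbounded) operator with $\iota_j(f)=f(A_j)$. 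The only point requiring a word of care is that $C_0(\br)$ is non-unital, so $E_j$ is recovered by extending $\iota_j$ to the bounded Borel functional calculus in the strong operator topology; this is routine, the essential observation being that each spectral projection $E_j(B)=\chi_B(A_j)$ lies in the von Neumann algebra $\iota_j(C_0(\br))''$, hence in the abelian algebra $M:=\bigvee_{j\in\bn}\iota_j(C_0(\br))$.

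Next I would assemble the joint distribution. Because every $E_j(B)$ belongs to the commutative algebra $M$, the spectral measures $\{E_j\}_{j\in\bn}$ mutually commute, so the operators $A_j$ strongly commute. One then builds a single spectral measure $E$ on the product $\sigma$-algebra $\mathfrak{C}$ of $\br^\bn$ by prescribing it on cylinders as the finite product $E\big(C^{A_1,\ldots,A_k}_{j_1,\ldots,j_k}\big)=E_{j_1}(A_1)\cdots E_{j_k}(A_k)$; commutativity guarantees that this is a well-defined, consistent family of projections, and a Carath\'eodory/Kolmogorov-type extension yields a genuine $\mathcal B(\ch)$-valued measure $E$ on $\mathfrak{C}$ whose one-dimensional marginals are the $E_j$. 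Defining $\mu(S):=\langle E(S)\xi,\xi\rangle$ for $S\in\mathfrak{C}$ then gives a probability measure on $(\br^\bn,\mathfrak{C})$, since $E(\br^\bn)=\idd$ and $\|\xi\|=1$.

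Finally I would produce the unitary. On bounded $\mathfrak{C}$-measurable $g$ set $Vg:=\big(\int_{\br^\bn} g\,\di E\big)\xi$; the identity $\|Vg\|^2=\langle\int|g|^2\,\di E\,\xi,\xi\rangle=\int|g|^2\,\di\mu$ shows that $V$ is $L^2(\mu)$-isometric on the dense subspace $L^\infty(\mu)$, hence extends to an isometry $V:L^2(\br^\bn,\mu)\to\ch$. Its range is closed and contains every vector $\iota_{j_1}(f_1)\cdots\iota_{j_k}(f_k)\xi=V\big(f_1(X_{j_1})\cdots f_k(X_{j_k})\big)$; since $M$ is abelian these monomials span a strongly dense $*$-subalgebra of $M$, so by cyclicity of $\xi$ the range of $V$ exhausts $\ch$ and $V$ is unitary. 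Putting $U:=V^{-1}$, one gets $U\xi=[1]_\mu$ from $V(1)=E(\br^\bn)\xi=\xi$, which is (1); and from $\iota_j(f)Vg=(\int f(X_j)\,\di E)(\int g\,\di E)\xi=V(f(X_j)g)$ one reads off that $U\iota_j(f)U^*$ is the operator of multiplication by $f\circ X_j$ for every $f\in C_0(\br)$. As $A_j$ and $M_j$ are self-adjoint and completely determined by their bounded functional calculi, equality of these calculi forces $UA_jU^*=M_j$, establishing (2).

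The step I expect to demand the most attention is the passage from the mutually commuting one-dimensional spectral measures to the single measure $E$ on $\br^\bn$, together with the surjectivity of $V$: it is precisely here that both hypotheses are indispensable, namely commutativity of $M$ (so that the product spectral measure exists and the generated $*$-algebra consists only of the \emph{ordered} monomials that manifestly land in the range of $V$) and cyclicity of $\xi$ (so that this range fills all of $\ch$). Everything else is standard spectral theory.
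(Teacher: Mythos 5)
Your proposal is correct. A point of comparison is slightly moot here: the paper does not actually prove this proposition, but instead defers to the appendix of \cite{CR}; your argument is the standard spectral--theoretic one that such a reference contains (one--dimensional spectral measures $E_j$ from each representation, a joint projection--valued measure on the product $\sigma$-algebra, and the cyclicity of $\xi$ to turn the natural isometry $L^2(\mu)\to\ch$ into a unitary), so there is nothing to object to in the overall route.

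Two small points are worth making explicit if this were written out in full. First, the identity $\iota_j(f)=f(A_j)$ and the normalization $E_j(\br)=\idd$ (which you use both for $E(\br^\bn)=\idd$, hence $\mu(\br^\bn)=1$, and for $V(1)=\xi$) require each $\iota_j$ to be nondegenerate; this is implicit in the paper's convention that the $\iota_j$ are ``unital'' representations, but since $C_0(\br)$ has no unit it should be stated as nondegeneracy. Second, the passage from the commuting family $\{E_j\}_{j\in\bn}$ to a single projection-valued measure $E$ on $(\br^\bn,\mathfrak{C})$ is a genuine theorem (a Kolmogorov extension for projection-valued measures, valid here because $\br$ is standard Borel and, as follows from cyclicity of $\xi$ over a countably generated algebra, $\ch$ is separable); you correctly isolate it as the delicate step, and it is indeed standard, with the usual proof polarizing the consistent scalar measures $\langle E_{j_1}(\cdot)\cdots E_{j_k}(\cdot)v,w\rangle$ and verifying multiplicativity on a monotone class. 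An alternative, equally classical route avoiding this extension is Gelfand theory: the norm-separable abelian $C^*$-algebra generated by $\bigcup_j\iota_j(C_0(\br))$ is some $C_0(\Omega)$, the vector state of $\xi$ gives a measure on $\Omega$, and the coordinate evaluations push it forward to $\br^\bn$; both approaches yield the same $\mu$ and $U$.
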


When the sample algebra $\ga$ is unital, as will be in this paper, 
the assignment of a quadruple $\big(\ga,\ch,\{\iota_j\}_{j\in \bn },\xi\big)$ amounts to a state $\varphi$ on
the free product $C^*$-algebra $\ast_{\bn} \ga$, which is the distribution of the whole process.
We recall that  $\ast_{\bn} \ga$ is the $C^*$-algebra determined by the following  universal property: there is
a family $\{i_j: j\in \bn \}$ of $*$-homomorphisms $i_j:\ga\rightarrow\ast_{\bn} \ga$ such that if
$\cb$ is a second $C^*$-algebra with a family of  $*$-homomorphisms $\{i_j': j\in \bn \}$ from $\ga$ to $\cb$, then there exists a unique $*$-epimorphism $\Phi:\ast_{\bn} \ga\rightarrow\cb$ with
$i_j'=i_j\circ\Phi$. Informally, $\ast_{\bn} \ga$ is the maximal completion of the $*$-algebra linearly spanned by all words of the form $a_{j_1}a_{j_2}\cdots a_{j_n}$ with $n$ in $\bn$, $j_1, \ldots, j_n\in \bn $,
$j_1\neq j_2\neq\ldots\neq j_n$, and $a_{j_1}, a_{j_2}, \ldots, a_{j_n}$ in $\ga$.
The universal property implies in particular that any function $f: \bn\rightarrow \bn$ can be lifted to a
$*$-endomorphism $\Phi_f$ of  $\ast_{\bn} \ga$ uniquely determined by $\Phi_f(i_j(a))=i_{f(j)}(a)$, for all
$j$ in $\bn$ and $a$ in $\ga$.\\
For more on free products of $C^*$-algebras the reader is referred to e.g.
\cite{VDN92}. For completeness, we quickly recall how the correspondence between stochastic
processes and states on the free product $C^*$-algebra works. If
one starts
with a stochastic process, then a state
$\varphi$ on the free product $\ast_{\bn} \ga$ can be defined by setting
\begin{equation}\label{corresp}
\varphi(i_{j_1}(a_1)i_{j_2}(a_2)\cdots i_{j_n}(a_n)):=\langle(\iota_{j_1}(a_1)\iota_{j_2}(a_2)\cdots\iota_{j_n}(a_n)\xi, \xi\rangle
\end{equation}
for all $n\in\bn$, $j_1\neq j_2\neq\cdots\neq j_n\in \bn $ and
$a_1, a_2, \ldots, a_n\in\ga$, where $i_j:\ga\rightarrow \ast_{\bn}\ga$ is
the $j$-th embedding of $\ga$ into $\ast_{\bn} \ga$.
The values of $\varphi$ on monomials of the type above are
often referred to as finite-dimensional distributions of the process itself.\\
Conversely, all states on the free product $\ast_{\bn} \ga$ are easily seen to arise in this way.
Phrased differently, starting now with a state $\f\in\cs\big(\ast_{\bn}\ga\big)$,  a stochastic process can be defined by using  the GNS representation $(\p_\f, \ch_\f, \xi_\f)$ of
$\f$. Indeed, for every $j\in \bn $ we can set $\iota_j(a):=\pi_\f(i_j(a))$, $a\in\ga$, so as to get the quadruple
$\big(\ga,\ch_\f,\{\iota_j\}_{j\in \bn },\xi_\f\big)$.

\subsection{Spreadability and exchangeability}
We denote by $\bp_\bn$ the group of  bijective maps of $\bn$ which only move finitely many naturals.  The group operation
is given by the map composition.\\
A quantum stochastic process $\big(\ga,\ch,\{\iota_j\}_{j\in \bn },\xi\big)$ is said exchangeable
if its finite-dimensional distributions are invariant under permutations, namely if for all $\sigma$ in $\bp_\bn$ one has
$$\langle(\iota_{\s(j_1)}(a_1)\iota_{\s(j_2)}(a_2)\cdots\iota_{\s(j_n)}(a_n)\xi, \xi\rangle=\langle(\iota_{j_1}(a_1)\iota_{j_2}(a_2)\cdots\iota_{j_n}(a_n)\xi, \xi\rangle\,,$$
for all $j_1\neq j_2\neq\ldots\neq j_n$ and $a_1, a_2,\ldots, a_n$ in $\ga$.\\
This invariance property may of course be phrased in terms of the corresponding state $\varphi$
on $\ast_\bn\ga$.
Indeed, if $\a: \bp_\bn\rightarrow{\rm Aut}(\ast_{\bn} \ga)$ is the natural action of $\bp_\bn$ 
on $\ast_{\bn} \ga$, then the process is exchangeable if and only if $\varphi\circ\a_\s=\varphi$, for all
$\s$ in $\bp_\bn$.\\
Classically, a sequence of random variables is spreadable if all of its subsequences have the same joint
distribution. Thus, a quantum stochastic process is spreadable if, for every monotone $f:\bn\rightarrow\bn$, one has
$$\langle(\iota_{f(j_1)}(a_1)\iota_{f(j_2)}(a_2)\cdots\iota_{f(j_n)}(a_n)\xi, \xi\rangle=\langle(\iota_{j_1}(a_1)\iota_{j_2}(a_2)\cdots\iota_{j_n}(a_n)\xi, \xi\rangle\,,$$
for all $j_1\neq j_2\neq\ldots\neq j_n$ and $a_1, a_2,\ldots, a_n$ in $\ga$.\\
Spreadability is a symmetry which can be implemented by the natural action 
of the unital semigroup of all strictly increasing maps of $\bn$ to itself by endomorphisms of the free product
$\ast_\bn\ga$. Since distributional symmetries
only involve finitely many indices at a time, a process is spreadable if and only if its distribution is invariant under the action of the smaller semigroup
$\bj_\bn$ made up of all
maps $f$ of $\bn$ to itself whose range is cofinite, that is $|\bn\setminus f(\bn)|<\infty$, where for any
set $E$, $|E|$ denotes the cardinality of $E$.
We recall that $\bj_\bn$ is countably generated. Precisely, for every fixed $h\in\bn$, define the $h$-{right hand-side partial shift} as the element $\theta_h$ of $\bj_\bn$ given by
$$
\theta_h(k):=\left\{\begin{array}{ll}
                      k & \text{if}\,\, k<h\,, \\
                      k+1 & \text{if}\,\, k\geq h\,.
                    \end{array}
                    \right.
$$
It is easy to see that  the set $\{\theta_h : h\in\bn\}$ generates
$\bj_\bn$ as a unital semigroup.\\
Analogously, $\bj_\bz$ is the semigroup of increasing maps of $\bz$ into itself with cofinite range.
 $\bj_\bz$ is finitely generated as well, with a set of generators being $\{\tau, \tau^{-1}, \theta_0\}$, with
$\tau$ being given by $\tau(k)=k+1$, $k$ in $\bz$, and
$\theta_0$ being understood as an increasing map of $\bz$ to itself defined by the same formula as above. 
\begin{rem}
In this paper we have adopted  $\bn$ as the index set for  our quantum stochastic processes.
However, all the results hold with $\bz$ as well. 
\end{rem}

\subsection{$C^*$-dynamical systems} A $C^*$-dynamical system is a triple $(\ga, H, \a)$, where $\ga$ is a (unital)
$C^*$-algebra, $H$ a locally compact ({\it e.g.} discrete) group, and
$\a: H\rightarrow{\rm Aut}(\ga)$ a group homomorphism, that is
$\a_{hk}=\a_h\circ\a_k$, for all $h, k$ in $H$.\\
Given a $C^*$-dynamical system $(\ga, H, \a)$, we denote by $\cs^H(\ga)$ the compact convex set of its invariant states, that is
 $$\cs^H(\ga)
 :=
 \{\varphi\in\cs(\ga): \varphi\circ\a_h=\varphi,\textrm{for all}\,\, h\in H\}\,.$$
Extreme states, namely extreme points of the compact convex set $\cs^H(\ga)$ are often referred to as the ergodic
states of the system, and denoted by ${\rm Extr}(\cs^H(\ga))$.\\
On the GNS representation $(\pi_\varphi, \ch_\varphi, \xi_\varphi)$ of any invariant state $\varphi$, the action of the group is unitarily implemented. Precisely, 
$$u_h^\varphi\pi_\varphi(a)\xi_\varphi:=\pi_\varphi(\a_h(a))\xi_\varphi\,, \,\,a\in\ga\,,$$
defines a unitary operator  on $\ch_\varphi$ such that $u_h^\varphi \pi_\varphi(a) (u_h^\varphi)^*=\pi_\varphi(\a_h(a))$, $a$ in $\ga$. \\
Denote by $\ch_\varphi^H$ the closed subspace of all
invariant vectors for the group of unitaries $\{u_h^\varphi: h\in H\}$, that is
$\ch_\varphi^H := \{\xi\in\ch_\varphi: u_h^\varphi\xi=\xi, h\in H\}$.  Note that
$\bc\xi_\varphi\subset\ch_\varphi^H$.
The orthogonal
projection of $\ch_\varphi$ onto $\ch_\varphi^H$ will be denoted by $E_\varphi$.\\
To keep the exposition of the present paper as self-contained as possible, we collect some known general results on
$C^*$-dynamical systems below. Before doing this, we also recall what $H$-abelianness of a system is.
A $C^*$-dynamical system is $H$-abelian if, for every $\varphi$ in $\cs^H(\ga)$, the  subspace
$E_\varphi\pi_\varphi(\ga)E_\varphi\subset \cb(\ch_\varphi)$ is commutative.
\begin{lem}\label{summary}
Given a $C^*$-dynamical system $(\ga, H, \a)$, and $\varphi$ in $\cs^H(\ga)$, consider the following
\begin{itemize}
\item [(i)] $\varphi$ is ergodic;
\item [(ii)] $\{\pi_\varphi(\ga), 	\{u_h^\varphi: h\in H\}\}'=\bc 1_{\ch_\varphi}$\;
\item [(iii)] $E_\varphi$ is the orthogonal projection onto $\bc\xi_\varphi$.
\end{itemize}
Then $(i)$ and $(ii)$ are equivalent, and $(iii)$ implies both. In addition, the system is
$H$-abelian if and only if the three conditions are all equivalent for all $\varphi$ in $\cs^H(\ga)$.
Finally, $\cs^H(\ga)$ is a Choquet simplex if and only if
the system is $H$-abelian.
\end{lem}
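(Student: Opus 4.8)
The plan is to dispatch the two pointwise implications first, then isolate a single compression identity that drives both the $H$-abelianness characterization and the simplex statement. Throughout I fix an invariant $\varphi$, write $\gam_\varphi:=\{\pi_\varphi(\ga),u_h^\varphi:h\in H\}''$, and record the mean ergodic facts I will use repeatedly: by the mean ergodic theorem $E_\varphi$ is a weak limit of averages of the $u_h^\varphi$, so $E_\varphi\in\gam_\varphi$; moreover $u_h^\varphi E_\varphi=E_\varphi=E_\varphi u_h^\varphi$ and $E_\varphi\xi_\varphi=\xi_\varphi$. For $(iii)\Rightarrow(ii)$ I would take a positive contraction $T\in\gam_\varphi'$; since $T$ commutes with every $u_h^\varphi$, the vector $T\xi_\varphi$ is invariant, so $(iii)$ forces $T\xi_\varphi\in\bc\xi_\varphi$, say $T\xi_\varphi=c\xi_\varphi$, and then $T\pi_\varphi(a)\xi_\varphi=\pi_\varphi(a)T\xi_\varphi=c\,\pi_\varphi(a)\xi_\varphi$ with $\xi_\varphi$ cyclic gives $T=c\,1$, i.e. $\gam_\varphi'=\bc 1$. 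For $(i)\Leftrightarrow(ii)$ I would use the Radon--Nikodym correspondence: an invariant $\psi\le\varphi$ has the form $\psi(\cdot)=\langle\pi_\varphi(\cdot)T\xi_\varphi,\xi_\varphi\rangle$ for a unique positive contraction $T\in\pi_\varphi(\ga)'$, and invariance forces $T\xi_\varphi\in\ch_\varphi^H$. Since $\xi_\varphi$ is separating for $\pi_\varphi(\ga)'$, comparing $u_h^{\varphi\,*}Tu_h^\varphi$ with $T$ on $\xi_\varphi$ upgrades this to $T\in\{u_h^\varphi\}'$, whence $T\in\gam_\varphi'$. Thus the invariant functionals dominated by $\varphi$ are order-isomorphic to the positive part of the unit ball of $\gam_\varphi'$, and extremality of $\varphi$ is exactly triviality of that interval, i.e. $\gam_\varphi'=\bc1$.

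The linking device is the compression identity $E_\varphi\gam_\varphi E_\varphi|_{\ch_\varphi^H}=\big(E_\varphi\pi_\varphi(\ga)E_\varphi|_{\ch_\varphi^H}\big)''$. To prove it I would reduce an arbitrary word $E_\varphi\,\pi_\varphi(a_0)u_{h_1}^\varphi\pi_\varphi(a_1)\cdots u_{h_n}^\varphi\pi_\varphi(a_n)\,E_\varphi$ by pushing all unitaries to the right via $u_h^\varphi\pi_\varphi(b)=\pi_\varphi(\a_h(b))u_h^\varphi$ and then absorbing them against $E_\varphi$ using $u_h^\varphi E_\varphi=E_\varphi$, collapsing the word to $E_\varphi\pi_\varphi(c)E_\varphi$; passing to weak closures yields the identity. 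Since $E_\varphi\in\gam_\varphi$, the reduction theory of von Neumann algebras then gives that the commutant of this compressed algebra inside $\cb(\ch_\varphi^H)$ is precisely $\gam_\varphi'|_{\ch_\varphi^H}$. In particular the system is $H$-abelian at $\varphi$ exactly when $\big(\gam_\varphi'|_{\ch_\varphi^H}\big)'$ is commutative.

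From the identity the nontrivial half of the third assertion is immediate. Since $(i)\Leftrightarrow(ii)$ always and $(iii)\Rightarrow(ii)$ always, ``all three equivalent'' reduces to $(ii)\Rightarrow(iii)$. Assuming $H$-abelianness and $(ii)$, we have $\gam_\varphi'=\bc1$, hence $\gam_\varphi'|_{\ch_\varphi^H}=\bc1$, so $E_\varphi\gam_\varphi E_\varphi|_{\ch_\varphi^H}=\cb(\ch_\varphi^H)$; $H$-abelianness forces this to be commutative, whence $\dim\ch_\varphi^H\le 1$ and $\ch_\varphi^H=\bc\xi_\varphi$, which is $(iii)$. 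For the simplex statement I would invoke the standard fact that $\cs^H(\ga)$ is a Choquet simplex iff the cone of invariant positive functionals is a lattice, and that the latter holds iff each order interval $[0,\varphi]$ is a lattice; the order-isomorphism from the first paragraph then recasts this, via Kadison's lattice characterization of commutativity, as a commutativity condition tied through the compression identity to the $E_\varphi\pi_\varphi(\ga)E_\varphi$.

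\textbf{Main obstacle.} The converse direction of the third assertion (deducing $H$-abelianness from the pointwise equivalence of $(i)$--$(iii)$) and the full equivalence with the simplex property are the genuinely hard parts, because they require passing from the commutant picture $\gam_\varphi'$ to the compressed picture $E_\varphi\pi_\varphi(\ga)E_\varphi$ \emph{across all invariant states simultaneously}, rather than state by state. I expect to handle this exactly as in the classical ergodic theory of $C^*$-dynamical systems: realize invariant orthogonal (sub-barycentric) measures with barycenter $\varphi$ as abelian von Neumann subalgebras of $\gam_\varphi'$, identify the ergodic decomposition with a maximal such subalgebra, and use uniqueness of the maximal representing measure to force the lattice structure. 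This orthogonal-measure and Choquet machinery, which is the technical heart, is the classical material I would cite (e.g. Bratteli--Robinson) rather than reprove in full.
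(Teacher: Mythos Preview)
The paper does not prove this lemma at all: its entire ``proof'' is the sentence ``For the proof, see \cite[Propositions 3.1.10, 3.1.12, 3.1.14]{S} and \cite[Corollary 4.4]{B}.'' In other words, the authors treat Lemma~\ref{summary} as background material imported from Sakai and from Batty, and make no attempt at an argument of their own.

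Your proposal therefore goes considerably further than the paper. The sketch you give for $(iii)\Rightarrow(ii)$ and for $(i)\Leftrightarrow(ii)$ via the Radon--Nikodym correspondence between invariant functionals dominated by $\varphi$ and positive contractions in $\gam_\varphi'$ is the standard one (essentially Sakai's 3.1.10), and is correct. Your compression identity $E_\varphi\gam_\varphi E_\varphi|_{\ch_\varphi^H}=(E_\varphi\pi_\varphi(\ga)E_\varphi|_{\ch_\varphi^H})''$ and the derivation of $(ii)\Rightarrow(iii)$ under $H$-abelianness from it are also the right ideas. You correctly flag the genuinely delicate parts --- the converse of the $H$-abelianness characterization and the ``if and only if'' with the Choquet simplex property --- and correctly indicate that these rest on the orthogonal-measure and lattice machinery one cites from Bratteli--Robinson or, as the paper does, from Sakai and Batty. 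One small correction: your invocation of ``the mean ergodic theorem'' to place $E_\varphi$ in $\gam_\varphi$ is not quite right for an arbitrary group $H$; what actually gives $E_\varphi\in\gam_\varphi$ here is Alaoglu--Birkhoff (strong limit of convex combinations of the $u_h^\varphi$), which is exactly what the paper itself uses elsewhere.
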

For the proof, see \cite[Propositions 3.1.10, 3.1.12, 3.1.14]{S} and \cite[Corollary 4.4]{B}.

\subsection {Twisted tensor products of $G$-graded $C^*$-algebras}
Infinite twisted tensor products of graded $C^*$-algebras are the non-commutative models we will be chiefly
concerned with in this paper. To keep the exposition as self-contained as possible, we 
briefly review what twisted tensor  products are.\\
Henceforth, $G$ will always denote a compact abelian group, with (normalized) Haar measure 
$\mu_G$. Its Pontryagin dual will be denoted by $\widehat{G}$.
A $G$-graded $C^*$-algebra is a $C^*$-dynamical system
$(\ga, G, \gamma)$, where $\ga$ is a unital $C^*$-algebra, and
$\gamma: G\rightarrow{\rm Aut}(\ga)$ a group homomorphism, pointwise norm continuous, {\it i.e.}, for
every $a$ in $\ga$, the function $G\ni g\mapsto \gamma_g(a)\in\ga$ is continuous w.r.t the norm topology.
The eigenspace of the action associated with  $\chi$ in $\widehat{G}$ is the (possibly zero) subspace of
$\ga$ given by
$$V_\chi :=\{a\in\ga: \gamma_g(a)=\chi(g)a, \textrm{for all}\, g\in G\}\,.$$
For every $\chi, \eta$ in $\widehat{G}$, one has
$V_\chi V_\eta\subset V_{\chi\eta}$ and $V_{\chi}^*=V_{\chi^{-1}}$.
As is well known, eigenspaces corresponding to different characters are linearly independent and
the algebraic direct sum $\oplus_{\chi\in\widehat{G}} V_\chi$ is a dense $*$-algebra of $\ga$, see {\it e.g.}
\cite[Theorem 2.5]{Salle16}, known as the
algebraic layer of $\ga$, which we denote by $\ga_0$. This is quite a rigid object in so far as  the
$C^*$-maximal seminorm of $\ga_0$ coincides with the given  norm of $\ga$, \cite[Theorem 4.1]{FidVic}.\\
An element $a$ in $\ga$ is said to be homogeneous if it belongs to  $V_\chi$ for some $\chi$ in $\widehat{G}$, which is the degree of $a$ and is denoted by $\partial a$.\\
In general, the subspaces $V_\chi$ may well be zero for some characters $\chi$. However, if the action is faithful, that is $G\ni g\mapsto\gamma_g\in {\rm Aut}(\ga)$ is an injective homomorphism, and topologically ergodic, that is
$\ga^G :=\{a\in\ga: \gamma_g(a)=a, \textrm{for all}\, g\in G\}=\mathbb{C}$, then none of the $V_\chi$'s can be zero. This should be a known fact. Nevertheless, we include a proof for want of a reference.
\begin{prop}\label{fullspectrum}
If the $C^*$-dynamical systems $(\ga, G, \gamma)$ is faithful and topologically ergodic, then for any
$\chi$ in $\widehat{G}$ one has $V_\chi\neq \{0\}$.
\end{prop}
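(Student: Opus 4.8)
The plan is to study the set of characters that actually occur, namely the \emph{spectrum}
$$S:=\{\chi\in\widehat{G}: V_\chi\neq\{0\}\},$$
and to show that it is a subgroup of $\widehat{G}$ which, by faithfulness, has to be all of $\widehat{G}$. The whole argument rests on using topological ergodicity to pin down the structure of the nonzero homogeneous elements.

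First I would show that every nonzero homogeneous element is, up to scaling, a unitary. If $0\neq a\in V_\chi$, then since each value $\chi(g)$ lies on the unit circle one computes $\gamma_g(a^*a)=\overline{\chi(g)}\chi(g)\,a^*a=a^*a$ and likewise $\gamma_g(aa^*)=aa^*$, so both $a^*a$ and $aa^*$ belong to $\ga^G=\bc$. Comparing norms gives $a^*a=aa^*=\|a\|^2\,1$, whence $u:=a/\|a\|$ is a unitary lying in $V_\chi$. (The same computation shows $a^*b\in\bc$ for $a,b\in V_\chi$, so in fact $\dim V_\chi\le 1$, although this is not needed.) From here the subgroup property is formal: the trivial character $\chi_0$ lies in $S$ because $1\in V_{\chi_0}=\ga^G$; if $u\in V_\chi$ is a unitary then $u^*\in V_{\chi^{-1}}$ is nonzero, so $\chi^{-1}\in S$; and if $u\in V_\chi$, $w\in V_\eta$ are unitaries, then $uw\in V_\chi V_\eta\subset V_{\chi\eta}$ is again a unitary, hence nonzero, so $\chi\eta\in S$. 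Thus $S\leq\widehat{G}$ is a subgroup and $\ga_0=\oplus_{\chi\in S}V_\chi$.

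To conclude I would invoke Pontryagin duality. Suppose, for contradiction, that $S\neq\widehat{G}$. Since $\widehat{G}$ is discrete, $S$ is a closed subgroup, and its annihilator $S^{\perp}:=\{g\in G:\chi(g)=1\ \text{for all}\ \chi\in S\}$ satisfies $(S^{\perp})^{\perp}=S$; if $S^{\perp}$ were trivial we would get $S=\widehat{G}$, so there exists $g_0\in S^{\perp}$ with $g_0\neq e$. Then for every homogeneous $a\in V_\chi$ with $\chi\in S$ we have $\gamma_{g_0}(a)=\chi(g_0)\,a=a$, so $\gamma_{g_0}$ fixes the dense $*$-algebra $\ga_0$ pointwise; being isometric, $\gamma_{g_0}=\id$. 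This contradicts faithfulness of the action, and therefore $S=\widehat{G}$, i.e. $V_\chi\neq\{0\}$ for every $\chi\in\widehat{G}$.

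The one genuinely delicate point is the passage from nonzero homogeneous elements to unitaries: this is exactly where topological ergodicity enters, and without it the product of two nonzero homogeneous elements could a priori vanish, so $S$ would not be closed under multiplication. Once the spectral subspaces are known to be spanned by unitaries, the subgroup structure and the duality argument are routine.
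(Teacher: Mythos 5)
Your proof is correct and follows essentially the same route as the paper's: use topological ergodicity to show each nonzero spectral subspace $V_\chi$ contains a unitary, deduce that the spectrum is a subgroup of $\widehat{G}$, and then apply Pontryagin duality to produce a nontrivial $g_0$ annihilating it, contradicting faithfulness. If anything, your version is slightly more careful than the paper's at one point, since you verify that both $a^*a$ and $aa^*$ are scalars (ruling out a mere isometry) before concluding that $a$ is a multiple of a unitary.
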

\begin{proof}
Set $\sigma(\gamma) := \{\chi\in \widehat{G}: V_\chi\neq \{0\}\}$.
As is known, topological  ergodicity  implies that
$\sigma(\gamma)$ is a subgroup of $\widehat{G}$.\\
 For completeness' sake, we also recall how this can be
seen. For any $\chi$ in $\sigma(\gamma)$, $V_\chi$ is a one-dimensional
subspace generated by a unitary: if $a$ is in $V_\chi$, then $a^*a$ lies in $\ga^G$, which means
$a^*a=\lambda$, for some scalar $\lambda>0$, hence $a$ is a multiple of a unitary; second, if $u, v$ are unitaries $V_\chi$, then $u^*v=z$, for some phase $z$ in $\bt$, that is $v=zu$.
Now, for $\chi, \eta$ in $\sigma(\gamma)$, the spectral  subspace $V_{\chi\eta}$ is  still non-zero as it contains $V_\chi V_\eta$, which is certainly non-zero because the product of unitaries is clearly non-zero being unitary.\\
We are now ready to show that $\sigma(\gamma)$ is the whole dual group $\widehat{G}$. Suppose, on the contrary,
that $\sigma(\gamma)\subsetneq\widehat{G}$.  Since $\sigma(\gamma)$ is a proper (closed) subgroup of
$\widehat{G}$, there must exist $g_0\neq e$ in $G$ such that $\chi(g_0)=1$ for all $\chi$ in $\sigma(\gamma)$.
From the spectral decomposition of $\ga$, we immediately see that
$\gamma_{g_0}={\rm id}_\ga$, hence $\a$ is not an injective homomorphism.
\end{proof}
If $(\ga, G, \gamma)$ is a $G$-graded $C^*$-algebra, we denote
by $\cs^G(\ga)$ the (compact convex) set of $G$-invariant states of $\ga$, that is
$$\cs^G(\ga) :=\{\om\in\cs(\ga): \om\circ\gamma_g=\om, \textrm{for all}\, g\in G\}\,.$$
We recall that a bicharacter on $\widehat{G}$ is a (continuous) function
$v: \widehat{G}\times\widehat{G}\rightarrow \bt$ such that,
for any fixed $\chi$ in $\widehat{G}$, the functions $\widehat{G} \ni \eta\mapsto v(\eta, \chi)\in\bt$ and
$\widehat{G} \ni \eta\mapsto v(\chi, \eta)\in\bt$  are characters of $\widehat{G}$ (and thus elements of $G$ via  Pontryagin duality).
A bicharacter $v$ is said to be symmetric if $v(\chi, \eta)=v(\eta, \chi)$, antisymmetric if
$v(\chi, \eta)= \overline{v(\eta, \chi)}$, for all
$\chi, \eta $ in $\hat{G}$.
If $v$ is a bicharacter on $\widehat{G}$, $v_S(\chi, \eta) := v(\chi, \eta)v(\eta, \chi)$, $\chi, \eta$ in $\widehat{G}$, is still
a bicharacter, known as the symmetrized of $v$.\\
Associated with any bicharacter $v$ there is a subset $\Delta_v\subset\widehat{G}$, defined as
$$\Delta_v := \{\chi\in\widehat{G}: v(\chi, \chi)=1\}\,.$$
Note that $\Delta_v$ is a subgroup if and only if $v_S$ is $1$ on $\Delta_v\times\Delta_v$. In particular,
$\Delta_v$ is a subgroup if $v$ is antisymmetric.
By a slight abuse of terminology, $\Delta_v$ is often referred to as the isotropy subgroup even if
it fails to be a subgroup.\\
Given a $G$-graded $C^*$-algebra $\ga$ and a bicharacter $v$,  we denote by
$\bigotimes_v^\bn\ga$ the infinite twisted (w.r.t. $v$) maximal tensor product of $\ga$ 
countably many  times  with itself.
This is a $C^*$-algebra $\cb$ with a family $\{i_k: k\in\bn\}$ of (injective)
$*$-homorphisms $i_k: \ga\rightarrow \cb$ such that
for every homogeneous $a, b$ in $\ga$, and for every $k<l \in\bn$, one has
\begin{equation}\label{commrules}
i_k(a)i_l(b)=v(\partial a, \partial b)i_l(b)i_k(a)
\end{equation}
which is universal w.r.t. the above property. Phrased differently, if $\cc$ is another
$C^*$-algebra and $\{i_k': k\in\bn\}$ a collection
of $^*$-homorphisms $i_k': \ga\rightarrow\cc$ satisfying \eqref{commrules}, then there must exist
a unique surjective $^*$-morphism $\Psi:\cb\rightarrow\cc$ such that
$i_k'=\Psi\circ i_k$ for all $k$ in $\bn$. The universal property ensures that such a $\cb$ is unique up to
$^*$-isomorphism, and henceforth it will be denoted by $\bigotimes_v^\bn\ga$.
We start by noting that $\bigotimes_v^\bn\ga$ is a quotient of the infinite free product
$\ast_\bn\ga$.
However, in order to show that the embeddings $i_k$ are injective, and thus $\bigotimes_v^\bn\ga$ is non-trivial,
it is convenient to consider an explicit construction of $\bigotimes_v^\bn\ga$, as is done in \cite{FidVic}.
For the reader's convenience, we briefly outline the construction given there.\\
We start by considering a twisted product of  two $G$-graded $C^*$-algebras, $(\ga, G, \gamma)$ and $(\gb, G, \delta)$, $\ga\otimes_v\gb$.
This can be obtained as follows. On the algebraic tensor product $\ga_0\otimes\gb_0$ (here $\ga_0, \gb_0$ are thought of as  vector spaces) of the algebraic layers of $\ga$ and $\gb$ respectively, one may define a $*$-algebra structure by 
\begin{equation}
(a\otimes b)^* :=\overline{v(\partial a, \partial b)}a^*\otimes b^*
\end{equation}
\begin{equation}
(a\otimes b)(c\otimes d) := \overline{v(\partial c, \partial b)}ac\otimes bd
\end{equation}
for homogeneous $a, c$ in $\ga_0$ and $b, d$ in $\gb_0$.\\ 
These operations turn $\ga_0\otimes\gb_0$ into an associative $*$-algebra, \cite[Proposition 5.1]{FidVic}, which we denote by $\ga_0\otimes_v\gb_0$.\\
One can then consider the maximal $C^*$-seminorm  of $\ga_0\otimes_v\gb_0$, which turns out to be
a norm due to the existence of so-called product states, which we recall. Given $\om$ in $\cs(\ga)$ and $\varphi$ in $\cs(\cb)$, with at least one of them being $G$-invariant, then
$$\om\times\varphi\, \left(\sum_{i=1}^n a_i\otimes b_i\right) := \sum_{i=1}^n \om(a_i)\varphi(b_i)$$
defines a normalized positive linear functional on  $\ga_0\otimes_v\gb_0$ (at this level positivity is meant in
an algebraic sense, {\it} i.e.  $\om\times\varphi\,(x^*x)\geq 0$ for all $x$ in $\ga_0\otimes_v\gb_0$).
Then one may consider the GNS representation $\pi_{\om\times\varphi}$ of $\om\times\varphi$, which is bounded
because $\|\pi_{\om\times\varphi}(a\otimes b)\|\leq \|a\|\|b\|$, for all homogeneous $a,b$, thanks to the fact that
the maximal seminorms of $\ga_0$ and $\gb_0$ are the given norm of $\ga$ and $\gb$ respectively. Furthermore, $\om\times\varphi$ is a faithful state
if both $\om$ and $\varphi$ are faithful.
In order to show that the maximal seminorm $\|\cdot\|_{\rm max}$ on  $\ga_0\otimes_v\gb_0$ is actually a norm, it is enough to note that
$$\|x\|_{\rm max}\geq \sup_{\om\in \cs^G(\ga), \varphi\in\cs^G(\gb)} \|\pi_{\om\times \varphi}(x)\|, \, x\in\ga_0\otimes_v\gb_0\,,$$
and $\sup_{\om\in\cs^G(\ga),\varphi\in\cs^G(\gb)}\|\pi_{\om \times \varphi}(x)\|=0$ implies $x=0$, \cite[Proposition 6.3]{FidVic}. Finally, the $C^*$-norm is bounded since $\|\sum_{i=1}^n a_i\otimes b_i\|_{\rm max}\leq \sum_{i=1}^n\|a_i\|\|b_i\|$ for all homogeneous $a_i, b_i$, $i=1, \ldots, n$.
We denote by $\ga\otimes_v\gb$ the completion of $\ga_0\otimes_v\gb_0$ w.r.t. $\|\cdot\|_{\rm max}$.\\
Denote by $i_\ga$ and $i_\gb$ the canonical embeddings of $\ga$ and $\gb$ into $\ga\otimes_v\gb$,  respectively, that is
$i_\ga(a)=a\otimes 1$ $i_\gb(b)=1\otimes b$, $a$ in $\ga$ and $b$ in $\gb$.
By construction, $\ga\otimes_v\gb$ enjoys the universal properties we started our discussion with. 
Now observe that, for every $g$ in $G$, $\gamma_g\otimes \delta_g$ defined on $\ga_0\otimes_v\gb_0$ as
$$\gamma_g\otimes \delta_g\,(\sum_{i=1}^n a_i\otimes b_i) :=\sum_{i=1}^n\gamma_h(a_i)\delta_g(b_i)\,,$$
for homogeneous $a_i, b_i$, is a $*$-isomorphism of $\ga_0\otimes_v\gb_0$, which can be extended to the completion
$\ga\otimes_v\gb$ by maximality of $\|\cdot\|_{\rm max}$. This allows us to conceive of
$\ga\otimes_v\gb$ as a $G$-graded $C^*$-algebra as well.\\
By induction, for ever $n\geq 1$, we can obtain as above the twisted product of $\ga$ with itself
$n$ times, $(\bigotimes_{i=0}^n \ga_i)^v$. Note that, for every $n$, $(\bigotimes_{i=0}^n \ga_i)^v$ 
canonically embeds into $(\bigotimes_{i=0}^{n+1} \ga_i)^v$, which enables us to consider the inductive limit
$C^*$-algebra of the inductive system thus obtained. The inductive limit is an explicit
realization of the universal $C^*$-algebra $\bigotimes_v^\bn\ga$, in which the injectivity of all embeddings
$i_k$, $k$ in $\bn$, is now clear.\\
Finally, we need to establish some terminology. In particular, we need to say what we mean by localized elements of the infinite twisted product $\bigotimes_v^\bn\ga$. We say that  $x$ in  $\bigotimes_v^\bn\ga$ is localized and
its support lies in a finite set $I\subset\bn$ if it belongs to $C^*(i_l(\ga): l\in I)$.  
It is clear that the set of localized elements is a dense
$*$-subalgebra of the infinite twisted product.\\

%

The action of $\bj_\bn$ by endomorphisms on $\ast_\bn\ga$ passes to
$\bigotimes_v^\bn\ga$ because the defining commutation rules \eqref{commrules} are clearly
preserved by all monotone maps.
Unlike $\bj_\bn$, the action of $\bp_\bn$ on the free product does not always  pass to its quotient 
$\bigotimes_v^\bn\ga$. However, if the bicharacter $v$ is antisymmetric, then the action does pass to
the twisted product, as can  be easily checked.\\
Note that (maximal) infinite tensor products of a given $C^*$-algebra, the CAR algebra, and the infinite non-commutative torus are all instances of graded tensor products. The CAR algebra is isomorphic with  the infinite $\bz_2$-graded product of $\mathbb{M}_2(\bc)$, where the grading  is induced by the involutive automorphism $\theta$ given by $\theta(A)=UAU^*$, $A$ in $\mathbb{M}_2(\bc)$, with $U := \left( \begin{array}{cc} 1 &0\\ 0 &-1\\ \end{array} \right)$ and the bicharacter $v$ is given by $v(\chi, \eta) := (-1)^{\chi\eta}$, $\chi, \eta\in\bz_2$. The non-commutative infinite torus with deformation parameter $\a\in\br$ can be obtained as the infinite $\bt$-twisted product of $C(\bt)$ with itself, where $\bt$ acts on $C(\bt)$ by rotation and the bicharacter is $v_\a(k, l) := e^{2\pi i\a kl}$, $k, l$ in $\bz$.


%
%
%
%

\section{Non-commutative Ryll-Nardzewski theorem}
\subsection{A general Ryll-Nardzewski theorem}
In many of the known examples of a quantum stochastic process $\big(\ga,\ch,\{\iota_j\}_{j\in \bn},\xi\big)$,
the variables $\iota_j$ may satisfy  commutation rules. To take but one example,  $\iota_j(\ga)$ and $\iota_k(\ga)$
might commute  for all $j\neq k$. Having commutation rules for the $\iota_j$'s entails considering a (closed) two-sided ideal $I$ of $\ast_\bn\ga$ (in the above example, $I$ is the ideal generated by all commutators
$[i_j(a), i_k(b)]$, $j, k$ in $\bn$ and $a, b$ in $\ga$).\\
 In such a situation, the state $\varphi$ on 
$\ast_\bn\ga$ associated with the process by \eqref{corresp}, namely
\begin{equation*}
\varphi(i_{j_1}(a_1)i_{j_2}(a_2)\cdots i_{j_n}(a_n)):=\langle(\iota_{j_1}(a_1)\iota_{j_2}(a_2)\cdots\iota_{j_n}(a_n)\xi, \xi\rangle
\end{equation*}
for all $n\in\bn$, $j_1\neq j_2\neq\cdots\neq j_n\in \bn$ and
$a_1, a_2, \ldots, a_n\in\ga$, can also be seen as a state of the quotient algebra
$\ast_\bn\ga/ I$, that is there exists a state $\widetilde{\varphi}$
on $\ast_\bn\ga/ I$ such that $\varphi=\widetilde{\varphi}\circ p_I$, where
$p_I: \ast_\bn\ga\rightarrow \ast_\bn\ga/ I$ is the canonical
projection. 
\begin{defin}\label{distri}
We say that the distribution $\varphi$ of a quantum stochastic process $\big(\ga,\ch,\{\iota_j\}_{j\in \bn},\xi\big)$ 
 factorizes through the quotient $\ast_\bn\ga/I$, for some two-sided ideal $I$,  if
$\varphi=\widetilde{\varphi}\circ p_I$ for some state $\widetilde{\varphi}$ on $\ast_\bn\ga/I$, with
$p_I: \ast_\bn\ga\rightarrow \ast_\bn\ga/ I$ being the canonical
projection. 
\end{defin}

The following result provides a satisfactory framework where a general Ryll-Nardzewski holds.
Among others, this framework hosts the classical case, processes with distribution factorizing through tensor products, twisted tensor products by antisymmetric bicharacters, and thus in particular the CAR algebra, and so-called $q$-deformed processes.\\
In the sequel, by an ordered monomial in a quotient $\ast_\bn\ga/ I$ we will
mean an element of the form $p_I(i_{j_1}(a_1)\cdots i_{j_n}(a_n))$, for $n\in\bn$ and  $j_1<\ldots< j_n$  or $j_1>\ldots> j_n$ for $a_i$ in $\ga$, $i=1, \ldots, n$.
\begin{thm}\label{mainRyll}
Let $I$ be a closed two-sided ideal of the infinite free product $\ast_\bn\ga$ such that:
\begin{itemize}
\item [(i)] $I$ is invariant under the action of $\bp_\bn$;
\item [(ii)] the closure of the linear span of all ordered monomials is the whole quotient $\ast_\bn\ga/ I$, 
\end{itemize}
then quantum stochastic processes whose distribution factorizes through $\ast_\bn\ga/ I$
 are spreadable if and only if they are exchangeable.
\end{thm}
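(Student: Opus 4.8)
The plan is to dispose of the easy implication first and then concentrate on the substantial one. For \emph{exchangeable $\Rightarrow$ spreadable} I would argue that no hypothesis on $I$ is needed: given a monotone $f$ and a monomial $i_{j_1}(a_1)\cdots i_{j_n}(a_n)$, only the finitely many indices $j_1,\ldots,j_n$ are involved, and since $f$ is strictly increasing it is a bijection of $\{j_1,\ldots,j_n\}$ onto its image; hence one may pick $\s\in\bp_\bn$ with $\s(j_r)=f(j_r)$ for every $r$. Then $\a_\s$ and $\Phi_f$ agree on that monomial, so $\varphi\circ\a_\s=\varphi$ forces $\varphi\circ\Phi_f=\varphi$ there, and by totality of monomials and boundedness this gives spreadability.

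For the converse I would first record the structural consequences of the hypotheses. By (i) the permutation action descends to an action of $\bp_\bn$ on $\ast_\bn\ga/I$, and the factorized state satisfies $\varphi=\widetilde{\varphi}\circ p_I$; thus exchangeability is equivalent to $\widetilde{\varphi}\circ\widetilde{\a}_\s=\widetilde{\varphi}$ for all $\s$. Since $\bp_\bn$ is generated by the adjacent transpositions $(h,h+1)$, and since by (ii) the ordered monomials have dense linear span in the quotient, it suffices — by boundedness of $\widetilde{\varphi}$ and of each $\widetilde{\a}_\s$ — to verify $\varphi(\a_{(h,h+1)}(m))=\varphi(m)$ on a single increasing monomial $m=i_{j_1}(a_1)\cdots i_{j_n}(a_n)$ with $j_1<\cdots<j_n$. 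A second, repeatedly used ingredient is that spreadability makes $\varphi$ depend on an ordered monomial only through its \emph{pattern} $(a_1,\ldots,a_n)$, not through the particular increasing string of indices, since any two increasing strings of equal length are matched by an element of the spreading semigroup.

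With these reductions the argument splits according to how $\{h,h+1\}$ meets $\{j_1,\ldots,j_n\}$. If the intersection has at most one element, then $\a_{(h,h+1)}(m)$ is again an increasing monomial with the \emph{same} pattern as $m$ (only the indices are displaced), so the two values coincide by the pattern-invariance just recorded. The whole difficulty is thus concentrated in the case $h=j_k$, $h+1=j_{k+1}$, where $\a_{(h,h+1)}(m)$ carries $a_k,a_{k+1}$ at the swapped sites, reducing matters to the two-factor swap
\[
\varphi\big(C\, i_h(a_k)\, i_{h+1}(a_{k+1})\, D\big)=\varphi\big(C\, i_{h+1}(a_k)\, i_h(a_{k+1})\, D\big),
\]
with $C$ supported on indices $<h$ and $D$ on indices $>h+1$. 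I expect this swap to be the main obstacle, and it is precisely where both hypotheses must be used together: the right-hand word is not ordered, so spreadability alone cannot evaluate it, and one must invoke (ii) to rewrite its class in $\ast_\bn\ga/I$ as a combination of ordered monomials while using the $\bp_\bn$-invariance (i) of $I$ to keep that rewriting compatible with displacing the sites. Concretely, I would try to isolate the two-variable version of the identity — spreading $C$ and $D$ away and running an induction on the number of factors to peel off the spectators — and then settle it by comparing the increasing and the decreasing ordered realizations of the pair inside the quotient. This is the step where the assumption that $I$ \emph{dials down} the noncommutativity of the free product is indispensable: without it, as the counterexamples of the paper show, a spreadable state need not be exchangeable.
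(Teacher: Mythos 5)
Your reductions are sound: the easy direction, the descent to the quotient, the restriction to adjacent transpositions, the pattern-invariance of $\varphi$ on ordered monomials, and the case analysis isolating the two-factor swap
\[
\varphi\big(C\, i_h(a)\, i_{h+1}(b)\, D\big)=\varphi\big(C\, i_{h+1}(a)\, i_h(b)\, D\big)
\]
all match the skeleton of the paper's argument. But at the crucial step your proposal stops at a hope rather than a proof, and the mechanism you gesture at is not the one that works. Density of ordered monomials (hypothesis (ii)) gives no \emph{rewriting rule}: knowing that the class of the non-ordered word $C\,i_{h+1}(a)i_h(b)D$ is a norm limit of linear combinations of ordered monomials tells you nothing about \emph{which} combinations, nor how $\varphi$ evaluates on them, so "rewrite its class as a combination of ordered monomials and compare the increasing and decreasing realizations" cannot be executed as stated. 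In the paper, (ii) is not used inside the hard step at all; it enters only at the very end, to pass from invariance of $\varphi$ under adjacent transpositions on ordered monomials to invariance on the whole quotient.

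What actually settles the swap is an ergodic-theoretic factorization that is entirely absent from your proposal. The paper first proves (its Lemma \ref{lemproj}) that for a spreadable state the mean-ergodic projection $E_\varphi$ onto the $\bj_\bn$-invariant vectors can be inserted between two blocks of an ordered monomial whose supports are separated: $\langle\pi_\varphi(x)\pi_\varphi(y)v,w\rangle=\langle\pi_\varphi(x)E_\varphi\pi_\varphi(y)v,w\rangle$ for $v,w\in{\rm Ran}\,E_\varphi$. This is proved via the Alaoglu--Birkhoff theorem, writing $E_\varphi$ as a strong limit of convex combinations of the isometries $T^\varphi_h$, $h\in\bj_\bn$, and absorbing each $\alpha_h$ by spreadability. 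Inserting $E_\varphi$ and expanding it over an orthonormal basis $\{v_i\}$ of its range factorizes the two-point function into sums of products of one-point matrix coefficients $\langle\pi_\varphi(i_k(a))v_i,v_j\rangle$; since each functional $\langle\,\cdot\,v_i,v_j\rangle$ is itself $\bj_\bn$-invariant, these coefficients do not depend on the index $k$, and the swapped and unswapped expressions become literally the same sum. Hermitianity ($\varphi(i_h(a)i_{h+1}(b))=\overline{\varphi(i_{h+1}(b^*)i_h(a^*))}$) is what lets one reach the decreasing-ordered word, and a Hahn--Jordan decomposition argument transfers the two-factor identity from states to the invariant functionals $\langle\,\cdot\,v_i,v_j\rangle$, which is exactly what handles your spectators $C$ and $D$ (no induction needed: one inserts $E_\varphi$ on both sides of the middle pair and expands twice). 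Without this idea --- or a genuine substitute for it --- the central step of your proof is missing, so the proposal as written has a real gap.
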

Among all possible quotients of the free product, the  $C^*$-subalgebra 
$C^*(\iota_j(\ga): j\in \bn)
\subset\cb(\ch)$ generated by the process itself is a rather natural object to consider.
We have the following corollary.
\begin{cor}\label{Ryllprocesses}
Let $\big(\ga,\ch,\{\iota_j\}_{j\in \bn},\xi\big)$ be a quantum stochastic process. Suppose:
\begin{itemize}
\item [(i)] the natural action of $\bp_\bn$ on $\ast_\bn\ga$ passes to $C^*(\iota_j(\ga): j\in \bn)$;
\item [(ii)] the closure of the linear span of all ordered monomials is $C^*(\iota_j(\ga): j\in \bn)$;
\end{itemize}
Then the process is spreadable if and only if it is exchangeable.
\end{cor}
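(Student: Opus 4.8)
The plan is to deduce the corollary directly from Theorem \ref{mainRyll} by exhibiting the appropriate ideal and checking that the two hypotheses of the corollary are exactly the two hypotheses of the theorem, transported through the canonical identification of the generated $C^*$-algebra with a quotient of the free product. First I would invoke the universal property of $\ast_\bn\ga$: since the $\iota_j : \ga \to \cb(\ch)$ are $*$-homomorphisms, there is a unique $*$-homomorphism $\pi : \ast_\bn\ga \to \cb(\ch)$ with $\pi\circ i_j = \iota_j$ for all $j$, whose range is precisely $C^*(\iota_j(\ga): j\in\bn)$. Setting $I := \ker\pi$, the map $\pi$ induces an isometric $*$-isomorphism $\overline{\pi} : \ast_\bn\ga/I \to C^*(\iota_j(\ga): j\in\bn)$ with $\overline{\pi}\circ p_I = \pi$, where $p_I$ is the canonical projection.

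Next I would check that the distribution $\varphi$ of the process factorizes through $\ast_\bn\ga/I$. By \eqref{corresp}, $\varphi = \omega_\xi\circ\pi$, where $\omega_\xi := \langle \,\cdot\,\xi, \xi\rangle$ is the vector state associated with $\xi$; in particular $\varphi$ vanishes on $I = \ker\pi$. Hence $\varphi = \widetilde{\varphi}\circ p_I$ with $\widetilde{\varphi} := \omega_\xi\circ\overline{\pi}$ a state on $\ast_\bn\ga/I$, so the process belongs to the class covered by Theorem \ref{mainRyll} once the two conditions on $I$ are verified. For condition (ii), I would observe that $\overline{\pi}$ carries an ordered monomial $p_I(i_{j_1}(a_1)\cdots i_{j_n}(a_n))$ of the quotient, with $j_1<\cdots<j_n$ (or $j_1>\cdots>j_n$), to $\iota_{j_1}(a_1)\cdots\iota_{j_n}(a_n)$, which is exactly an ordered monomial of $C^*(\iota_j(\ga):j\in\bn)$; since $\overline{\pi}$ is an isometric isomorphism, the closed linear span of ordered monomials is the whole quotient if and only if it is the whole generated algebra, which is hypothesis (ii) of the corollary. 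For condition (i), I would use the standard fact that the action $\alpha : \bp_\bn\to\aut(\ast_\bn\ga)$ descends to an action on $\ast_\bn\ga/I \cong C^*(\iota_j(\ga):j\in\bn)$ --- i.e.\ ``passes to the quotient'' in the sense of hypothesis (i) of the corollary --- precisely when $\alpha_\sigma(I)=I$ for every $\sigma\in\bp_\bn$, that is, precisely when $I$ is $\bp_\bn$-invariant, which is hypothesis (i) of the theorem.

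With both conditions in place, Theorem \ref{mainRyll} applies to the ideal $I$ and yields that every process whose distribution factorizes through $\ast_\bn\ga/I$ is spreadable if and only if it is exchangeable; since the distribution of the given process does factorize through $\ast_\bn\ga/I$, and since spreadability and exchangeability are properties of $\varphi$ alone (invariance under $\bj_\bn$, resp.\ $\bp_\bn$), the conclusion transfers to the process itself. The proof is thus essentially bookkeeping: the only point requiring care is the equivalence between ``the permutation action passes to the generated $C^*$-algebra'' and the $\bp_\bn$-invariance of $I=\ker\pi$, together with the verification that $\overline{\pi}$ matches ordered monomials on the two sides. I do not expect a genuine analytic obstacle here, all the substance of the result being contained in Theorem \ref{mainRyll}.
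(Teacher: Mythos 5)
Your proposal is correct and is exactly the argument the paper intends: the corollary is stated as an immediate consequence of Theorem \ref{mainRyll}, obtained by taking $I=\ker\pi$ for the canonical $*$-epimorphism $\pi:\ast_\bn\ga\to C^*(\iota_j(\ga):j\in\bn)$, under which the two hypotheses of the corollary translate verbatim into the two hypotheses of the theorem (with the group property of $\bp_\bn$ upgrading $\a_\s(I)\subseteq I$ to $\a_\s(I)=I$, as you note). The paper omits these bookkeeping details entirely, so your write-up supplies precisely the intended, and correct, verification.
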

\begin{rem}\label{tretre}
As for Theorem \ref{mainRyll},
note that if there exists a faithful $\bj_\bn$-invariant state on $\ast_\bn\ga/ I$, whose GNS representation is faithful,
then condition (i) is also necessary for the result to hold.
Indeed, if the state is also $\bp_\bn$-invariant, then by Lemma \ref{quotientlemma} the ideal $I$ is invariant under the action of $\bp_\bn$, which thus
passes to  $\ast_\bn\ga/ I$. 
Similarly, condition (i) of Corollary \ref{Ryllprocesses} is also necessary without even assuming the existence of a spreadable state with faithful GNS representation.
\end{rem}
In order to prove Theorem \ref{mainRyll}, we start by establishing a preliminary result.
Before stating it, we need to further explain the adopted notation. For any
spreadable state  $\varphi $  on the twisted tensor product  $\ast_\bn\ga$
indexed by $\bn$, by a minor abuse of notation we continue to denote by
$E_\varphi$ the orthogonal projection from $\ch_\varphi$ onto the closed subspace
$\{v\in\ch_\varphi: T_h^\varphi v=v,\, \textrm{for all}\, h\in \bj_\bn\}$, where, for every
$h$ in $\bj_\bz$, $T_h^\varphi\in \cb(\ch_\varphi)$ is the (proper) isometry uniquely determined by
$$T_h^{\varphi}  \pi_{\varphi}(a)\xi_\varphi=\pi_{\varphi}(\a_h(a))\xi_\varphi\,\, a\in\ast_\bn\ga\,.$$

\begin{lem}\label{lemproj}
If $\varphi $ is a spreadable state on  $\ast_\bn\ga$, then for all $v, w$ in ${\rm Ran}E_\varphi$
$$\ast_\bn\ga\ni
b\mapsto \langle \pi_{\varphi}(b)v, w \rangle$$ is a $\bj_\bn$-invariant linear functional.\\
Moreover, one has 
$$\langle \pi_{\varphi}(x)\pi_{\varphi}(y)v, w\rangle=\langle \pi_{\varphi}(x)E_{\varphi}\pi_{\varphi}(y)\, v, w\rangle\,,\quad \textrm{for all}\,\, v, w\in\,\,{\rm Ran} E_{\varphi}$$
for all $x=i_{j_1}(a_1)\cdots i_{j_n}(a_n)$ and $y=i_{k_1}(b_1)\cdots i_{k_m}(b_m)$ with 
$j_1<\ldots<j_n< k_1<\ldots <k_m$ or $j_1>\ldots>j_n> k_1>\ldots >k_m$, for all
$n, m, j_1, \ldots, j_n, k_1, \ldots, k_m\in\bn$, $a_1, \ldots, a_n, b_1, \ldots, b_m\in\ga$.
\end{lem}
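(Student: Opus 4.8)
The plan is to realise the semigroup $\bj_\bn$ on the GNS space by isometries and to identify $E_\varphi$ as the mean ergodic projection of the full shift. First I would, for each $f\in\bj_\bn$, define $V_f$ on $\ch_\varphi$ by $V_f\pi_\varphi(a)\xi_\varphi:=\pi_\varphi(\Phi_f(a))\xi_\varphi$; spreadability ($\varphi\circ\Phi_f=\varphi$) makes $V_f$ a well-defined isometry fixing $\xi_\varphi$, and a computation on the cyclic vector gives both the intertwining relation $V_f\pi_\varphi(a)=\pi_\varphi(\Phi_f(a))V_f$ and the multiplicativity $V_{f\circ g}=V_fV_g$. Writing $V_0:=V_{\theta_0}$ for the full right shift, von Neumann's mean ergodic theorem yields that $\frac1N\sum_{p=0}^{N-1}V_0^p$ converges strongly to the projection onto $\{\xi:V_0\xi=\xi\}$. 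An essential preliminary point is that this fixed space is exactly the space of vectors fixed by \emph{all} $V_f$, so that it equals ${\rm Ran}E_\varphi$: from the identities $\theta_1\circ\theta_0=\theta_0^2$ and $\theta_h\circ\theta_0=\theta_0\circ\theta_{h-1}$ one gets $V_{\theta_h}V_0=V_0V_{\theta_{h-1}}$, so $V_0\xi=\xi$ forces $V_{\theta_h}\xi=\xi$ by induction on $h$, hence $V_f\xi=\xi$ for every generator and thus for every $f\in\bj_\bn$.

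The first assertion is then immediate. For $v,w\in{\rm Ran}E_\varphi$ we have $V_fv=v$ and, since $V_f$ is an isometry, $V_f^*w=V_f^*V_fw=w$. Using $v=V_fv$ and the intertwining relation,
\[
\langle\pi_\varphi(\Phi_f(b))v,w\rangle=\langle\pi_\varphi(\Phi_f(b))V_fv,w\rangle=\langle V_f\pi_\varphi(b)v,w\rangle=\langle\pi_\varphi(b)v,V_f^*w\rangle=\langle\pi_\varphi(b)v,w\rangle,
\]
so $b\mapsto\langle\pi_\varphi(b)v,w\rangle$ is $\bj_\bn$-invariant.

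For the factorization identity I would treat the ascending case $j_1<\dots<j_n<k_1<\dots<k_m$ first. Replacing $E_\varphi$ by its Cesàro expression and using strong convergence together with boundedness of $\pi_\varphi(x)$,
\[
\langle\pi_\varphi(x)E_\varphi\pi_\varphi(y)v,w\rangle=\lim_N\frac1N\sum_{p=0}^{N-1}\langle\pi_\varphi(x)V_0^p\pi_\varphi(y)v,w\rangle.
\]
The intertwining relation gives $V_0^p\pi_\varphi(y)=\pi_\varphi(\Phi_{\theta_0^p}(y))V_0^p$, and $V_0^pv=v$, so each summand equals $\langle\pi_\varphi(x\,\Phi_{\theta_0^p}(y))v,w\rangle$, where $\Phi_{\theta_0^p}(y)$ is the monomial $y$ with every index raised by $p$. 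The key observation is that the map $g$ fixing $\{0,\dots,j_n\}$ and sending $l\mapsto l+p$ for $l>j_n$ lies in $\bj_\bn$ and satisfies $\Phi_g(xy)=x\,\Phi_{\theta_0^p}(y)$, precisely because $j_i\le j_n$ while $k_i>j_n$. By the $\bj_\bn$-invariance just established, every summand equals $\langle\pi_\varphi(xy)v,w\rangle$, independent of $p$, so the Cesàro average collapses to $\langle\pi_\varphi(x)\pi_\varphi(y)v,w\rangle$.

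Finally, the descending case $j_1>\dots>j_n>k_1>\dots>k_m$ reduces to the ascending one by passing to adjoints: $y^*x^*$ is an ascending monomial, its indices being $k_m<\dots<k_1<j_n<\dots<j_1$. Applying the ascending case to $\langle\pi_\varphi(y^*)E_\varphi\pi_\varphi(x^*)w,v\rangle$ and conjugating, using $E_\varphi^*=E_\varphi$ and that both $v,w$ lie in ${\rm Ran}E_\varphi$, recovers the desired equality. I expect the main obstacle to be the identification of ${\rm Ran}E_\varphi$ as the common fixed space of all the $V_f$ (needed to legitimise the first assertion) together with the bookkeeping of the spreading map $g$ in the factorization step; once these are in place, the mean ergodic limit and the adjoint reduction are routine.
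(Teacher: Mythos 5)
Your proof is correct, and its overall skeleton matches the paper's: implement $\bj_\bn$ by isometries on the GNS space, establish $\bj_\bn$-invariance of the matrix-coefficient functionals $\langle\pi_\varphi(\cdot)v,w\rangle$ for $v,w\in{\rm Ran}\,E_\varphi$, replace $E_\varphi$ by a limit of averages of implementing isometries, absorb the displacement of the support of $y$ by a map in $\bj_\bn$ that fixes the support of $x$ pointwise, and reduce the descending case to the ascending one by passing to adjoints. The genuine difference is the averaging device. The paper invokes the Alaoglu--Birkhoff theorem to write $E_\varphi$ as the strong limit of a \emph{net} of convex combinations $\sum_i\mu_i^\lambda T_{h_i^\lambda}^\varphi$ with arbitrary $h_i^\lambda\in\bj_\bn$, whereas you use von Neumann's mean ergodic theorem for the single shift isometry $V_0=V_{\theta_0}$. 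This forces you to prove an extra preliminary fact the paper never needs, namely that the fixed space of $V_0$ alone coincides with the common fixed space of all of $\bj_\bn$ (which your induction via $\theta_h\circ\theta_0=\theta_0\circ\theta_{h-1}$ and the multiplicativity $V_{f\circ g}=V_fV_g$ does establish correctly); in exchange you get concrete sequential Ces\`aro averages instead of abstract nets, and the $p$-independence of the summands makes the limit collapse trivially. A second, minor difference: the paper proves the pointwise identity $T_h^\varphi\pi_\varphi(a)v=\pi_\varphi(\alpha_h(a))v$ only for $v\in{\rm Ran}\,E_\varphi$ by a cyclicity argument, while you upgrade it to the operator intertwining relation $V_f\pi_\varphi(a)=\pi_\varphi(\Phi_f(a))V_f$, which is indeed valid on all of $\ch_\varphi$ since both sides are bounded and agree on the dense cyclic subspace; this makes your computations slightly cleaner.
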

\begin{proof}
We start by proving that the linear functional  $\ast_\bn\ga\ni
b\mapsto \langle \pi_{\varphi}(b)v, w \rangle$ is $\bj_\bn$-invariant.
 For any $v$ in ${\rm Ran} E_{\varphi}$, we 
 have $$T_h^{\varphi}  \pi_{\varphi}(a)v=\pi_{\varphi}(\a_h(a))v\,,$$ 
 for all $a$ in $\ast_\bn\ga$.
 This can be proved as follows. By cyclicity of $\xi_{\varphi}$, there exists a sequence
 $\{b_n: n\in\bn\}\subset \ast_\bn\ga$ such that $v=\lim_n \pi_{\varphi}(b_n)\xi_{\varphi}$. 
 But then
 \begin{align*}
 T_h^{\varphi} \pi_{\varphi}(a)v&=\lim_n T_h^{\varphi}\pi_{\varphi}(ab_n)\xi_{\varphi}=\lim_n \pi_{\varphi}(\a_h(a))\pi_{\varphi}(\a_h(b_n))\xi_{\varphi}\\
 &=\pi_{\varphi}(\a_h(a))v\,,
 \end{align*}
  in that
 $v=T_h^{\varphi} v=\lim_n T_h^{\varphi} \pi_{\varphi}(b_n)\xi_{\varphi}=\lim_n \pi_{\varphi}(\a_h(b_n))\xi_{\varphi}$.\\
 We are now ready to show our functional is $\bj_\bn$-invariant. For every $b$ in $\ast_\bn\ga$, we have
 \begin{align*}
 \langle \pi_{\varphi}(\a_h(b))v, w \rangle&=\langle T_h^{\varphi} \pi_{\varphi}(b)v, w \rangle=
 \langle \pi_{\varphi}(b)v, (T_h^{\varphi})^*w \rangle\\
 &=\langle  \pi_{\varphi}(b)v, w \rangle\,.
 \end{align*}
 Let now $x, y$ in $\ast_\bn\ga$ as in the statement. We start by dealing with the case when
$j_1<\ldots<j_n< k_1<\ldots <k_m$.
 We apply the Alaoglu-Birkhoff theorem,  see \cite[Proposition 4.3.4]{BR1},  to express $E_{\varphi}$ as a strong limit of a net $\{X_\lambda: \lambda\in \Lambda\}$, with each
 $X_\lambda$ being a finite convex combination of the implementing isometries of maps $h$ in $\bj_\bn$, that is $X_\lambda= \sum_{i=1}^{n_\lambda}\mu_i^\lambda T_{h_i^\lambda}^{\varphi}$, where, for each $\lambda$ in $\Lambda$, $\mu_i^{\lambda}\geq 0$ for all $i=1, \ldots, n_\lambda$, $\sum_{i=1}^{n_\lambda}\mu_i^\lambda=1$,
 and $h_i^\lambda$ lies in $\bj_\bn$. Since we have
 $$
\langle \pi_{\varphi}(x)E_{\varphi}\pi_{\varphi}(y)v, w\rangle=\lim_\lambda\sum_{i=1}^{n_\lambda} \mu_i^{\lambda}\langle \pi_{\varphi}(x)T_{h_i^\lambda}^{\varphi}\pi_{\varphi}(y)v, w\rangle\,,
$$
it is enough to make sure each term $\langle \pi_{\varphi}(x)T_{h_i^\lambda}^{\varphi}\pi_{\varphi}(y)v, w\rangle$ separately  equals 
$\langle \pi_{\varphi}(x)\pi_{\varphi}(y)v, w\rangle$. This can be seen as follows

\begin{align*}
\langle \pi_{\varphi}(x)T_{h_i^\lambda}^{\varphi}\pi_{\varphi}(y)v, w\rangle=\langle \pi_{\varphi}(x)\pi_{\varphi}(\a_{h_i^\lambda}(y))v, w\rangle=
\langle \pi_{\varphi}(xy)v, w\rangle\,
\end{align*}
where the last equality is got to by spreadability of the linear functional $\langle\,\cdot\, v, w \rangle$ considering a monotone map
of $\bn$ to itself which is the identity on the subset $\{j_1, \ldots, j_n\}$ and acts as $h_i^\lambda$ on $\{k_1, \ldots, k_m\}$.
Finally, the other case, {\it i.e.} when the support of $x$ lies to the right of that of $y$, can be reconducted to the first case because
$\langle \pi_{\varphi}(x)E_{\varphi}\pi_{\varphi}(y)v, w\rangle=\langle v, 	\pi_\varphi(y^*)E_\varphi \pi_\varphi(x^*)w\rangle$.

\end{proof}

\begin{proof}[Proof of Theorem \ref{mainRyll}]
We only need to prove that a spreadable process with distribution $\varphi$ is exchangeable as well.
Since the permutation group is generated by transpositions, it is enough to verify that $\varphi$ is invariant
under transpositions.
We start by proving this invariance for monomials of the type
$i_k(a)i_{k+1}(b)$ for $k\in\bn$, $a, b\in\ga$  with respect to the transposition that switches
$k$ and $k+1$.
On the one hand, thanks to Lemma \ref{lemproj} we have
\begin{align*}
\varphi(i_k(a)i_{k+1}(b))&=\overline{ \varphi(i_{k+1}(b^*)i_k(a^*))}=\overline{\langle\pi_\varphi(i_{k+1}(b^*))E_\varphi \pi_\varphi(i_k(a^*))\xi_\varphi,\xi_\varphi\rangle}\\
&=\sum_{i\in I}\overline{\langle\pi_\varphi(i_k(a^*)\xi_\varphi, v_i \rangle}\overline{\langle\pi_\varphi(i_{k+1}(b^*)v_i, \xi_\varphi \rangle}\\
&=\sum_{i\in I} \langle \pi_\varphi(i_k(a))v_i,\xi_\varphi \rangle\langle\pi_\varphi(i_{k+1}(b))\xi_\varphi, v_i \rangle\,,
\end{align*}
where $\{v_i:i\in I\}$ is an orthonormal basis of ${\rm Ran} E_\varphi$.\\
On the other hand, we have
\begin{align*}
\varphi(i_{k+1}(a)i_k(b))&=\langle \pi_\varphi(i_{k+1}(a))E_\varphi\pi_\varphi(i_k(b))\xi_\varphi, \xi_\varphi \rangle\\
&=\sum_{i\in I}\langle \pi_\varphi(i_k(b))\xi_\varphi, v_i \rangle\langle\pi_\varphi(i_{k+1}(a)v_i, \xi_\varphi \rangle\\
&=\sum_{i\in I}\langle \pi_\varphi(i_{k+1}(b))\xi_\varphi, v_i \rangle\langle\pi_\varphi(i_k(a)v_i, \xi_\varphi \rangle\,
\end{align*}
where the last equality is due to the fact that the linear functionals $\langle \, \cdot \, \xi_\varphi, v_i \rangle$
and $\langle \, \cdot \, v_i, \xi_\varphi \rangle$ are all spreadable (and {\it a fortiori} shift-invariant).
Comparing the two expressions, we see $\varphi(i_k(a)i_{k+1}(b))=\varphi(i_{k+1}(a)i_k(b))$.\\
By the uniqueness of the Hanh-Jordan decomposition of a bounded linear functional, it follows that every
$\bj_\bn$-invariant linear functional is a linear combination of four $\bj_\bn$-invariant states. Therefore,
any such linear functional $f$ still satisfies $f(i_k(a)i_{k+1}(b))= f(i_{k+1}(a)i_k(b))$, for all $k$ in $\bn$ and
$a, b$ in $\ga$.\\
We are now ready to prove the invariance of $\varphi$ under the transposition that switches $k$ and $k+1$ on general monomials of the type $xi_k(a)i_{k+1}(b)y$, where $x=i_{j_1}(x_1)\cdots i_{j_l}(x_l)$, $j_1<j_2<\ldots <j_l<k$, $x_1, \ldots, x_l$ in $\ga$,
 and $y=i_{s_1} (y_1)\cdots i_{s_m} (y_m)$, $k+1<s_1<s_2<\ldots < s_m$, $y_1, \ldots, y_m$ in $\ga$ (terms
ordered in the reverse order can be dealt with analogously). We have
\begin{align*}
\varphi(x i_k(a)i_{k+1}(b)y) 
&= \langle \pi_\varphi(xi_k(a)i_{k+1}(b)y)\xi_\varphi, \xi_\varphi \rangle\\
&= \langle \pi_\varphi(x)\pi_\varphi(i_k(a)i_{k+1}(b))E_\varphi \pi_\varphi(y)\xi_\varphi, \xi_\varphi \rangle\\
&= \sum_{i\in I} \langle \pi_\varphi(x)\pi_\varphi(i_k(a)i_{k+1}(b))v_i, \xi_\varphi \rangle \langle \pi_\varphi(y)\xi_\varphi, v_i \rangle\\
&= \sum_{i\in I} \langle \pi_\varphi(x)E_\varphi\pi_\varphi(i_k(a)i_{k+1}(b))v_i, \xi_\varphi \rangle \langle \pi_\varphi(y)\xi_\varphi, v_i \rangle\\
&= \sum_{i\in I} \sum_{j\in I} \langle \pi_\varphi(i_k(a)i_{k+1}(b))v_i, v_j \rangle \langle \pi_\varphi(x)v_j, \xi_\varphi \rangle \langle \pi_\varphi(y)\xi_\varphi, v_i \rangle\\
&= \sum_{i\in I} \sum_{j\in I} \langle \pi_\varphi(i_{k+1}(a)i_{k}(b))v_i, v_j \rangle \langle \pi_\varphi(x)v_j, \xi_\varphi \rangle \langle \pi_\varphi(y)\xi_\varphi, v_i \rangle\\
\end{align*}
 where the last equality holds because the linear functionals $\langle \, \cdot \, v_i, v_j \rangle$ are  $\bj_\bn$-invariant, with
 $\{v_i:i\in I\}$ being an orthonormal basis of ${\rm Ran} E_\varphi$.
  Now, by making the same computation in the reverse order we find
 \begin{align*}
 \sum_{i\in I} \sum_{j\in I} \langle \pi_\varphi(i_{k+1}(a)i_{k}(b))v_i, v_j \rangle \langle \pi_\varphi(x)v_j, \xi_\varphi \rangle \langle \pi_\varphi(y)\xi_\varphi, v_i \rangle=
 \varphi(x i_{k+1}(a)i_k(b)y)\,.
\end{align*}
Since the quotient $\ast_\bn\ga/ I$ is  the closure of the linear span of ordered monomials and $I$ is invariant under $\bp_\bn$, we finally see that $\varphi$ is exchangeable.
\end{proof}

An immediate consequence of Theorem \ref{mainRyll}
is the following.
\begin{cor}\label{Rylltwisted}
Let $\big(\ga,\ch,\{\iota_j\}_{j\in \bn},\xi\big)$ be a quantum stochastic process, whose distribution $\varphi$ factorizes through 
 $\bigotimes_v^\bn\ga$, with $v$ being antisymmetric.
 Then, the process is spreadable if and only if it is exchangeable.
\end{cor}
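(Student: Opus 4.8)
The plan is to realize $\bigotimes_v^\bn\ga$ as the quotient $\ast_\bn\ga/I$, where $I$ is the closed two-sided ideal generated by the relators $r_{k,l}(a,b):=i_k(a)i_l(b)-v(\partial a,\partial b)\,i_l(b)i_k(a)$ for $k<l$ and homogeneous $a,b$ in $\ga$, and then to verify that this $I$ satisfies hypotheses (i) and (ii) of Theorem \ref{mainRyll}. Once both are in place, the conclusion is immediate from that theorem, so the whole task reduces to checking two structural properties of $I$.

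For condition (ii), I would argue that every monomial $i_{j_1}(a_1)\cdots i_{j_n}(a_n)$ with homogeneous entries can, modulo $I$, be rewritten as a scalar multiple of an ordered monomial. Indeed, the commutation rules \eqref{commrules} let me transpose any two adjacent factors localized at distinct sites at the cost of a phase in $\bt$, so repeatedly sorting the indices brings them into increasing order; whenever two adjacent factors share the same index, I collapse them using that each $i_j$ is a $*$-homomorphism, thereby shortening the word. Since the algebraic layer $\ga_0=\oplus_\chi V_\chi$ is dense in $\ga$, it suffices to treat homogeneous entries, and since the localized elements are dense in $\bigotimes_v^\bn\ga$, the closed linear span of ordered monomials exhausts the whole quotient.

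For condition (i), I would check that $\a_\s(I)\subset I$ for every transposition $\s$, which suffices as $\bp_\bn$ is generated by transpositions. Applying $\a_\s$ to a relator $r_{k,l}(a,b)$ produces $i_{\s(k)}(a)i_{\s(l)}(b)-v(\partial a,\partial b)\,i_{\s(l)}(b)i_{\s(k)}(a)$. If $\s$ preserves the order of $k,l$ this is again a relator and lies in $I$; the only nontrivial case is when $\s$ reverses their order. Here, using the relation at the swapped pair to commute the second term back, one finds modulo $I$ the residual factor $\big(1-v(\partial a,\partial b)\,v(\partial b,\partial a)\big)$ times a monomial. This vanishes precisely when $v(\partial a,\partial b)\,v(\partial b,\partial a)=1$, i.e.\ when $v$ is antisymmetric, which is our standing hypothesis (this is the observation already recorded in the preliminaries that the $\bp_\bn$-action passes to the twisted product exactly when $v$ is antisymmetric). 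Hence $I$ is $\bp_\bn$-invariant.

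The single delicate point is this order-reversing case in (i): antisymmetry is not merely convenient but is exactly the condition that makes the permutation action well defined on the quotient, and it is the hypothesis that cannot be dropped. Everything else---the sorting of words in (ii) and the density reductions---is routine bookkeeping, so with (i) and (ii) verified, Theorem \ref{mainRyll} yields that spreadability and exchangeability coincide for any process whose distribution factorizes through $\bigotimes_v^\bn\ga$.
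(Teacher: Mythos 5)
Your proposal is correct and follows exactly the route the paper intends: the paper states Corollary \ref{Rylltwisted} as an immediate consequence of Theorem \ref{mainRyll}, relying on the facts (recorded in its preliminaries) that $\bigotimes_v^\bn\ga$ is a quotient of $\ast_\bn\ga$, that the $\bp_\bn$-action passes to this quotient precisely when $v$ is antisymmetric, and that the commutation rules \eqref{commrules} allow every monomial to be sorted into an ordered one up to a phase. Your write-up simply makes explicit the verification of hypotheses (i) and (ii) that the paper leaves implicit, including the correct identification of antisymmetry as the exact condition needed in the order-reversing case.
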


\subsection{Counterexamples: spreadable processes which are not exchangeable} \label{SecBraid}
The present section is devoted to showcasing a wide class of quantum stochastic processes that are spreadable yet not exchangeable.   Product states on twisted tensor products turn out to be
a key tool to provide such examples.\\
To begin with, we characterize product states on twisted tensor products. To this end,
we start by defining what we call the spectral support of a state on the sample $\ga$.
Given a state $\om$ on a $G$-graded $C^*$-algebra $(\ga, G, \gamma)$, its spectral support is the set 
$${\rm supp}_G\,\om :=\{\eta\in\widehat{G}: \exists\, a\in\ga_\eta\,\,	{\rm with}\,\, \om(a)\neq 0\}\, .$$
Note that $\om$ is $G$-invariant if and only if ${\rm supp}_G\,\om$ is trivial.

\begin{thm}\label {existenceprod}
Let $\om$ be a state on $\ga$. There exists a (unique) state on $\ga\otimes_v\ga$, $\om\times\om$, such that
$\om\times\om\,(a\otimes b)=\om(a)\om(b)$ for all $a, b\in\ga$ if and only if for any
$\eta_1, \eta_2$ in ${\rm supp}_G\,\om $ one has $v(\eta_1, \eta_2)=1$.
\end{thm}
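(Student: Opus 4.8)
The plan is to work on the algebraic layer $\ga_0\otimes_v\ga_0$, on which the prescription $\om\times\om(a\otimes b)=\om(a)\om(b)$ already determines a unique unital linear functional by bilinearity; the whole content of the statement is then the equivalence between its (algebraic) positivity and the condition on $v$. Throughout I write $S:={\rm supp}_G\om$ and record two facts: $S$ is symmetric, since $a\in V_\eta$ with $\om(a)\neq0$ forces $\om(a^*)=\overline{\om(a)}\neq0$ with $a^*\in V_{\eta^{-1}}$, and the trivial character lies in $S$ because $\om(1)=1$. The necessity uses only that a state is Hermitian; the sufficiency is an algebraic positivity computation.

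For necessity, assume $\Phi:=\om\times\om$ is a state, fix $\eta_1,\eta_2\in S$, and choose $a\in V_{\eta_1}$, $b\in V_{\eta_2}$ with $\om(a)\neq0\neq\om(b)$. Using the rules $(c\otimes d)^*=\overline{v(\partial c,\partial d)}\,c^*\otimes d^*$ and $(c\otimes d)(c'\otimes d')=\overline{v(\partial c',\partial d)}\,cc'\otimes dd'$, and that $v$ is trivial as soon as one entry is the trivial character, one finds
$$(a\otimes 1)^*(1\otimes b)=a^*\otimes b,\qquad (1\otimes b)^*(a\otimes 1)=v(\eta_1,\eta_2)\,a\otimes b^*.$$
Applying $\Phi$ gives $\overline{\om(a)}\om(b)$ and $v(\eta_1,\eta_2)\om(a)\overline{\om(b)}$. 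Since these two elements are adjoint to one another and $\Phi(y^*)=\overline{\Phi(y)}$, comparison yields $\overline{\om(a)}\om(b)=\overline{v(\eta_1,\eta_2)}\,\overline{\om(a)}\om(b)$, hence $v(\eta_1,\eta_2)=1$.

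For sufficiency, assume $v\equiv1$ on $S\times S$. First I would promote this to the subgroup $\overline S:=\langle S\rangle$: by bimultiplicativity $v(\eta_1\eta_2,\eta_3)=v(\eta_1,\eta_3)v(\eta_2,\eta_3)=1$ for $\eta_i\in S$, and iterating gives $v\equiv1$ on $\overline S\times\overline S$. Now take $x=\sum_k a_k\otimes b_k$ with $a_k\in V_{\chi_k}$, $b_k\in V_{\eta_k}$ homogeneous. Expanding $x^*x$ with the two rules above yields
$$\om\times\om(x^*x)=\sum_{k,l} v(\chi_k^{-1}\chi_l,\eta_k)\,A_{kl}B_{kl},\qquad A_{kl}:=\om(a_k^*a_l),\quad B_{kl}:=\om(b_k^*b_l),$$
where $A$ and $B$ are positive semidefinite Gram matrices. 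Since $a_k^*a_l\in V_{\chi_k^{-1}\chi_l}$, the entry $A_{kl}B_{kl}$ vanishes unless $\chi_k^{-1}\chi_l\in S\subseteq\overline S$ and $\eta_k^{-1}\eta_l\in S\subseteq\overline S$. The crux is to control the phases $C_{kl}:=v(\chi_k^{-1}\chi_l,\eta_k)$ on the surviving pairs. Declaring $k\sim l$ when $\chi_k^{-1}\chi_l\in\overline S$ and $\eta_k^{-1}\eta_l\in\overline S$, the fact that $\overline S$ is a subgroup makes $\sim$ an equivalence relation, and the vanishing just noted means $A_{kl}B_{kl}=0$ whenever $k\not\sim l$, so the sum splits over $\sim$-classes. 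On a fixed class, since $v(\zeta,\eta)=v(\zeta,\eta')$ whenever $\zeta\in\overline S$ and $\eta^{-1}\eta'\in\overline S$, one checks that $C$ is Hermitian and satisfies $C_{kl}C_{lm}C_{mk}=1$ (replace all second arguments by a common $\eta_k$, after which the first arguments telescope to the trivial character). Fixing a base index $k_0$ and setting $u_k:=C_{k_0k}$ (a phase), this cocycle identity gives $C_{kl}=\overline{u_k}u_l$ on the class, so $\sum_{k,l}C_{kl}A_{kl}B_{kl}=\sum_{k,l}\overline{u_k}u_l A_{kl}B_{kl}$; as $(\overline{u_k}u_lA_{kl})$ is a diagonal congruence of $A$, hence positive semidefinite, the Schur product theorem makes $(\overline{u_k}u_lA_{kl})\circ B$ positive semidefinite and its total entry-sum $\mathbf 1^*\big((\overline{u_k}u_lA_{kl})\circ B\big)\mathbf 1\geq0$. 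Summing over classes gives $\om\times\om(x^*x)\geq0$.

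Thus $\om\times\om$ is a positive unital functional on the dense $*$-subalgebra $\ga_0\otimes_v\ga_0$; exactly as in the construction of product states recalled in the preliminaries, its GNS representation is contractive for $\|\cdot\|_{\max}$, so it extends, uniquely by density, to a state on $\ga\otimes_v\ga$. I expect the sufficiency direction to be the main obstacle, and within it the reduction to the subgroup $\overline S$ that turns $\sim$ into an equivalence relation and lets the phase matrix factor as the rank-one coboundary $\overline{u_k}u_l$; once this is secured, positivity is delivered by the Schur product theorem precisely as in the known $G$-invariant case.
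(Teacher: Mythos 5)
Your proof is correct, but its sufficiency half takes a genuinely different route from the paper's. For necessity the two arguments coincide: the paper applies hermitianity, $\om\times\om(y^*)=\overline{\om\times\om(y)}$, directly to $y=a\otimes b$ via $(a\otimes b)^*=\overline{v(\partial a,\partial b)}\,a^*\otimes b^*$, while you apply it to $y=(a\otimes 1)^*(1\otimes b)$ and its adjoint; the content is identical. For sufficiency the paper never verifies positivity by hand: it forms the subgroup $H_\om=\langle{\rm supp}_G\,\om\rangle$ (your $\overline S$), its annihilator $G_\om=H_\om^\perp\subset G$, and the conditional expectation
$$F=\int_{G_\om\times G_\om}\gamma_g\otimes\gamma_h\;{\rm d}\mu_{G_\om\times G_\om}$$
on the completed algebra $\ga\otimes_v\ga$; the range of $F$ is the closed span of tensors with both degrees in $H_\om$, where $v\equiv 1$, hence an honest untwisted tensor-product completion of $\ga^{G_\om}\odot\ga^{G_\om}$, and one sets $\om\times\om:=(\om\otimes\om)\circ F$ with $\om\otimes\om$ the ordinary product state pulled back from the minimal tensor product. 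Positivity is then automatic (a state composed with a positive unital map), and the product formula holds because $F$ annihilates exactly those homogeneous tensors $a\otimes b$ on which $\om(a)\om(b)=0$ anyway. Your argument instead proves algebraic positivity on $\ga_0\otimes_v\ga_0$ directly: the Gram-matrix expansion, the support constraint forcing the double sum to split over $\sim$-classes, the observation that on each class the phase matrix is a Hermitian cocycle and hence a coboundary $\overline{u_k}u_l$, and Schur's product theorem — all of these steps check out, including the key identity $v(\zeta,\eta)=v(\zeta,\eta')$ for $\zeta\in\overline S$, $\eta^{-1}\eta'\in\overline S$. This is more elementary and makes visible precisely where triviality of $v$ on $\overline S\times\overline S$ enters (trivializing the phase cocycle); the price is that you must then invoke the algebraic-to-$C^*$ extension machinery of the preliminaries (boundedness of the GNS representation with respect to $\|\cdot\|_{\rm max}$, resting on \cite{FidVic}), a step the paper's proof sidesteps entirely by working at the $C^*$-level from the start. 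Both routes are sound; yours trades the paper's structural input (conditional expectations, the surjection onto the minimal tensor product) for a self-contained positivity computation.
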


\begin{proof}
We start by proving that the equality $v(\eta_1, \eta_2)=1$, for $\eta_1, \eta_2$ in ${\rm supp}_G\,\om $ is a necessary
condition for the state $\om\times\om$ to exist.
Let $\eta_1, \eta_2$ in ${\rm supp}_G\,\om $ and $a, b$ in $\ga$ such that $\om(a)\neq 0$, $\om(b)\neq 0$ and
$\partial a=\eta_1$, $\partial b=\eta_2$. By hermitianity of $\om\times\om$, we have
\begin{align*}
\om(a)\om(b)&=\om\times\om\,(a\otimes b)=\overline{\om\times\om((a\otimes b)^*)}=
\overline{\om\times\om   (\overline{v(\partial a, \partial b)} a^*\otimes b^*)}\\
&=v(\partial a, \partial b ) \overline{\om(a^*)\om(b^*)}=v(\partial a, \partial b )\om(a)\om(b)\,
\end{align*}
hence $v(\eta_2, \eta_1)=1$, as $\om(a)\om(b)$ is different from $0$. \\
For the sufficiency, we start by observing that the condition implies
${\rm supp}_G\,\om  \subset \Delta_v$. Second, the subgroup $H_\om$ of $\widehat{G}$ generated by
${\rm supp}_G\,\om $ is a subset of $\Delta_v$. Let us consider its annihilator
$G_\om := H_\om^\perp:=\{g\in G: \chi(g)=1\,\,\textrm{for all}\,\, \chi\in H_\om\}\subset G$.  \\
The conditional expectation
$$F(x) := \int_{G_\om\times G_\om} \a_g\otimes\a_h (x)\, {\rm d}\mu_{G_\om\times G_\om}\, ,x\in\ga\otimes_v\ga\,,$$ projects onto the subalgebra of those elements fixed by all automorphisms
$\a_g\otimes\a_h $, $(g, h)$ in $G_\om\times G_\om$. We claim that these are exactly the closed linear span
of elements of the type $a\otimes b$, with homogeneous $a, b$ in $\ga$ and $\partial a, \partial b$ in $H_\om$.
 Indeed, it is clear that $F(x)=x$ is $x=a\otimes b$ with
$a, b\in\ga$  homogeneous and $\partial a, \partial b$ are in $H_\om$. 
In addition, $F(x)=0$ for all $x=a\otimes b$, when $a, b\in\ga$  are homogeneous
and one of $\partial a, \partial b$ does not sit in $G_\om^\perp=H_\om^{\perp\perp}=H_\om$.
This can be seen as follows. For such an $x$, $F(x)=\int_{G_\om} \partial a(g){\rm d}\mu_{G_\om}(g)\int_{G_\om} \partial b(g){\rm d}\mu_{G_\om}(g)\, a\otimes b$. If for example $\partial a$ does not fit in $G_\om^\perp$, then
the restriction of $\partial a$ to $G_\om$ is a non-trivial character of $G_\om$, and therefore  
$\int_{G_\om} \partial a(g){\rm d}\mu_{G_\om}(g)=0$, which implies $F(x)=0$.\\
Since the restriction of $v$  to $H_\om\times H_\om$ is $1$, the latter subalgebra identifies to some completion $\ga^{G_\om}\overline{\otimes} \ga^{G_\om}$ of the algebraic tensor product
tensor product of $\ga^{G_\om}$ with itself, which projects onto the minimal tensor
product $\ga^{G_\om}\otimes_{\rm min}\ga^{G_\om}$. In particular, for any given state
$\om$ on $\ga^{G_\om}$, there exists a unique state $\om\otimes\om$ on $\ga^{G_\om}\overline{\otimes} \ga^{G_\om}$ such that $\om\otimes\om(a\otimes b)=\om(a)\om(b)$ for all $a, b$ in $\ga^{G_\om}$.
 It is now clear how to define the product state, for  the state
$\om\times \om := (\om\otimes\om)\circ F$ has the required properties.
\end{proof}
The characterization given in Theorem \ref{existenceprod} also covers the case of an infinite twisted product. Indeed, with the same argument we have the following.
\begin{cor}
Let $\om$ be a state on $\ga$. There exists a (unique) state $\times\om $ on $\bigotimes_v^\bn\ga$,
such that
$$\times\om(i_{j_1}(a_1)\cdots i_{j_n}(a_n))= \om(a_1)\cdots\om(a_n)\,,$$
for all $n$ in $\bn$, $j_1<\ldots<j_n$ in $\bn$, $a_1, \cdots, a_n$ in $\ga$, if and only if for any
$\eta_1, \eta_2$ in ${\rm supp}_G\,\om $ one has $v(\eta_1, \eta_2)=1$.
\end{cor}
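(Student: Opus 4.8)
The plan is to mirror the proof of Theorem \ref{existenceprod}, replacing the two-site averaging there by a site-wise averaging over infinitely many copies of the annihilator group.

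For the necessity, I would argue that no new work is needed. The two-fold twisted product $\ga\otimes_v\ga$ embeds into $\bigotimes_v^\bn\ga$ as the subalgebra $C^*(i_0(\ga),i_1(\ga))$, and the restriction of $\times\om$ to this subalgebra is exactly the product state $\om\times\om$ of Theorem \ref{existenceprod} (both are states agreeing on the densely spanning elements $i_0(a)i_1(b)\mapsto\om(a)\om(b)$). Hence the existence of $\times\om$ already forces $v(\eta_1,\eta_2)=1$ for all $\eta_1,\eta_2\in{\rm supp}_G\,\om$.

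For the sufficiency, I would first record the consequences of the hypothesis: ${\rm supp}_G\,\om\subset\Delta_v$, and, since $\eta\mapsto v(\eta,\chi)$ and $\eta\mapsto v(\chi,\eta)$ are characters of $\widehat{G}$, the triviality of $v$ on ${\rm supp}_G\,\om$ propagates to the subgroup $H_\om\subset\widehat{G}$ it generates, so that $v\equiv 1$ on $H_\om\times H_\om$. Setting $G_\om:=H_\om^\perp\subset G$, I would introduce the site-wise action $\Gamma$ of the compact group $G_\om^\bn$ on $\bigotimes_v^\bn\ga$ determined by $\Gamma_{(g_k)}(i_k(a))=i_k(\gamma_{g_k}(a))$; this is well defined by universality because each $\gamma_g$ preserves degrees and hence the commutation rules \eqref{commrules}. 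Averaging against the Haar measure of $G_\om^\bn$ yields a conditional expectation $F$ onto the fixed-point algebra, and on any localized element only finitely many sites are involved, so $F$ is a genuine finite average. A character computation then shows that an ordered monomial $i_{j_1}(a_1)\cdots i_{j_n}(a_n)$ with homogeneous factors survives $F$ exactly when each $\partial a_l$ is trivial on $G_\om$, i.e. lies in $G_\om^\perp=H_\om^{\perp\perp}=H_\om$ (the last identity being valid since $\widehat{G}$ is discrete). Thus ${\rm Ran}\,F$ is the closed span of ordered monomials with all factor-degrees in $H_\om$, and because $v\equiv 1$ there the commutation rules collapse to genuine commutativity between distinct sites; accordingly ${\rm Ran}\,F$ is the \emph{untwisted} infinite tensor product $\bigotimes^\bn\ga^{G_\om}$.

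On this untwisted tensor product the infinite product state $\bigotimes_\bn\om_0$ of $\om_0:=\om|_{\ga^{G_\om}}$ exists by the standard theory, and I would define $\times\om:=(\bigotimes_\bn\om_0)\circ F$. Since $F$ acts site-wise as the conditional expectation $E_0(a)=\int_{G_\om}\gamma_g(a)\,d\mu_{G_\om}(g)$ onto $\ga^{G_\om}$, evaluating on an ordered monomial gives $\prod_l\om_0(E_0(a_l))$; and because ${\rm supp}_G\,\om\subset H_\om$ a short spectral computation gives $\om\circ E_0=\om$, whence this product equals $\prod_l\om(a_l)$, as required. Uniqueness is immediate, as the commutation rules let every element be approximated by linear combinations of ordered monomials, on which $\times\om$ is prescribed. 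The step I expect to be the main obstacle is the bookkeeping identifying ${\rm Ran}\,F$ with $\bigotimes^\bn\ga^{G_\om}$, namely checking that the degree constraint imposed by $F$ lands precisely in the region $H_\om\times H_\om$ where $v$ trivialises; once this identification is secured, the analytic points (convergence of the average and existence of the infinite product state) are routine.
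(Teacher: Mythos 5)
Your proposal is correct and follows essentially the same route as the paper: the paper proves the corollary by declaring it follows ``with the same argument'' as Theorem \ref{existenceprod}, and your proof is exactly that argument transplanted to infinitely many sites --- averaging the site-wise action of $G_\om^\bn$ to get a conditional expectation $F$ onto the fixed-point algebra, identifying that algebra (via $G_\om^\perp=H_\om^{\perp\perp}=H_\om$ and the triviality of $v$ on $H_\om\times H_\om$) with an untwisted infinite tensor product of $\ga^{G_\om}$, and setting $\times\om:=(\bigotimes_\bn\om_0)\circ F$, with necessity reduced to the two-site case via the embedding $\ga\otimes_v\ga\cong C^*(i_0(\ga),i_1(\ga))$ furnished by the inductive-limit construction. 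The bookkeeping step you flag as the main obstacle is handled exactly as in the paper's two-site proof, so there is no gap.
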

We denote by $\cs_v(\ga)$ the set all states $\om$ such that the infinite product $\times\om$ exists, namely
$$\cs_v(\ga):=\{\om\in\cs(\ga): v\upharpoonright_{{\rm supp}_G\,\om \times{\rm supp}_G\,\om }=1\}\,.$$
Note that $\cs_v(\ga)$ is a closed subset of $\cs(\ga)$ in the $*$-weak topology.\\

The next result shows that product states are a source of spreadable quantum processes that fail to be 
exchangeable as soon as the bicharacter is not antisymmetric. If the assigned grading on the sample algebra  is ergodic, the resulting situation is quite surprising, in that no exchangeable processes whose distribution
factorizes through the corresponding twisted tensor product exist.

\begin{prop} \label{counterexamples}
Let $p:\ast_\bn\ga\rightarrow \otimes_{v}^\bn\ga$ the quotient projection.
\begin{enumerate}
\item[(i)] Suppose $\exists \chi,\eta\in\hat{G}$ such that $v(\chi,\eta)\neq \overline{v(\eta,\chi)}$ and that $\ga_\chi\neq\{0\}, \ga_\eta\neq\{0\}$. Then $\exists\, \omega\in \cs_v(\ga)$ such that $\times \omega \circ p$ is the distribution of a spreadable but non-exchangeable quantum stochastic process.
\item[(ii)] Suppose the action of $G$ is faithful and topologically ergodic and that the bicharacter $v$ is not antisymmetric. Then, for any $\omega\in\cs_v(\ga)$, $\times \omega \circ p$ is the distribution of a spreadable but non-exchangeable quantum stochastic process. Moreover, the set $\{\varphi\in\cs(\otimes_{v}^\bn\ga): \varphi\circ p \textrm{ is exchangeable}\}$ is empty.
\end{enumerate}
\end{prop}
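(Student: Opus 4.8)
The plan is to prove both parts by exhibiting explicit witnesses of non-exchangeability at the level of two-point functions, and then to leverage the ergodicity hypothesis in part (ii) to kill exchangeability entirely.

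For part (i), I would start by building the state $\om$ out of the two non-trivially commuting characters. Given $\chi,\eta$ with $v(\chi,\eta)\neq\overline{v(\eta,\chi)}$ and nonzero eigenspaces $\ga_\chi,\ga_\eta$, the first step is to produce a single state $\om\in\cs_v(\ga)$ whose spectral support contains both $\chi$ and $\eta$. The obstruction to overcome here is the existence constraint from Theorem \ref{existenceprod}: for $\times\om$ to exist I need $v\!\upharpoonright_{{\rm supp}_G\om\times{\rm supp}_G\om}=1$. So I cannot simply take $\om$ supported on $\{\chi,\eta\}$, because then I would need $v(\chi,\eta)=1$, contradicting the asymmetry I want to exploit. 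The resolution is to pass to the \emph{symmetrized} bicharacter: what actually detects non-exchangeability is $v_S(\chi,\eta)=v(\chi,\eta)v(\eta,\chi)$, and the hypothesis $v(\chi,\eta)\neq\overline{v(\eta,\chi)}$ says exactly $v_S(\chi,\eta)\neq 1$. Thus I want a state whose support forces the commutation constant $v_S(\chi,\eta)$ to appear. The cleanest route is to pick $a\in\ga_\chi$, $b\in\ga_\eta$ with $\om(a),\om(b)\neq 0$ living in a \emph{single} character class each, so that the support is a small set $S\subset\widehat G$ with $\chi,\eta\in S$ and $v\!\upharpoonright_{S\times S}=1$; here one has freedom to average $\om$ over the group to shrink its support while retaining nonvanishing on the relevant homogeneous elements. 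Once $\om\in\cs_v(\ga)$ is in hand, I compute, using the defining product property and the commutation rule \eqref{commrules},
\begin{equation*}
\times\om\big(i_0(a)i_1(b)\big)=\om(a)\om(b),\qquad
\times\om\big(i_1(a)i_0(b)\big)=v(\partial a,\partial b)\,\om(a)\om(b),
\end{equation*}
and more symmetrically $\times\om(i_0(b)i_1(a))=\om(a)\om(b)$ versus the reordered value carrying the extra factor $v_S(\chi,\eta)$. Since $v_S(\chi,\eta)\neq 1$ and $\om(a)\om(b)\neq 0$, the two monomials related by the transposition of indices $0,1$ receive different values, so $\times\om\circ p$ is not exchangeable. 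Spreadability is automatic: the infinite product state is manifestly invariant under the monotone shifts $\theta_h$ because reindexing the sites by an increasing map preserves both the product structure and the order in which the commutation constants are applied.

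For part (ii), the hypotheses are stronger: the action is faithful and topologically ergodic, and $v$ is merely \emph{not antisymmetric}. Faithfulness plus topological ergodicity let me invoke Proposition \ref{fullspectrum}, so $V_\chi\neq\{0\}$ for \emph{every} $\chi\in\widehat G$; moreover each $V_\chi$ is one-dimensional, spanned by a unitary. The first step is to observe that under ergodicity any state $\om$ automatically has full spectral support on the subgroup generated by its support, but more importantly any $\om\in\cs_v(\ga)$ already produces the non-exchangeability of part (i): since $v$ is not antisymmetric there exist $\chi,\eta$ with $v_S(\chi,\eta)\neq 1$, and because all eigenspaces are nonzero one can realize the two-point computation above inside the range of $\om$. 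The genuinely new claim is that the set $\{\varphi\in\cs(\otimes_v^\bn\ga):\varphi\circ p\text{ is exchangeable}\}$ is \emph{empty}. The strategy is a direct contradiction: suppose $\varphi$ were such an exchangeable state. Exchangeability forces $\varphi$ to be invariant under the transposition of sites $0$ and $1$, hence
\begin{equation*}
\varphi\big(i_0(a)i_1(b)\big)=\varphi\big(i_1(a)i_0(b)\big)=v(\partial a,\partial b)\,\varphi\big(i_0(b)i_1(a)\big)
\end{equation*}
for homogeneous $a,b$, where I used \eqref{commrules} to move $i_1(a)$ past $i_0(b)$. Iterating the transposition symmetry once more to swap the roles of $a$ and $b$ yields a closed relation that, after combining the two passes, multiplies $\varphi(i_0(a)i_1(b))$ by the symmetrization factor $v_S(\partial a,\partial b)$.

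Concretely, the main obstacle—and the crux of the argument—is to show these constraints force $\varphi$ to vanish on enough homogeneous pairs to contradict $\varphi(1)=1$. Here is where full spectrum is essential. For each $\chi\in\widehat G$ fix a unitary generator $u_\chi\in V_\chi$ (possible by Proposition \ref{fullspectrum} and one-dimensionality). Exchangeability combined with the commutation rule gives, for all $\chi,\eta$,
\begin{equation*}
\varphi\big(i_0(u_\chi)i_1(u_\eta)\big)=v_S(\chi,\eta)\,\varphi\big(i_0(u_\chi)i_1(u_\eta)\big),
\end{equation*}
so whenever $v_S(\chi,\eta)\neq 1$ the two-point function $\varphi(i_0(u_\chi)i_1(u_\eta))$ must be zero. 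The plan's final step is to propagate this vanishing to the diagonal $\eta=\chi^{-1}$: I want to reach a pair $(\chi,\eta)$ with $u_\chi u_\eta$ a nonzero multiple of the identity (namely $\eta=\chi^{-1}$, where $u_\chi u_{\chi^{-1}}\in V_{e}=\bc 1$) for which $v_S(\chi,\chi^{-1})\neq 1$, since then the vanishing of $\varphi(i_0(u_\chi)i_1(u_{\chi^{-1}}))$ together with the tracial-type identity coming from sliding $i_1(u_{\chi^{-1}})$ back to site $0$ would force $\varphi$ to vanish on a scalar, the contradiction. The subtlety I expect to fight is that $v_S$ may be trivial precisely on such diagonal pairs; the way around it is to use that $v$ not antisymmetric means $v_S$ is a nontrivial \emph{symmetric} bicharacter, and to exploit bicharacter bilinearity $v_S(\chi,\eta\zeta)=v_S(\chi,\eta)v_S(\chi,\zeta)$ to manufacture, from any single pair with $v_S\neq 1$, a pair whose product lands in the isotropy subgroup where the scalar collapse can be triggered. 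Carrying this bilinear bookkeeping through to a clean contradiction is the delicate part of the proof.
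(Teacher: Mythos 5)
Your plan breaks down at the very first step of part (i), and the failure is structural rather than a matter of bookkeeping. You want to witness non-exchangeability on two-point functions, which requires a state $\om\in\cs_v(\ga)$ with $\om(a)\neq 0$ and $\om(b)\neq 0$ for some $a\in\ga_\chi$, $b\in\ga_\eta$, i.e.\ with $\chi,\eta\in{\rm supp}_G\,\om$. But by Theorem \ref{existenceprod}, the existence of $\times\om$ forces $v(\chi,\eta)=v(\eta,\chi)=1$ on pairs from the support, hence $v_S(\chi,\eta)=1$, which is exactly the hypothesis you need to violate. Your proposed resolution is self-contradictory: a set $S\supseteq\{\chi,\eta\}$ with $v\upharpoonright_{S\times S}=1$ cannot exist when $v_S(\chi,\eta)\neq 1$, and ``averaging $\om$ over the group'' does the opposite of what you claim --- it shrinks ${\rm supp}_G\,\om$ to the trivial character precisely by killing $\om$ on every homogeneous element of nontrivial degree, so nonvanishing on $\ga_\chi$ cannot be retained. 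The conclusion to draw is that two-point functions of a product state can \emph{never} detect the asymmetry of $v$. The paper's way out is to use four-point monomials: take $\om:=\check\om\circ E_G$, where $\check\om$ is a Segal extension of a faithful state on $C^*(a_\chi,a_\eta)$ and $E_G$ is the canonical conditional expectation; then $\om$ is $G$-invariant (so $\om\in\cs_v(\ga)$ trivially), yet nonzero on the degree-zero elements $a_\chi a_\chi^*,\,a_\eta a_\eta^*\in\ga^G$. The commutation rules give $i_1(a_\chi)i_2(a_\eta)i_1(a_\chi^*)i_2(a_\eta^*)=v(\chi,\eta)\,i_1(a_\chi a_\chi^*)i_2(a_\eta a_\eta^*)$, while the transposed monomial carries $\overline{v(\eta,\chi)}$ instead; both multiply the same nonzero number $\om(a_\chi a_\chi^*)\om(a_\eta a_\eta^*)$, and non-exchangeability follows.

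The same gap propagates into part (ii). Your observation that exchangeability forces $\varphi\big(i_0(u_\chi)i_1(u_\eta)\big)=\overline{v_S(\chi,\eta)}\,\varphi\big(i_0(u_\chi)i_1(u_\eta)\big)$, hence vanishing whenever $v_S(\chi,\eta)\neq 1$, is correct but leads nowhere: this vanishing is perfectly consistent (the product of any $G$-invariant state satisfies it), so no contradiction with $\varphi(1)=1$ can ever come from two-point data, no matter how the ``bilinear bookkeeping'' is arranged. Moreover, the step of ``sliding $i_1(u_{\chi^{-1}})$ back to site $0$'' is not a legitimate operation: the relations \eqref{commrules} reorder tensors sitting at distinct sites, they never merge two sites into one, so $i_0(u_\chi)i_1(u_{\chi^{-1}})$ is not a scalar and its vanishing under $\varphi$ contradicts nothing. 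The idea you are missing is that, with \emph{unitaries} $U_\chi\in V_\chi$, $U_\eta\in V_\eta$ (available by Proposition \ref{fullspectrum}), the paper's four-point monomials collapse to scalars inside $\otimes_v^\bn\ga$: one has $i_1(U_\chi)i_2(U_\eta)i_1(U_\chi^*)i_2(U_\eta^*)=v(\chi,\eta)\,1$ and $i_2(U_\chi)i_1(U_\eta)i_2(U_\chi^*)i_1(U_\eta^*)=\overline{v(\eta,\chi)}\,1$. Hence \emph{every} state $\varphi$ on $\otimes_v^\bn\ga$ assigns these two elements the distinct values $v(\chi,\eta)$ and $\overline{v(\eta,\chi)}$, while the corresponding monomials upstairs in $\ast_\bn\ga$ are exchanged by the transposition of the indices $1,2$. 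This simultaneously shows that every $\times\om$, $\om\in\cs_v(\ga)$, is non-exchangeable and that the set of states pulling back to exchangeable ones is empty --- the emptiness claim that your approach cannot reach.
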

\begin{proof}
To prove (i), let $a_\chi,a_\eta$ be nonzero elements of $\ga_\chi,\ga_\eta$ respectively and let $C^*(a_\chi,a_\eta)$ the unital, separable $C^*$-algebra generated by $a_\chi,a_\eta$. Let $\widetilde{\omega}$ be a faithful state on  $C^*(a_\chi,a_\eta)$ and $\check{\omega}\in\cs(\ga)$ be an extension of $\widetilde{\omega}$ which exists by Segal's extension theorem. Finally, let $\omega:=\check{\omega}\circ E_G$ where $E_G$ is the canonical conditional expectation given by the action of $G$. Note that $\omega\in\cs_v(\ga)$ and that $\omega(a_\chi a^*_\chi)=\widetilde{\omega}(a_\chi a^*_\chi)\neq 0$ and $\omega(a_\eta a^*_\eta)=\widetilde{\omega}(a_\eta a^*_\eta)\neq 0$ since $a_\chi a^*_\chi,a_\eta a^*_\eta\in\ga^G$.  We have
$$\times\omega(i_1(a_\chi)i_2(a_\eta)i_1(a^*_\chi)i_2(a^*_\eta))=v(\chi,\eta)\omega(a_\chi a^*_\chi)\omega(a_\eta a^*_\eta),$$
while
$$\times\omega(i_2(a_\chi)i_1(a_\eta)i_2(a^*_\chi)i_1(a^*_\eta))=\overline{v(\eta,\chi)}\omega(a_\chi a^*_\chi)\omega(a_\eta a^*_\eta).$$
and thus $\times\omega$ is not exchangeable.
\\
To prove (ii), since $v$ is not antisymmetric, we can pick $\chi,\eta\in \hat{G}$ such that $v(\chi,\eta)\neq \overline{v(\eta,\chi)}$ and by the proof of Proposition \ref{fullspectrum} we have that there are unitaries $U_\chi\in\ga_\chi, U_\eta\in\ga_\eta$. Thus, for any $\omega\in\cs_v(\ga)$, we can compute
$$\times \omega(i_1(U_\chi)i_2(U_\eta)i_1(U^*_\chi)i_2(U^*_\eta))=v(\chi,\eta),$$
while
$$\times \omega(i_2(U_\chi)i_1(U_\eta)i_2(U^*_\chi)i_1(U^*_\eta))=\overline{v(\eta,\chi)}.$$
Finally, the last part follows by completely analogous computations.
\end{proof}

\begin{rem}
A class of examples satisfying (ii) of Proposition \ref{counterexamples} is obtained by taking $\ga:=C(G)$ with natural action of $G$ and $v:\hat{G}\times\hat{G}\rightarrow\bt$ any bicharacter which is not antisymmetric.
\end{rem}

Working in the formalism provided by von Neumann algebras,  Gohm and  K\"{o}stler developed in \cite{GK08}
an approach to a new  symmetry which they call braidability, see \cite[Definition 0.1]{GK08}, and  which they show to be 
intermediate between exchangeability and spreadability, see \cite[Theorem 0.2]{GK08}.
The intuition behind their approach is that the role played by the symmetric group and its natural actions should in the non-commutative realm be taken by the braid group. Bearing this in mind, it is then rather natural to ask
whether a theorem \`a la Ryll-Nardzewski may hold in which  braidability is substituted for exchangeability. This is exactly the question that  Evans, Gohm and  K\"{o}stler raise in \cite{EGK17}, namely  if spreadable processes are the same as braidable processes.\\
Strictly speaking, 
 the notion of braidability as given in \cite{GK08} is not a distributional symmetry as this is usually understood, in that
 the braid group does not act on distributions but rather on the non-commutative probability space where the process is realized.\\
In the $C^*$-algebraic formalism the present paper is set in, the distribution of a process is 
essentially a state on a suitable quotient of the infinite free product of the sample algebra, {\it cf.} Definition \ref{distri}.
Importantly, the symmetry group/semigroup may act on the quotient even if it does not act naturally on the free product, thus acting by duality on the states of
the quotient algebra as well (i.e. acting on distributions). A case in point illustrating this scenario even in the framework of Classical Probability is provided by 
rotatability, with the orthogonal group naturally  acting on  the tensor products of the unitalization of $C_0(\br)$,
while  failing to do so on the associated free products. Another, in the non-commutative setting, is again provided by
rotatability, which makes perfect sense for states on the CAR algebra by second quantization, although no natural actions of the
orthogonal/unitary group can be considered on the infinite free product of $\mathbb{M}_2(\bc)$ with itself.\\
That said, we believe the following is a natural formulation of what ought to be meant by braidability, as a distributional symmetry,  in the $C^*$-algebra formalism we work in.
Before we give our definition, we first recall what braid group we are dealing with.\\
Let  
$$
\bb_\bn :=
\left\langle \sigma_1, \sigma_2, \dots  \, | \, \begin{array}{l}
\sigma_i \sigma_j = \sigma_j \sigma_i \text{ for } |i-j|>2\\
\sigma_i \sigma_{i+1}\sigma_i = \sigma_{i+1} \sigma_i \sigma_{i+1} \text{ for all } i\geq 1
\end{array}
\right\rangle
$$
be  Artin's braid group on $\bn$. Note that $\bb_\bn$ projects onto $\bp_\bn$.
The second  group of defining relations of Artin's group is sometimes referred to as the
Yang-Baxter relation.\\

\begin{defin}\label{Cbraid}
Let $I$ be a  closed two-sided ideal of $\ast_\bn\ga$.
A quantum stochastic process $\big(\ga,\ch,\{\iota_j\}_{j\in \bn},\xi\big)$ whose distribution factorizes through the quotient
$\ast_\bn\ga/I$ (see Definition \ref{distri}) is said to be $I$-\emph{braidable} if there exists an action of the braid group, $\rho: \mathbb{B}_\bn\to \text{Aut}(\ast_\bn\ga/I)$, which leaves the state on $\ast_\bn\ga/I$ induced by $\xi$ invariant and such that the following properties hold
\begin{align}
i_n' &= \rho(\sigma_n \sigma_{n-1} \cdots \sigma_1)\circ i_0' \qquad \text{for all } n \geq 1\label{braid1} \\
i_0' &= \rho(\sigma_n)\circ i_0' \qquad \text{if } n \geq 2\,, \label{braid2}
\end{align}
where $i_n':= p_I\circ i_n$.
\end{defin}

A key property of the above definition is that the braid group is supposed to act directly on the quotient
$\ast_\bn\ga/I$, without assuming the existence of an action on the free product compatible with the ideal $I$. Interesting examples of braidable states in the above sense are studied in \cite{BJLW}, where the quotient
is the so-called parafermion algebra $PF_\infty$, see also Section \ref{parafermion} of the present paper.\\
As one can imagine, $I$-braidability implies spreadability, as can be proved by following the lines of \cite[Theorem 2.2]{GK08}.
Furthermore, when $I$ is compatible with the natural action of $\bp_\bn$,  exchangeability clearly implies $I$-braidability.
In particular, under the hypotheses of Theorem \ref{mainRyll}, spreadability, $I$-braidability and exchangeability are the same.\\
 One then might wonder if under suitable working hypotheses spreadability and braidability are still the same even when $I$ fails to be compatible with the permutation action.\\
The next result shows that this may not be the case. Indeed, 
what we do is  we provide a counterexample where the quotient is a twisted product, which thus fulfills the hypothesis $(ii)$ of Theorem \ref{mainRyll}
 and  is naturally acted upon by $\bj_\bn$.\\
To discuss our counterexample, we first establish some notation.
 Denote by $\mathbb{A}_\alpha$ the non-commutative $2$-torus with deformation parameter $\alpha\in \br$, namely the universal $C^*$-algebra generated by two unitaries $u,w$ such that  $u w=e^{2\pi i\alpha}wu$. Denote by ${\rm tr}$ the canonical trace on $\mathbb{A}_\alpha$, namely the state determined by ${\rm tr}(u^iw^j)=\delta_{i,0}\delta_{j,0}$.
Let $\bigotimes_v^{\bn }\mathbb{A}_\alpha$ be the twisted tensor product, where $G:=\bt^2$ acts by rotations on $\mathbb{A}_\alpha$, with bicharacter $v:\bz^2\times\bz^2\rightarrow \bt$, and let $\times {\rm tr}$ be its state obtained as infinite product of ${\rm tr}$ with itself ($\times {\rm tr}$ is still a trace).
Clearly, $\bigotimes_v^{\bn }\mathbb{A}_\alpha$ is a quotient of $\ast_\bn\mathbb{A}_\alpha$ by the ideal $I_v$ generated by  elements
of the form $i_l(a)i_k(b)- v( \deg(a),\deg(b))i_k(b)i_l(a)$ with $a, b$  being homogeneous and $l, k$ varying in $\bn$ with $l<k$.
 We simply denote by $p$ the corresponding quotient map.

\begin{thm}\label{counterexamplebraid}
Let $v$ be the bicharacter given by  $v(k, l) := e^{2\pi i(\theta k_1l_1+\theta  k_2l_2)}$, $k=(k_1, k_2), l=(l_1, l_2)$ in $\bz^2$, 
and let $\theta $, 
$1$, $\alpha$ be rationally independent. Then $\times {\rm tr}\circ p$ is the distribution of a quantum stochastic process factorizing
through the quotient $\ast_\bn \mathbb{A}_\alpha/I_v$  which is spreadable but not $I_v$-braidable.
 \end{thm}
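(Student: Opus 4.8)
The plan is to prove the two halves separately, spreadability being routine and non-braidability the real content. For spreadability, note that ${\rm tr}(u^iw^j)=\delta_{i,0}\delta_{j,0}$ forces ${\rm supp}_G\,{\rm tr}$ to be trivial, so ${\rm tr}$ is $G$-invariant, hence ${\rm tr}\in\cs_v(\mathbb{A}_\alpha)$; thus $\times{\rm tr}$ exists and $\times{\rm tr}\circ p$ is the distribution of a spreadable process (product states are spreadable; cf. Proposition \ref{counterexamples}). Everything below is devoted to showing this process is not braidable, arguing by contradiction.

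As setup I would write $U_j:=\iota_j(u)$ and $W_j:=\iota_j(w)$; since $\partial u=(1,0)$ and $\partial w=(0,1)$, the rules \eqref{commrules} read $U_iU_j=e^{2\pi i\theta}U_jU_i$ and $W_iW_j=e^{2\pi i\theta}W_jW_i$ for $i<j$, together with $U_iW_j=W_jU_i$ for $i\neq j$ and $U_jW_j=e^{2\pi i\alpha}W_jU_j$. The product state $\times{\rm tr}$ is a faithful trace: it equals $1$ on the unit and $0$ on every nontrivial canonical monomial $u_g=\prod_nU_n^{a_n}W_n^{b_n}$ (indexed by finitely supported exponents). Hence on its GNS space the $\{u_g\}$ form an orthonormal basis, a $\times{\rm tr}$-preserving automorphism extends to the ambient tracial von Neumann algebra $\mathcal{M}$, and conjugation by $U_k$ multiplies $u_g$ by $\exp\!\big(2\pi i[\theta\sum_{n\neq k}\sgn(n-k)a_n+\alpha b_k]\big)$, conjugation by $W_k$ by $\exp\!\big(2\pi i[\theta\sum_{n\neq k}\sgn(n-k)b_n-\alpha a_k]\big)$. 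The idea is to expand images of unitaries in the basis $\{u_g\}$ and read off their support from such conjugation identities.

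Now suppose a braiding $\rho:\bb_\bn\to{\rm Aut}(\mathcal{M})$ exists and put $\gamma_n:=\rho(\sigma_n)$. First I would show that $\gamma_1$ fixes $\iota_j$ for every $j\geq2$: by \eqref{braid1}, $\gamma_1\iota_j=\rho(\sigma_1\sigma_j\sigma_{j-1}\cdots\sigma_1)\iota_0$, and commuting $\sigma_1$ past $\sigma_3,\dots,\sigma_j$ and using $\sigma_1\sigma_2\sigma_1=\sigma_2\sigma_1\sigma_2$ rewrites this as $\rho(\sigma_j\cdots\sigma_1)\rho(\sigma_2)\iota_0=\iota_j$, the last equality by \eqref{braid2}. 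Since moreover $\gamma_1\iota_0=\iota_1$, the only undetermined datum is the unitary $X:=\gamma_1(U_1)$, which I propose to pin down by applying the automorphism $\gamma_1$ to the commutation relations satisfied by $U_1$.

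Applying $\gamma_1$ to $U_1U_k=e^{2\pi i\theta}U_kU_1$ and to $U_1W_k=W_kU_1$ for $k\geq2$ (where $\gamma_1$ is the identity) gives $XU_k=e^{2\pi i\theta}U_kX$ and $XW_k=W_kX$; reading these as conjugation-eigenvalue equations and invoking the rational independence of $1,\theta,\alpha$ forces $a_k=b_k=0$ for all $k\geq2$ on the support of $X$ (so $X$ is supported on sites $\{0,1\}$) and, from $k=2$, $a_0+a_1=1$. Applying $\gamma_1$ to $U_0U_1=e^{2\pi i\theta}U_1U_0$ gives $U_1X=e^{2\pi i\theta}XU_1$, whence $a_0=-1$; applying it to $W_0U_1=U_1W_0$ gives $W_1X=XW_1$, whence $a_1=0$. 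These three constraints are incompatible, so the support of $X$ is empty and $X=0$, contradicting unitarity; hence no braiding exists. The step I expect to be most delicate is the Yang--Baxter computation localizing $\gamma_1$ off the sites $j\geq2$ — precisely where braidability exceeds mere spreadability — after which rational independence converts each relation into an exact affine-integer equation and the clash $a_0+a_1=1$ versus $a_0=-1,\ a_1=0$ is immediate.
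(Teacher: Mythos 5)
Your proof is correct and follows essentially the same route as the paper's: localize the braiding via the Yang--Baxter relation so that only $X=\gamma_1(U_1)$ is undetermined, expand $X\xi$ in the orthonormal basis of monomials of the trace GNS space, and let rational independence of $1,\theta,\alpha$ convert the conjugation identities into integer constraints on the exponents that are mutually incompatible, forcing $X\xi=0$. The only cosmetic difference is that you eliminate the exponents $a_k$, $k\geq 2$, via the relation $XW_k=W_kX$, whereas the paper subtracts the constraints coming from consecutive $U_k$-conjugations; the substance is identical.
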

\begin{proof}
We will argue by contradiction  by supposing that
there does exist an automorphic action $\rho$ of the braid group $\mathbb{B}_\bn$ on
$\pi_{\times{\rm tr}}(\bigotimes_v^{\bn }\mathbb{A}_\alpha)$ as in Definition \ref{Cbraid} (note that
$\pi_{\times{\rm tr}}(\bigotimes_v^{\bn }\mathbb{A}_\alpha)$ is isomorphic with 
$\bigotimes_v^{\bn }\mathbb{A}_\alpha$ as $\times{\rm tr}$ is faithful).\\
To ease the notation, the automorphims $\rho(\sigma_j)$ will be simply
denoted by $\sigma_j$ in what follows.\\
We first show that Definition \ref{Cbraid} completely determines $\sigma_k \circ \iota_j$ when $k\neq j$. Indeed, from the relations \eqref{braid1}, \eqref{braid2} it follows easily that 
$\sigma_k \circ \iota_j=\iota_j$ when $k\geq j+2$. From relation \eqref{braid1}, $\sigma_k \circ \iota_{k-1}=\iota_{k}$. Finally, from the Yang-Baxter relation of the braid group and again from relations  \eqref{braid1}, \eqref{braid2} , for $k\leq j-1$ we get
\begin{align*}
\sigma_k \circ \iota_j &=\sigma_k \circ \sigma_j\circ \sigma_{j-1}\circ\cdots \circ \sigma_1\circ\iota_0\\
&=\sigma_j\circ \sigma_{j-1}\circ\cdots\circ\sigma_{k+2}\circ\sigma_{k}\circ \sigma_{k+1}\circ \sigma_{k} \circ \cdots\circ\sigma_1\circ \iota_0\\
&=\sigma_j\circ \sigma_{j-1}\circ\cdots\circ\sigma_{k+2}\circ\sigma_{k+1}\circ \sigma_{k}\circ \sigma_{k+1} \circ \cdots\circ\sigma_1\circ \iota_0=\iota_j.
\end{align*}
Thus it only remains to find the possible values of $\sigma_{j} \circ \iota_{j}$.\\
We denote by $u_l=\iota_l(u), v_l=\iota_l(w)$ the generators of $\iota_l(\mathbb{A}_\alpha)$ with $u_l v_l=e^{2\pi i\alpha}v_lu_l$.
Note that $u_l v_j=v_j u_l$ for $l\neq j$.\\
Denote by $\xi$ the GNS vector of the GNS representation of $\bigotimes_v^{\bn}\mathbb{A}_\alpha$ w.r.t. $\times {\rm tr}$ and by
 $\ch$ 
 the corresponding Hilbert space.\\
Note that the countable subset of $\ch$   
$$\{\xi\}\cup\{u_{i_1}^{j_{i_1}}v_{i_1}^{q_{i_1}}\cdots u_{i_k}^{j_{i_k}}v_{i_k}^{q_{i_k}}\xi: k\in\bn,  i_1<\ldots<i_k, j_{i_1}, \ldots, j_{i_k}, q_{i_1},\ldots, q_{i_{k}}\in\bz\}$$
provides an orthonormal basis of $\ch$. For convenience, we adopt the convention that, for every $l=1, 2, \ldots, k$,
at least one between the exponents $j_{i_l}$ and $q_{i_l}$ is different from zero.\\
Consider the vector $\sigma_1(u_1)\xi$:  since it is different from zero (and $\s_1(u_1)\xi\neq \xi$ as well since $\xi$ is a separating vector), we can safely suppose
that 
$$\langle \sigma_1(u_1)\xi, u_{i_1}^{j_{i_1}}v_{i_1}^{q_{i_1}}\cdots u_{i_k}^{j_{i_k}}v_{i_k}^{q_{i_k}}\xi \rangle \neq 0\,, $$
for a fixed basis vector $u_{i_1}^{j_{i_1}}v_{i_1}^{q_{i_1}}\cdots u_{i_k}^{j_{i_k}}v_{i_k}^{q_{i_k}}\xi$.\\
More precisely, we claim that the only non-zero Fourier coefficients  of $\sigma_1(u_1)\xi$ are those corresponding $\xi$ or to basis vectors 
with $i_k=0$ or $i_k=1$.\\
We can show this by contradiction. Suppose $\langle \sigma_1(u_1)\xi, u_{i_1}^{j_{i_1}}v_{i_1}^{q_{i_1}}\cdots u_{i_k}^{j_{i_k}}v_{i_k}^{q_{i_k}}\xi \rangle \neq 0$ for some basis vector with $i_k\geq 2$ (we recall that
$i_0<i_1<\ldots<i_k$ and
at least one between $j_{i_k}$ and $q_{i_k}$ is  different from zero).
Note that for all $r\geq 2$
the following relations hold:
$$\sigma_1(u_ru_1)= u_r\sigma_1(u_1)=e^{-2 \pi i\theta } \sigma_1(u_1u_r)= e^{-2\pi i\theta }\sigma_1(u_1)u_r,$$
 that is 
\begin{equation}\label{rel3}
\sigma_1(u_1)=e^{2\pi i\theta } u_r\sigma_1(u_1)u_r^*.
\end{equation}
In particular, we must have
\begin{align}\label{equalcoeff}
&\langle \sigma_1(u_1)\xi, u_{i_1}^{j_{i_1}}v_{i_1}^{q_{i_1}}\cdots u_{i_k}^{j_{i_k}}v_{i_k}^{q_{i_k}}\xi\rangle=\nonumber \\ 
&\langle e^{2\pi i\theta } u_r\sigma_1(u_1)u_r^*\xi, u_{i_1}^{j_{i_1}}v_{i_1}^{q_{i_1}}\cdots u_{i_k}^{j_{i_k}}v_{i_k}^{q_{i_k}}\xi\rangle\, .
\end{align}
On the other hand, we also have
$$u_r^*u_{i_1}^{j_{i_1}}v_{i_1}^{q_{i_1}}\cdots u_{i_k}^{j_{i_k}}v_{i_k}^{q_{i_k}}u_r=e^{-i 2\pi\theta  (\sum_{i_p<r} (-j_{i_p}) + \sum_{i_p>r}j_{i_p})- 2 \pi i \alpha q_r} u_{i_1}^{j_{i_1}}v_{i_1}^{q_{i_1}}\cdots u_{i_k}^{j_{i_k}}v_{i_k}^{q_{i_k}}\,.$$ 
The expression in \eqref{equalcoeff} can be calculated explicitly by means of the above equality and exploting the trace property of the vector state associated with $\xi$, yielding:
\begin{align*}
&\langle \sigma_1(u_1)\xi, u_{i_1}^{j_{i_1}}v_{i_1}^{q_{i_1}}\cdots u_{i_k}^{j_{i_k}}v_{i_k}^{q_{i_k}}\xi \rangle=\\
&\langle \sigma_1(u_1)\xi, u_{i_1}^{j_{i_1}}v_{i_1}^{q_{i_1}}\cdots u_{i_k}^{j_{i_k}}v_{i_k}^{q_{i_k}}\xi \rangle e^{ 2\pi i\theta  (\sum_{i_p<r} (-j_{i_p}) + \sum_{i_p>r}j_{i_p})+2 \pi  i\alpha q_r+2\pi i \theta}\,,
\end{align*}
hence
\begin{align}\label{i_r}
&\theta (\sum_{i_p<r}(-j_{i_p}) + \sum_{i_p>r}j_{i_p}) + \alpha q_r=-\theta + \ell,
\end{align}
for some $\ell\in\bz$, which, by rational independence, implies $q_r=0$ and
\begin{align}\label{i_42}
&\sum_{i_p<r}(-j_{i_p}) + \sum_{i_p>r}j_{i_p}=-1.
\end{align}
 Subtracting relation \eqref{i_42} with $r=i_{k}+1$ and $r={i_k}$, we get
\begin{align*}
&0=\sum_{i_p<{i_{k}+1}}(-j_{i_p}) + \sum_{i_p>i_{k}+1}j_{i_p}- \left( \sum_{i_p<i_k}(-j_{i_p}) + \sum_{i_p>i_k}j_{i_p} \right)\\
&=\sum_{i_p<{i_{k}+1}}(-j_{i_p})  -  \sum_{i_p<i_k}(-j_{i_p})=- j_{i_k}
\end{align*}
hence $j_{i_k}=0$, which is  absurd as $q_{i_k}=0$ as well, and at the beginning we assumed that not both exponents vanished. This ends the proof of the claim.\\
Before going on, note that $\xi$ and vectors of the basis with $i_k=0, 1$ can also be collectively obtained as $u_0^{j_0}v_0^{q_0}u_1^{j_1}v_1^{q_1}\xi$, if one allows the exponents $j_0, q_0, j_1, q_1$ to vanish, which is what we do to unify the following calculations.\\
Similarly to what we saw above, with   
$u_1\sigma_1(u_1)=\sigma_1(u_0u_1)=e^{i2\pi\theta } \sigma_1(u_1u_0)= e^{i2\pi\theta }\sigma_1(u_1)u_1$ we get 
$\sigma_1(u_1)=e^{-2\pi i\theta } u_1\sigma_1(u_1)u_1^*$, which, as before, implies the following equality involving the exponents $j_l$ and $q_{l}$, $l=0, 1$, of the vectors $u_0^{j_0}v_0^{q_0}u_1^{j_1}v_1^{q_1}\xi$ with $\langle \sigma_1(u_1)\xi, u_0^{j_0}v_0^{q_0}u_1^{j_1}v_1^{q_1}\xi \rangle\neq 0$:
\begin{align}\label{i_2bis}
&-\theta j_{0} + q_1\alpha=\theta +\ell
\end{align}
for $\ell\in\bz$.  In order for Equation \eqref{i_2bis} to hold, we must have   $q_1=0$, 
and $j_0=-1$.  Similarly, by \eqref{rel3}  for any $r\geq 2$ we have 
 $$\langle \sigma_1(u_1)\xi, u_0^{-1}v_0^{q_0}u_1^{j_1}\xi \rangle=e^{2\pi i \theta}\langle u_r \sigma_1(u_1)u_r^*\xi, u_0^{-1}v_0^{q_0}u_1^{j_1}\xi \rangle\,,$$
which, proceeding analogously as before,  gives  $j_1=2$.\\
Finally, consider 
$ v_1\sigma_1(u_1)=\sigma_1(v_0u_1)= \sigma_1(u_1v_0)=\sigma_1(u_1)v_1$, that is 
$\sigma_1(u_1)=v_1\sigma_1(u_1)v_1^*$. As before, equating $\langle \sigma_1(u_1)\xi, u_0^{-1}v_0^{q_0}u_1^{j_1}\xi  \rangle$ and
 $\langle v_1\sigma_1(u_1)v_1^*\xi, u_0^{-1}v_0^{q_0}u_1^{j_1}\xi \rangle$,
we find
\begin{align}\label{i_3bis}
&-\theta q_0 -j_1\alpha=\ell\,,
\end{align}
for $\ell\in\bz$. Rational independence   implies $j_1=0$ and this is a contradiction, which shows that it is not possible to define $\sigma_1$ in a way that  makes it compatible with Relations \eqref{braid1}-\eqref{braid2} and making $\times \rm{tr}$ invariant.
\end{proof}
\begin{rem}\label{remark_GK}
The above result   shows the existence of a spreadable state (process) which is not braidable  in the sense of 
 Gohm and  K\"{o}stler, {\it cf.} Definition \cite[0.1]{GK08}, at least if their definition is to be taken strictly. Indeed, it is enough to consider $\pi_{\times{\rm tr}}(\bigotimes_v^{\bn }\mathbb{A}_\alpha)''$,  the 
von Neumann algebra generated by the GNS representation of $\times {\rm tr}$. Since
$\xi_{\times {\rm tr}}$ is a separating vector for that von Neumann algebra (because $\times {\rm tr}$ is a trace, and traces have central support),
the proof above carries over showing that no action of the braid group can be given on $\pi_{\times{\rm tr}}(\bigotimes_v^{\bn }\mathbb{A}_\alpha)''$  
making the faithful vector state associated with $\xi_{\times{\rm tr}}$ braidable. 
Nevertheless, our result does not rule out the 
existence of a braidable process (in  Gohm and  K\"{o}stler's sense)
which has the same distribution as the process dealt with in Theorem \ref{counterexamplebraid}. \\ 
For these reasons Theorem \ref{counterexamplebraid} may be seen as a first insight into the full solution
to the problem raised in \cite{EGK17} whether spreadability is equivalent to the braidability in the sense of
 Gohm and  K\"{o}stler.

\end{rem}


%

\begin{rem}
On the infinite non-commutative torus $\mathbb{A}_\theta^\bn$, that is the universal $C^*$-algebra generated by a countable set of unitaries $\{u_j: j\in\bn\}$ such that $u_ju_k=e^{2\pi i\theta}u_ku_j$ for all $j<k$, with $\theta$ irrational (note that $\mathbb{A}_\theta^\bn\cong \ast_\bn C(\bt)/ I_\theta$, with $I_\theta$ being the two-sided ideal corresponding to the commutation rules of the infinite-torus) there is only one   automorphic action
of $\mathbb{B}_\bn$ making the quadruple $(C(\bt),  \{\iota_j: j\in\bn\}, \ch, \xi)$ $I_\theta$-braidable, where $\ch$ is the GNS Hilbert space of the canonical trace of $\mathbb{A}_\theta^\bn$ , $\xi$ its GNS cyclic vector, and $\iota_j$ the composition of the embedding $i_j$ of $C(\bt)$ into $\mathbb{A}_\theta^\bn$ with the GNS representation itself.\\
This can be seen as follows. Let us denote by $u_j$ is the image of $f_0(z)=z$, $z$ in $\bt$, under the $j$-th embedding $i_j$. First of all,
arguing as in the part of the proof of Theorem \ref{counterexamplebraid} where we addressed the Fourier expansion of
$\sigma_1(u_1)\xi$ and taking into account that the GNS vector $\xi$ is separating for the von Neumann algebra generated  in the GNS representation of the trace, we  see that $\sigma_1(u_1)$ equals $u_0^*u_1^2$ up to a phase, say $\lambda$. From the braid relations it then follows that $\sigma_i(u_{i-1})=u_i$, $\sigma_i(u_i)=\lambda u_{i-1}^*u_i^2$ for all $i$ in $\bn$, and  $\sigma_i(u_j)= u_j$ for $j\neq i, i-1$. Finally, the phase $\lambda$ is determined by 
imposing the relation $\sigma_i \sigma_{i+1}\sigma_i = \sigma_{i+1} \sigma_i \sigma_{i+1}$, which can only be fulfilled if $\lambda=e^{4\pi i\theta}$.
\end{rem}

\section{Extended de Finetti's theorem for processes whose distribution factorizes through infinite twisted tensor products}

This section is devoted to presenting a version of de Finetti's theorem
for processes whose distribution factorizes through a twisted tensor product. In some of the  models  we discuss,
permutations may not be implemented as automorphisms on the corresponding quotients.
Even so, the compact convex set of all spreadable states can be studied and determined completely.
In particular, we aim to understand when this set is a Choquet simplex and what its extreme states are.
To begin with, when permutations do act on a twisted product, {\it i.e.} when the assigned bicharacter is
antisymmetric, spreadable states and exchangeable states are the same, as follows by
Theorem \ref{mainRyll}.
When permutations do not act,
there are going to be more spreadable states than exchangeable states, and, as we saw in Corollary \ref{Rylltwisted},  models exist where the set of 
exchangeable states is even empty.\\
Studying spreadability entails having to do with $\bj_\bn$, which  is only a semigroup. Therefore,
the general theory of $C^*$-dynamical systems is not directly applicable.
However, it is still possible to overcome this obstacle by resorting to an automorphic extension of the dynamics, which is a procedure 
first introduced in \cite{CDRCAR}.\\
As of now $(\ga, G, \gamma)$ will be a $G$-graded unital $C^*$-algebra. Moreover, henceforward we will always be working under the assumption that   
$\ga^G$ is nuclear.
\begin{rem}\label{separate}
We would like to point out that product states separate $\otimes_v^\bn\ga$ if $\ga^G$ is nuclear. Indeed, the $C^*$-subalgebra $C^*(\{i_j(\ga^G): j\in\bn\})\subset\otimes_v^\bn\ga $ identifies with
the infinite tensor product $\otimes^\bn \ga^G$, without having to specify what completion we are considering as they are all the same, and in particular we can assume we are working with the spatial completion.  Therefore, the set $\{\otimes^\bn\om: \om\in\cs(\ga^G)\}$ separates
 $\otimes^\bn \ga^G$. Now for  any $G$-invariant state $\om$ on $\ga$, we have $\times \om= \otimes\om\upharpoonright_{\ga^G}\,\circ E$, where $E$ is the canonical conditional 
expectation of  $\otimes_v^\bn\ga$ onto $\otimes^\bn \ga^G$. The thesis then follows because $E$ is faithful.
\end{rem}
We will need to use the above remark in the proof of Lemma \ref{subuniversal}.\\
Let $\varphi$ be a spreadable state on $\bigotimes_v^\bn\ga$. Associated with any $h\in\bj_\bn$ there is an isometry acting on the Hilbert space
$\ch_\varphi$ of the GNS representation of $\varphi$ as
$$T_h^\varphi\pi_\varphi(x)\xi_\varphi:=\pi_\varphi(\a_h(x))\xi_\varphi\,, \,\, x\in \bigotimes_v^\bn\ga\,,$$
where
$\xi_\varphi$ is the GNS vector of $\varphi$.\\
Let $\bz\big[\frac{1}{2}\big]$ be the  (additive) group of dyadic numbers. For every fixed $n\in\bn$, we denote
by $\frac{\bz}{2^n}$ the set of rational numbers $\{\frac{k}{2^n}: k\in\bz\}$.\\
In order to obtain the automorphic extension of the dynamics, we need to think of the generators of
$\bj_\bz$, see the preliminary section, as functions acting on the whole set of dyadic numbers. We do so by suitably extending $\theta_0$ to
a bijection $\widetilde{\theta}_0$ of $\bz\big[\frac{1}{2}\big]$, which is defined below

$$
\widetilde{\theta}_0(d):=\left\{\begin{array}{ll}
                      d & \text{if}\,\, d\leq -1\,, \\
                      2d+1&\text{if}\,\, -1\leq d\leq 0\\
                      d+1 & \text{if}\,\, d\geq 0\,.
                    \end{array}
                    \right.
$$

 For each natural $n$, we consider the dilation $\delta_n$ by $2^n$ acting on $\bz\big[\frac{1}{2}\big]$, that is
$$\delta_n(d)=2^n d\,, \,\, d\in \bz\bigg[\frac{1}{2}\bigg]\,. $$
Let us define  $\widetilde{\theta}_n:=\delta_n^{-1}\circ\widetilde{\theta}_0\circ\delta_n$ and
$\widetilde{\tau}_{k, n}(r)=r+\frac{k}{2^n}\,, \, r\in \bz\big[\frac{1}{2}\big]$, for $n\in\bn$ and $k\in\bz$.\\
For each natural $n$, we then consider the group $H_n$ generated by
$\widetilde{\theta}_k$ and $\widetilde{\tau}_{1, n}$ with $k=1, \ldots, n$. 
\begin{rem}\label{Thompson}
We point out that each
$H_n$ is isomorphic with the Thompson group $F$ by \cite[Proposition 3.1.1]{Belk}.
\end{rem}
Note that $H_n\subset H_{n+1}$ for each $n$. Therefore, we can consider the group $H$ which is by definition the inductive  limit
of the sequence $\{H_n: n\in\bn\}$ w.r.t. the inclusions, namely
$$H:= \bigcup_{n=0}^\infty H_n\, .$$\\
Notably, $H$  acts through automorphisms on $\bigotimes_v^{\bz\left[\frac{1}{2}\right] }\ga$,
the twisted infinite tensor product of $\ga$ with itself indexed by $\bz\left[\frac{1}{2}\right]$,
 by moving the corresponding indices. Precisely,  for every $h\in H$, by universality of $\bigotimes_v^{\bz\left[\frac{1}{2}\right] }\ga$ there exists
a  $*$-automorphism $\alpha_h$ of $\bigotimes_v^{\bz\left[\frac{1}{2}\right] }\ga$ uniquely determined by
$$\alpha_h(i_d(a)):=i_{h(d)}(a) \,\,\, d\in\bz\left[\frac{1}{2}\right]\,, a\in\ga\,.$$
We denote by $\cs^H\big(\bigotimes_v^{\bz\left[\frac{1}{2}\right] }\ga\big)$ the set of all invariant states on
$\bigotimes_v^{\bz\left[\frac{1}{2}\right] }\ga$ under the action of $H$.\\

The next proposition allows us to realize any spreadable state on $\bigotimes_v^\bn\ga$ as a unique
$H$-invariant state on $\bigotimes_v^{\bz\left[\frac{1}{2}\right] }\ga$.
\begin{prop}
\label{affine}
The restriction map $T: \cs^H\big(\bigotimes_v^{\bz\left[\frac{1}{2}\right] }\ga\big)\rightarrow \cs^{\bj_\bn}(\bigotimes_v^\bn\ga)$
$$T(\varphi)=\varphi\lceil_{\bigotimes_v^\bn\ga}$$
establishes an affine homeomorphism of compact convex sets.
\end{prop}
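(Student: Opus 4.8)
The plan is to check in turn that $T$ is well defined, affine, weak-$*$ continuous, injective, and surjective, and then to upgrade the resulting continuous affine bijection to a homeomorphism by compactness. The structural input that drives everything is the flexibility of $H$ acting as increasing bijections of $\bz\left[\frac{1}{2}\right]$: I will use (a) that for every finite $F\subset\bz\left[\frac{1}{2}\right]$ there is $h\in H$ with $h(F)\subset\bn$, and (b) that an increasing injection of a finite subset of $\bn$ into $\bn$ always coincides, on that subset, with (the restriction of) an element of $\bj_\bn$. Fact (a) follows from the identification $H_n\cong F$ of Remark \ref{Thompson} and the standard homogeneity of Thompson's group on finite dyadic configurations (first translate $F$ into $[0,\infty)$ by integer translations, then spread the points to distinct integers). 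For the link with $\bj_\bn$ one checks directly that $\widetilde\tau_{h,0}\circ\widetilde\theta_0\circ\widetilde\tau_{h,0}^{-1}$ restricts on $\bn$ to the partial shift $\theta_h$; since the $\theta_h$ generate $\bj_\bn$, each generator of $\bj_\bn$ is the restriction to $\bn$ of an element $g_h\in H$ with $g_h(\bn)\subset\bn$.

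Well-definedness of $T$ and membership in $\cs^{\bj_\bn}$ follow at once: because the commutation factors $v(\partial a,\partial b)$ depend only on the order of indices, which $g_h$ preserves, the endomorphism $\alpha_{\theta_h}$ of $\bigotimes_v^\bn\ga$ agrees with $\alpha_{g_h}$ on the subalgebra $\bigotimes_v^\bn\ga$; hence restricting $\varphi\circ\alpha_{g_h}=\varphi$ gives $T(\varphi)\circ\alpha_{\theta_h}=T(\varphi)$ for all $h$, so $T(\varphi)$ is spreadable. Affinity and weak-$*$ continuity are immediate for a restriction map. For injectivity I use that a state on $\bigotimes_v^{\bz\left[\frac{1}{2}\right]}\ga$ is determined by its values on localized elements: if $x$ is localized with $\supp x=F$ and $h\in H$ satisfies $h(F)\subset\bn$, then $\alpha_h(x)\in\bigotimes_v^\bn\ga$ and $H$-invariance gives $\varphi(x)=\varphi(\alpha_h(x))=T(\varphi)(\alpha_h(x))$, so $\varphi$ is completely determined by $T(\varphi)$.

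Surjectivity is the heart of the matter. Given a spreadable state $\varphi$ on $\bigotimes_v^\bn\ga$, I will define $\widehat\varphi$ on localized elements by $\widehat\varphi(x):=\varphi(\alpha_h(x))$ for any $h\in H$ with $h(\supp x)\subset\bn$, and prove independence of $h$. If $h,h'$ both push $\supp x$ into $\bn$, then $g:=h'h^{-1}\in H$ restricts to an increasing injection of the finite set $h(\supp x)\subset\bn$ into $\bn$, which by fact (b) coincides there with some $f\in\bj_\bn$; since $\alpha_g$ and $\alpha_f$ then agree on elements localized in $h(\supp x)$, spreadability of $\varphi$ yields $\varphi(\alpha_{h'}(x))=\varphi(\alpha_g\alpha_h(x))=\varphi(\alpha_f\alpha_h(x))=\varphi(\alpha_h(x))$. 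As each $\alpha_h$ is an isometric $*$-automorphism, $\widehat\varphi$ is contractive, positive and unital on the dense $*$-algebra of localized elements, hence extends to a state; the same pushing-into-$\bn$ argument shows $\widehat\varphi\circ\alpha_g=\widehat\varphi$ for every $g\in H$, and taking $h=\id$ on elements already localized in $\bn$ gives $T(\widehat\varphi)=\varphi$.

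Finally, $\cs^H\big(\bigotimes_v^{\bz\left[\frac{1}{2}\right]}\ga\big)$ and $\cs^{\bj_\bn}(\bigotimes_v^\bn\ga)$ are weak-$*$ compact, being weak-$*$ closed subsets of the respective state spaces, so the continuous affine bijection $T$ between them is automatically a homeomorphism. I expect the main obstacle to be precisely the two combinatorial facts about $H$ in the opening paragraph, namely the transitivity needed to move any finite dyadic set into $\bn$ and the reduction, in the well-definedness of $\widehat\varphi$, to genuine $\bj_\bn$-invariance of $\varphi$; once these are secured, the analytic steps (positivity, boundedness, continuity, compactness) are routine.
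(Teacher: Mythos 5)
Your strategy is genuinely different from the paper's: you build the $H$-invariant extension in one stroke, defining it on localized elements by pushing supports into $\bn$ via elements of $H$, whereas the paper follows \cite[Proposition 3.1]{CDRCAR}, extending the state in stages (first to a $\bj_\bz$-invariant state on $\bigotimes_v^\bz\ga$ by a shift-and-consistency argument, then up the dyadic levels $\frac{\bz}{2^n}$). The orbit-style argument could be made to work, but as written it has two genuine gaps. The more serious one is this: throughout — in the meaning of the restriction map itself, in the claim ``$\alpha_h(x)\in\bigotimes_v^\bn\ga$'', and above all in the definition $\widehat\varphi(x):=\varphi(\alpha_h(x))$ — you identify the universal twisted product $\bigotimes_v^\bn\ga$ with the $C^*$-subalgebra $C^*\bigl(i_j(\ga): j\in\bn\bigr)$ of $\bigotimes_v^{\bz\left[\frac{1}{2}\right]}\ga$. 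Since these algebras carry the \emph{maximal} $C^*$-norm subject to the relations \eqref{commrules}, this identification is not formal: a priori one only has a surjection $\Phi:\bigotimes_v^\bn\ga\rightarrow C^*\bigl(i_j(\ga): j\in\bn\bigr)$, possibly with kernel. Your surjectivity step needs $\Phi$ to be injective — otherwise the given spreadable state $\varphi$, which lives on the universal object, cannot be evaluated at $\alpha_h(x)$, and any spreadable state not factoring through $\Phi$ would fall outside the range of $T$. This is precisely the content of the paper's Lemma \ref{subuniversal}, whose proof uses that product states separate the twisted product (Remark \ref{separate}), which is where the standing hypothesis that $\ga^G$ is nuclear enters. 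Your proof never addresses this point and never uses nuclearity, so it cannot be complete.

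The second gap is your fact (b), which is false as stated: an increasing injection of a finite subset of $\bn$ into $\bn$ need not agree on that subset with any strictly increasing map of $\bn$ into itself, because increasing maps cannot shrink gaps. For instance $0\mapsto 0$, $5\mapsto 1$ has no strictly increasing extension (any $f\in\bj_\bn$ with $f(0)=0$ has $f(5)\geq 5$), yet such configurations do occur in your setting, since $g=h'h^{-1}\in H$ can compress $[0,5]$ onto $[0,1]$ by a piecewise-linear dyadic map. Hence the step ``$g$ coincides on $h(\supp x)$ with some $f\in\bj_\bn$'' in the well-definedness of $\widehat\varphi$ fails. The conclusion you want is nevertheless true and recoverable: spreadability (equivalently, invariance of the finite-dimensional distributions under all strictly increasing maps) gives $\varphi(\alpha_{h'}(x))=\varphi(\alpha_h(x))$ by a two-map argument — write both $\alpha_h(x)$ and $\alpha_{h'}(x)$ as images of one element localized in $\{0,\ldots,n-1\}$ under $\alpha_{f_1}$ and $\alpha_{f_2}$ with $f_1,f_2$ strictly increasing, and compare both to that common element — rather than by extending $g$ itself. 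So this flaw is fixable, but the argument as written is incorrect, and combined with the missing universality lemma the proposal does not yet constitute a proof.
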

\begin{proof}
It is pretty much the same as the proof
of \cite[Proposition 3.1]{CDRCAR}. There are only two things to care about in the present context. 
The first is that
any $\bj_\bn$-invariant state on $\bigotimes_v^\bn\ga$ uniquely extends to a $\bj_\bz$-invariant state on
 $\bigotimes_v^\bz\ga$. Indeed, the restriction map $\cs(\bigotimes_v^\bz\ga)^{\bj_\bz}\ni\varphi\mapsto \varphi\upharpoonright_{\bigotimes_v^\bn\ga}\in\cs(\bigotimes_v^\bn\ga)^{\bj_\bn}$ is clearly injective. As for
its surjectivity, starting from a $\bj_\bn$-invariant state $\varphi_0$ on $\bigotimes_v^\bn\ga$, one first defines a sequence of states $\{\varphi_n: n\in\bn\}$, where $\varphi_n$ is a state on the infinite twisted product of $\ga$ with itself indexed by $\{-n,\ldots, 0, 1, \ldots\}$,  $\bigotimes_{v\,\,{k=-n}}^\infty\ga_k$, 
$\ga_k=\ga$, as $\varphi_n:=\varphi_0\circ\tau^{n}$.  Now the sought $\bj_\bz$-invariant extension
of $\varphi_0$ is the unique state $\widetilde{\varphi}$ on $\bigotimes_v^\bz\ga$ such that
the restriction of $\varphi$ to $\bigotimes_{v\,\,{k=-n}}^\infty\ga_k$ is $\varphi_n$ for every $n$.\\
The second thing to care about is that,  for each $n$, the $C^*$-subalgebra of  $\bigotimes_v^{\bz\left[\frac{1}{2}\right] }\ga$
generated by the range of the embeddings $i_j$ for $j$ in $\frac{\bz}{2^n}$
is still universal, being $\bigotimes_v^{\frac{\bz}{2^n }}\ga$, as one would expect. This is proved in Lemma \ref{subuniversal}.
\end{proof}
\begin{lem}\label{subuniversal}
For each $n$, the $C^*$-subalgebra of $\bigotimes_v^{\bz\left[\frac{1}{2}\right] }\ga$
generated by $\{i_j(\ga): j\in\frac{\bz}{2^n } \}$ is canonically $*$-isomorphic
with $\bigotimes_v^{\frac{\bz}{2^n }}\ga$.
\end{lem}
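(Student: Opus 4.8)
The plan is to produce a canonical surjection from the universal object onto the subalgebra by invoking the universal property, and then to force injectivity by testing against product states, for which Remark~\ref{separate} is tailor-made.

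First I would observe that $\frac{\bz}{2^n}$, being order-isomorphic to $\bz$, is a countable totally ordered subset of $\bz\left[\frac{1}{2}\right]$, so the restricted family of embeddings $\{i_j:j\in\frac{\bz}{2^n}\}$ of $\ga$ into $\bigotimes_v^{\bz\left[\frac{1}{2}\right]}\ga$ still satisfies the defining commutation rules \eqref{commrules}, since those only involve the relative order of the indices. By the universal property of $\bigotimes_v^{\frac{\bz}{2^n}}\ga$ there is then a unique surjective $*$-homomorphism
$$\Psi:\bigotimes_v^{\frac{\bz}{2^n}}\ga\longrightarrow C^*\Big(i_j(\ga):j\in\tfrac{\bz}{2^n}\Big)\subset\bigotimes_v^{\bz\left[\frac{1}{2}\right]}\ga$$
intertwining the canonical embeddings. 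Thus the whole content of the lemma is the injectivity of $\Psi$.

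For injectivity I would use product states. For each $G$-invariant $\om\in\cs^G(\ga)$ the product state $\times\om$ exists on both algebras, and the one on the larger algebra restricts, through $\Psi$, to the one on $\bigotimes_v^{\frac{\bz}{2^n}}\ga$: both are determined by their values on ordered monomials and agree there, so $(\times\om)\circ\Psi=\times\om$. Now the argument of Remark~\ref{separate}—which rests only on the nuclearity of $\ga^G$, the identification of $C^*(\{i_j(\ga^G)\})$ with the spatial infinite tensor product $\otimes^{\frac{\bz}{2^n}}\ga^G$, and the faithfulness of the canonical conditional expectation $E$ onto it—carries over verbatim to the index set $\frac{\bz}{2^n}$, showing that the $G$-invariant product states form a \emph{faithful} family: if $(\times\om)(x^*x)=0$ for all $\om\in\cs^G(\ga)$, then $x=0$. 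Hence if $\Psi(x)=0$ then $(\times\om)(x^*x)=(\times\om)(\Psi(x^*x))=0$ for every such $\om$, forcing $x=0$. Therefore $\Psi$ is injective, and being surjective it is the desired canonical $*$-isomorphism.

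The only point requiring care—and the place where I expect the argument to feel least mechanical—is checking that the separation mechanism of Remark~\ref{separate}, stated there for the index set $\bn$, genuinely applies to $\frac{\bz}{2^n}$; this reduces to noting that its proof uses nothing about $\bn$ beyond being a countable totally ordered set, so the identification with $\otimes^{\frac{\bz}{2^n}}\ga^G$ and the faithfulness of $E$ hold unchanged. The remaining steps (the commutation relations \eqref{commrules} for the restricted embeddings, and the agreement of the two product states on ordered monomials) are routine.
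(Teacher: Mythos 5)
Your proposal is correct and follows essentially the same route as the paper's proof: both obtain the surjection $\Psi$ (the paper's $\Phi$) from the universal property of $\bigotimes_v^{\frac{\bz}{2^n}}\ga$, note that each product state $\times\om$ factors through it via the restriction of the corresponding product state on $\bigotimes_v^{\bz\left[\frac{1}{2}\right]}\ga$, and then conclude injectivity from the fact that product states separate the twisted product, as in Remark~\ref{separate}. Your explicit check that the separation argument of Remark~\ref{separate} transfers from the index set $\bn$ to $\frac{\bz}{2^n}$ is a point the paper leaves implicit, but it is the same mechanism.
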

\begin{proof}
By universality of $\bigotimes_v^{\frac{\bz}{2^n }}\ga$, there certainly is a $*$-epimorphism
$\Phi: \bigotimes_v^{\frac{\bz}{2^n }}\ga\rightarrow C^*(\{i_j(\ga): j\in\frac{\bz}{2^n } \})$ such that
\begin{equation}\label{epi}
\Phi(i_j'(a))=i_j(a)
\end{equation} for every $j$ in ${\frac{\bz}{2^n }}$, and every $a$ in $\ga$, where
$\{i_j': j\in{\frac{\bz}{2^n }} \}$ are the embeddings of $\ga$ into $ \bigotimes_v^{\frac{\bz}{2^n }}\ga$
appearing in \eqref{commrules}. \\
%
For every $G$-invariant state $\om$ on $\ga$, 
we denote by $\times\om$ the corresponding infinite product state on $ \bigotimes_v^{\frac{\bz}{2^n }}\ga$, and
by $\varphi_\om$ the restriction to $C^*(\{i_j(\ga): j\in\frac{\bz}{2^n } \})$ of the infinite product of $\om$ with itself as a state of $\bigotimes_v^{\bz\left[\frac{1}{2}\right] }\ga$.
Thanks to \eqref{epi}, we clearly have $\times\om=\varphi_\om\circ\Phi$. The conclusion that $\Phi$ is injective can now be easily drawn: if $x$ in  $\bigotimes_v^{\frac{\bz}{2^n }}\ga$ is such that $\Phi(x^*x)=0$, then
$\times\om\, (x^*x)=0$ for every $G$-invariant state $\om$ on $\ga$, hence $x^*x=0$ because product states separate 
$\bigotimes_v^{\frac{\bz}{2^n }}\ga$, see Remark \ref{separate}.
\end{proof}
\begin{rem}\label{representations}
We take the opportunity to point out that an analogue of Proposition \ref{affine} holds for the free product
$\ast_\bn\ga\subset \ast_{\bz\left[\frac{1}{2}\right]} \ga$ as well. This allows one to associate a unitary
representation of the Thompson group $F$ to any spreadable quantum stochastic process thanks to the group isomorphism recalled in
Remark \ref{Thompson}. This is obtained by $H_0\ni h\mapsto u_h^\varphi$ since $H_0\cong F$. 
We note that the representation of $F$ coming from $H_n\cong F$
is unitarily  equivalent to that coming from $H_0$, 
since the two are intertwined by  the unitary implementing $\delta_n$.
\end{rem}
In light of the affine homeomorphism established in Proposition \ref{affine}, studying the properties
of the convex set of spreadable states amounts to studying $\cs^H\big(\bigotimes_v^{\bz\left[\frac{1}{2}\right] }\ga\big)$. With this in mind, it is then natural to ask oneself when the extended system is $H$-abelian, which is the same as saying  $\cs^H\big(\bigotimes_v^{\bz\left[\frac{1}{2}\right] }\ga\big)$ is a Choquet simplex by Lemma \ref{summary}.\\
For any given $H$-invariant state $\varphi$ on $\bigotimes_v^{\bz\left[\frac{1}{2}\right] }\ga$ and for every $h\in G$, we can define
a unitary $u_h^\varphi$ acting on the GNS Hilbert space $\ch_\varphi$  of the GNS representation $\pi_\varphi$ of  $\bigotimes_v^{\bz\left[\frac{1}{2}\right] }\ga$  as
$$u_h^\varphi\pi_\varphi(a)\xi_\varphi=\pi_\varphi(\alpha_h(a))\xi_\varphi\,,\, \, a\in\bigotimes_v^{\bz\left[\frac{1}{2}\right] }\ga\,,$$
where $\xi_\varphi$ is the GNS vector.\\
We then consider
the closed subspace
$$\ch_\varphi^H:=\{\xi\in\ch_\varphi: u_h^\varphi\xi=\xi,\, \textrm{for all}\, h\in H\}$$
of invariant
vectors, and denote by $E_\varphi$ the corresponding orthogonal projection.
Finally, we denote by $\varphi_0$ the restriction of $\varphi$ to $\bigotimes_v^\bn\ga$.\\
We next need to consider the unital semigroup $\widetilde{S}\subset H$ generated by
$\widetilde{\theta_n}$ and $\widetilde{\tau}_{k, n}$ as $n$ varies in $\bz$ and $k$ in $\bn$. Note that for any
element $h\in \widetilde{S}$ one has $h(q)\geq q$ for all $q$ in $\bz\big[\frac{1}{2}\big]$.\\
Now, for any $H$-invariant state $\varphi$ on $\bigotimes_v^{\bz\left[\frac{1}{2}\right] }\ga$ define
$\ch_\varphi^{\widetilde{S}}:=\{\xi\in\ch_\varphi: u_h^\varphi\xi=\xi,\, h\in \widetilde{S}\}$.
Since $\widetilde{S}$ generates $H$ as a group, we clearly have

\begin{lem}\label{fromGtoS}
For any $H$-invariant state $\varphi$ on $\bigotimes_v^{\bz\left[\frac{1}{2}\right] }\ga$, one has $\ch_\varphi^H=\ch_\varphi^{\widetilde{S}}$.
\end{lem}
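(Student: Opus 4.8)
The plan is to exploit that $h\mapsto u_h^\varphi$ is a genuine \emph{unitary representation} of the group $H$, so that invariance of a vector under a set of generators already forces invariance under the whole group. The statement is in fact essentially immediate once this is made precise, and the sentence preceding the lemma (``Since $\widetilde{S}$ generates $H$ as a group'') indicates that the only substantive input is that $\widetilde{S}$ generates $H$, which holds by construction.

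First I would record the trivial inclusion $\ch_\varphi^H\subseteq\ch_\varphi^{\widetilde{S}}$: a vector fixed by every $u_h^\varphi$ with $h\in H$ is in particular fixed by those $h$ lying in the subset $\widetilde{S}\subset H$. For the reverse inclusion, the key observation is that, because $\varphi$ is $H$-invariant and $\alpha$ is a group action (so $\alpha_{hk}=\alpha_h\circ\alpha_k$), the assignment $h\mapsto u_h^\varphi$ is a homomorphism into the unitary group of $\ch_\varphi$. This is checked on the dense set $\pi_\varphi(\ga)\xi_\varphi$ by the computation $u_h^\varphi u_k^\varphi\pi_\varphi(a)\xi_\varphi=u_h^\varphi\pi_\varphi(\alpha_k(a))\xi_\varphi=\pi_\varphi(\alpha_{hk}(a))\xi_\varphi=u_{hk}^\varphi\pi_\varphi(a)\xi_\varphi$, exactly as in the $C^*$-dynamical preliminaries.

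Granting this, for any fixed $\xi\in\ch_\varphi$ its stabilizer $\{g\in H:u_g^\varphi\xi=\xi\}$ is a subgroup of $H$: it contains the identity, it is closed under products since $u_g^\varphi u_{g'}^\varphi\xi=u_g^\varphi\xi=\xi$, and if $u_g^\varphi\xi=\xi$ then applying $(u_g^\varphi)^{-1}=u_{g^{-1}}^\varphi$ gives $u_{g^{-1}}^\varphi\xi=\xi$. Now take $\xi\in\ch_\varphi^{\widetilde{S}}$. Its stabilizer contains $\widetilde{S}$, hence contains the subgroup generated by $\widetilde{S}$, which is all of $H$. Therefore $u_g^\varphi\xi=\xi$ for every $g\in H$, i.e.\ $\xi\in\ch_\varphi^H$, and combining with the first inclusion yields the claimed equality.

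There is essentially no obstacle here: the content reduces to the elementary facts that stabilizers under a unitary representation are subgroups and that a subgroup containing a generating set is the whole group. The only point to make sure of is that $\widetilde{S}$ indeed generates $H$ as a group, which follows directly from the definitions, since $H=\bigcup_n H_n$ with each $H_n$ generated by the maps $\widetilde{\theta}_k$ and $\widetilde{\tau}_{1,n}$ that already appear (together with their inverses) among, or as group-theoretic consequences of, the semigroup generators of $\widetilde{S}$.
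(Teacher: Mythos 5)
Your proof is correct and follows exactly the route the paper intends: the paper states this lemma without proof, remarking only that ``$\widetilde{S}$ generates $H$ as a group,'' and your argument (stabilizers of vectors under the unitary representation $h\mapsto u_h^\varphi$ are subgroups, hence a vector fixed by $\widetilde{S}$ is fixed by the group it generates) is precisely the standard justification being alluded to. Nothing is missing; you have simply made explicit what the authors deemed clear.
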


The following result is key to studying $H$-abelianness of the extended system.
\begin{lem}\label{commrulproj}
Let $\varphi$ be an $H$-invariant state on $\bigotimes_v^{\bz\left[\frac{1}{2}\right] }\ga$.
For homogeneous $a, b$ in $\bigotimes_v^{\bz\left[\frac{1}{2}\right] }\ga$, the commutation rule
$$E_\varphi \pi_\varphi(a)E_\varphi \pi_\varphi(b)E_\varphi=v(\partial a, \partial b)E_\varphi \pi_\varphi(b)E_\varphi \pi_\varphi(a)E_\varphi\,$$
holds.
\end{lem}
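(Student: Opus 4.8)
The plan is to reduce to localized homogeneous $a,b$ and then obtain the commutation rule in four moves: (i) use the $H$-action to drag the support of $b$ entirely to the right of that of $a$; (ii) collapse the central projection $E_\varphi$ once the two supports are disjoint; (iii) apply the defining commutation rule \eqref{commrules} inside $\bigotimes_v^{\bz[\frac{1}{2}]}\ga$; and (iv) run (ii) and (i) backwards. First I would reduce to localized homogeneous elements: localized elements are dense, the spectral projections attached to the $G$-grading preserve localization, and $x\mapsto E_\varphi\pi_\varphi(x)E_\varphi$ is norm continuous, so it suffices to treat $a\in V_{\partial a}$, $b\in V_{\partial b}$ with finite supports. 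By \eqref{commrules} any localized homogeneous element is a norm limit of linear combinations of ordered monomials $i_{j_1}(a_1)\cdots i_{j_n}(a_n)$ with $j_1<\cdots<j_n$ \emph{or} with $j_1>\cdots>j_n$, the two presentations differing only by bicharacter scalars; this freedom lets me match either index ordering occurring in Lemma \ref{lemproj}.

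The two engines are the following. The \emph{sandwiched shift invariance}: since $E_\varphi$ projects onto $\ch_\varphi^H$, each $u_h^\varphi$ ($h\in H$) fixes $\mathrm{Ran}\,E_\varphi$, so $u_h^\varphi E_\varphi=E_\varphi=E_\varphi u_h^\varphi$, and hence
\[
E_\varphi\pi_\varphi(\alpha_h(x))E_\varphi=E_\varphi u_h^\varphi\pi_\varphi(x)(u_h^\varphi)^*E_\varphi=E_\varphi\pi_\varphi(x)E_\varphi,\qquad h\in H .
\]
The \emph{collapse lemma}: for localized homogeneous $c,d$ with disjoint supports, with all of $\mathrm{supp}(c)$ either to the left or to the right of $\mathrm{supp}(d)$, one has $E_\varphi\pi_\varphi(cd)E_\varphi=E_\varphi\pi_\varphi(c)E_\varphi\pi_\varphi(d)E_\varphi$. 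This is precisely the $\bz[\frac{1}{2}]$-indexed, twisted analogue of Lemma \ref{lemproj}: pulling $\varphi$ back along the quotient $p\colon\ast_{\bz[\frac{1}{2}]}\ga\to\bigotimes_v^{\bz[\frac{1}{2}]}\ga$ produces an $\widetilde{S}$-invariant, hence spreadable, state with the same GNS data $(\pi_\varphi,\ch_\varphi,\xi_\varphi,E_\varphi)$, and writing $c,d$ as ordered monomials in the direction dictated by the configuration makes $cd$ itself an ordered monomial of the type handled in Lemma \ref{lemproj}; the conclusion then transports back through $p$.

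Granting these, the computation runs as follows. Choose $g\in\widetilde{S}$ (a translation $\widetilde{\tau}_{k,n}$ with $k$ large enough) so that $\mathrm{supp}(\alpha_g(b))$ lies entirely to the right of $\mathrm{supp}(a)$, noting $\partial(\alpha_g(b))=\partial b$ since $\alpha_g$ merely relabels indices. Then, applying sandwiched shift invariance to the chunk $E_\varphi\pi_\varphi(b)E_\varphi$ and then the collapse lemma,
\[
E_\varphi\pi_\varphi(a)E_\varphi\pi_\varphi(b)E_\varphi
=E_\varphi\pi_\varphi(a)E_\varphi\pi_\varphi(\alpha_g(b))E_\varphi
=E_\varphi\pi_\varphi\big(a\,\alpha_g(b)\big)E_\varphi .
\]
Since $a$ and $\alpha_g(b)$ are homogeneous with separated supports, \eqref{commrules} gives $a\,\alpha_g(b)=v(\partial a,\partial b)\,\alpha_g(b)\,a$, so this equals $v(\partial a,\partial b)E_\varphi\pi_\varphi(\alpha_g(b)\,a)E_\varphi$. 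Collapsing once more (now in the right-of configuration) and then shifting $\alpha_g(b)$ back to $b$ yields
\[
v(\partial a,\partial b)\,E_\varphi\pi_\varphi(\alpha_g(b))E_\varphi\pi_\varphi(a)E_\varphi
=v(\partial a,\partial b)\,E_\varphi\pi_\varphi(b)E_\varphi\pi_\varphi(a)E_\varphi ,
\]
which is the asserted identity.

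The hard part will be the collapse lemma, that is, transcribing Lemma \ref{lemproj} to the dyadic index set: one must realize $E_\varphi$ via Alaoglu--Birkhoff as a strong limit of convex combinations of $\{u_h^\varphi:h\in\widetilde{S}\}$ and, for each such $h$, invoke spreadability through a monotone map of $\bz[\frac{1}{2}]$ that is the identity on $\mathrm{supp}(c)$ and agrees with $h$ on $\mathrm{supp}(d)$. The existence and bookkeeping of these maps on $\bz[\frac{1}{2}]$ (rather than on $\bn$) is the only genuinely technical point, and it is handled exactly as in \cite{CDRCAR}. Everything else — separating the supports and tracking the single scalar $v(\partial a,\partial b)$ — is routine.
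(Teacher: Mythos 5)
Your proposal is correct, and it reaches the commutation rule by a genuinely different route than the paper. The paper argues asymptotically and entirely inside the twisted product: it first proves a twisted asymptotic abelianness, $\lim_n\|[\a_{\widetilde{\tau}_{n,0}}(a),b]_{\partial a,\partial b}\|=0$ (exactly for localized elements, then by $G$-averaging for general homogeneous ones), then expresses $E_\varphi$ via Alaoglu--Birkhoff as a strong limit of convex combinations of $\{u_h^\varphi: h\in\widetilde{S}\}$, and runs a single $\varepsilon$-estimate chain combining these two facts; no exact factorization identity ever appears. You instead isolate two exact identities --- the sandwich invariance $E_\varphi\pi_\varphi(\a_h(x))E_\varphi=E_\varphi\pi_\varphi(x)E_\varphi$ for $h\in H$, and the collapse identity $E_\varphi\pi_\varphi(cd)E_\varphi=E_\varphi\pi_\varphi(c)E_\varphi\pi_\varphi(d)E_\varphi$ for separated supports, i.e.\ the dyadic, twisted analogue of Lemma \ref{lemproj} --- after which the main computation is purely algebraic (shift, collapse, commute, collapse, shift back). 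What your route buys is a cleaner top-level argument that recycles the machinery of Lemma \ref{lemproj}; what the paper's route buys is that, for this particular lemma, it needs nothing about $H$ beyond the inequality $h(q)\geq q$ for $h\in\widetilde{S}$ and the fact that each $u_h^\varphi$ fixes ${\rm Ran}\,E_\varphi$ pointwise.

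Two points you should make explicit when writing up the collapse lemma. First, the only invariance available is under $H$, not under arbitrary increasing maps of $\bz\left[\frac{1}{2}\right]$: the functional $\langle\pi_\varphi(\cdot)v,w\rangle$, $v,w\in{\rm Ran}\,E_\varphi$, is $H$-invariant by the sandwich argument, so the interpolating map that is the identity on ${\rm supp}(c)$ and agrees with $h_i$ on ${\rm supp}(d)$ must be produced \emph{inside} $H$. This is the same property the paper itself invokes in the proofs of Theorems \ref{charHabelianness} and \ref{maindefinetti}, so you are on the same footing as the authors, but it deserves a sentence. Second, such an interpolating element can only exist when ${\rm supp}(c)$ lies to the \emph{left} of ${\rm supp}(d)$: since $h_i\in\widetilde{S}$ pushes points to the right, prescribing $f={\rm id}$ on a finite set lying to the right of ${\rm supp}(d)$ may be incompatible with monotonicity (e.g.\ $f(0)=h_i(0)>5=f(5)$). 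The opposite configuration --- which you do use, in the step $E_\varphi\pi_\varphi(\a_g(b)a)E_\varphi=E_\varphi\pi_\varphi(\a_g(b))E_\varphi\pi_\varphi(a)E_\varphi$ --- must therefore be deduced from the left-of case by taking adjoints, exactly as at the end of the proof of Lemma \ref{lemproj}, rather than by a direct interpolation.
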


\begin{proof}
Define the twisted commutator for homogeneous elements $a, b$ as
$$ [a, b]_{\partial a, \partial b}:=ab-v (\partial a, \partial b)ba, \, a, b\in \bigotimes_v^{\bz\left[\frac{1}{2}\right] }\ga\, .$$
The first thing we show is that the limit equality
\begin{equation}\label{twistesasab}
\lim_{n\rightarrow\infty} \| [ \a_{\widetilde{\tau}_{n, 0}}(a), b]_{\partial a, \partial b}\|=0 
\end{equation}
holds for any homogeneous $a, b$. Indeed, the equality is easily seen to hold for localized elements, since we even have
$[ \a_{\widetilde{\tau}_{n, 0}}(a), b]_{\partial a, \partial b}=0$ provided  $n$ is so big that 
the supports of  $\a_{\widetilde{\tau}_{n, 0}}(a)$ and $b$ are disjoint. The general case can be dealt with by
a density argument because any  homogeneous $a$ is the norm limit of a sequence $\{a_n\}_{n\in\bn}$, where each $a_n$ is localized and homogeneous with $\partial a_n=\partial a$. This can be seen in the following way.
Since the $*$-subalgebra of localized elements is dense, there certainly exists a sequence
$\{x_n\}_{n\in\bn}$ of localized elements such that $\|a-x_n\|\rightarrow 0$ for $n\rightarrow\infty$.
For every $n$ in $\bn$, define $a_n :=\int_G \gamma_g(x_n)\overline{\partial a(g)} {\rm d}\mu_G(g)$.
Now $a_n$ is homogeneous of the same degree as $a$, for 
\begin{align*}
\gamma_h(a_n)&=\int_G \gamma_{hg}(x_n)\overline{\partial a(g)} {\rm d}\mu_G(g)=\int_G \gamma_g(x_n)\overline{\partial a(h^{-1}g)} {\rm d}\mu_G(g)\\
&=\partial a(h)a_n\,, \,\,\,\textrm{ for all}\,\, h\in G\,.
\end{align*}
Furthermore, the sequence $\{a_n\}_{n\in\bn}$ still converges to $a$ as
\begin{align*}
\lim_n a_n=&\lim_n \int_G \gamma_g(x_n)\overline{\partial a(g)}{\rm d}\mu_G(g)=\int_G \gamma_g(a)\overline{\partial a(g)}{\rm d}\mu_G(g)\\
=&\int_G \partial a(g)\overline{\partial a(g)}{\rm d}\mu_G(g)\,\,a=a\,,
\end{align*}
where the second-to-last equality holds by uniform convergence.\\

Now, by applying the Alaoglu-Birkhoff ergodic theorem to the family of unitaries $\{u_h^\varphi: h\in \widetilde{S}\}$, we find that $E_\varphi$ is the strong limit of
a net whose elements are finite convex combination of the type
$\sum_{i=1}^m\lambda_i u_{h_i}$, with $h_i$ in $\widetilde{S}$, $\lambda_i\geq 0$ for all $i=1, \ldots, m$, and
$\sum_{i=1}^m \lambda_i =1$.\\
Thanks to the twisted asymptotic abelianness in \eqref{twistesasab}, for any homogeneous $a, b$, for any
$\eta, \xi$ in ${\rm Ran} E_\varphi$ with $\|\eta\|=\|\xi\|=1$, and
any $\varepsilon >0$ we have
\begin{equation}
\left|   \langle \eta, \left[\pi_\varphi\left(\a_{\widetilde{\tau}_{n, 0}}  \left( \sum_{i=1}^m\lambda_i\a_{h_i}(a)\right)\right), \pi_\varphi(b)\right]_{\partial a, \partial b}\xi \rangle    \right|\leq \varepsilon
\end{equation}
for $n\geq N_\varepsilon$.
We have the following chain of inequalities:
\begin{align*}
&\varepsilon\geq\left|   \langle \eta, \left[\pi_\varphi\left(\a_{\widetilde{\tau}_{n, 0}}  \left( \sum_{i=1}^m\lambda_i\a_{h_i}(a)\right)\right), \pi_\varphi(b)\right]_{\partial a, \partial b}\xi \rangle    \right|=\\
&|  \langle\eta, \sum_{i=1}^m \lambda_i u_{\widetilde{\tau}_{n, 0}}^\varphi u_{h_i}^\varphi\pi_\varphi(a)(u_{h_i}^\varphi)^* (u_{\widetilde{\tau}_{n, 0}}^\varphi)^*\pi_\varphi(b)\xi\rangle -\\
&v(\partial a, \partial b) \langle\eta, \sum_{i=1}^m \lambda_i \pi_\varphi(b)u_{\widetilde{\tau}_{n, 0}}^\varphi u_{h_i}^\varphi\pi_\varphi(a)(u_{h_i}^\varphi)^* (u_{\widetilde{\tau}_{n, 0}}^\varphi)^*)\xi\rangle |=\\
&|  \langle \sum_{i=1}^m\lambda_iu_{h_i}^\varphi\pi_\varphi(a^*)\eta,   (u_{\widetilde{\tau}_{n, 0}}^\varphi)^*\pi_\varphi(b)\xi\rangle -v(\partial a, \partial b) \langle\eta, \pi_\varphi(b)u_{\widetilde{\tau}_{n, 0}}^\varphi \sum_{i=1}^m \lambda_i u_{h_i}^\varphi\pi_\varphi(a))\xi\rangle|\geq\\
&|  \langle E_\varphi\pi_\varphi(a^*)\eta,   \pi_\varphi(b)\xi\rangle -v(\partial a, \partial b) \langle\pi_\varphi(b^*)\eta,  E_\varphi\pi_\varphi(a))\xi\rangle|-\\
&|  \langle( \sum_{i=1}^m\lambda_iu_{h_i}^\varphi-E_\varphi)\pi_\varphi(a^*)\eta,   (u_{\widetilde{\tau}_{n, 0}}^\varphi)^*\pi_\varphi(b)\xi\rangle -v(\partial a, \partial b) \langle\eta, \pi_\varphi(b)u_{\widetilde{\tau}_{n, 0}}^\varphi( \sum_{i=1}^m \lambda_i u_{h_i}^\varphi-E_\varphi)\pi_\varphi(a))\xi\rangle|
\end{align*}
Now by the mentioned Alaoglu-Birkhoff theorem we can also assume
$$\left\|\left(\sum_{i=1}^m\lambda_iu_{h_i}^\varphi-E_\varphi\right)\pi_\varphi(a^*)\eta\right\|\leq\varepsilon\,\,{\rm and}\,\, \left\|\left(\sum_{i=1}^m \lambda_i u_{h_i}^\varphi-E_\varphi\right)\pi_\varphi(a))\xi\right\|\leq\varepsilon\,.$$
But then
\begin{align*}
&|  \langle( \sum_{i=1}^m\lambda_iu_{h_i}^\varphi-E_\varphi)\pi_\varphi(a^*)\eta,   (u_{\widetilde{\tau}_{n, 0}}^\varphi)^*\pi_\varphi(b)\xi\rangle -v(\partial a, \partial b) \langle\eta, \pi_\varphi(b)u_{\widetilde{\tau}_{n, 0}}^\varphi(\sum_{i=1}^m \lambda_i u_{h_i}^\varphi-E_\varphi)\pi_\varphi(a)\xi\rangle|\\
&\geq |  \langle E_\varphi\pi_\varphi(a^*)\eta,   \pi_\varphi(b)\xi\rangle -v(\partial a, \partial b) \langle\pi_\varphi(b^*)\eta,  E_\varphi\pi_\varphi(a))\xi\rangle|-\varepsilon\|\pi_\varphi(b)\xi\|-\varepsilon\|\pi_\varphi(b^*)\eta\|\, .
\end{align*}
Since $\varepsilon>0$ can be chosen as small as wished, we must have 
$$  \langle E_\varphi\pi_\varphi(a^*)\eta,   \pi_\varphi(b)\xi\rangle-v(\partial a, \partial b) \langle\pi_\varphi(b^*)\eta,  E_\varphi\pi_\varphi(a))\xi\rangle=0\,,$$
from which the statement easily follows.
\end{proof}
The next result provides sufficient conditions for $H$-abelianness to hold.
\begin{thm}\label{suffcondabelianness}
The $C^*$-dynamical system $(\bigotimes_v^{\bz\left[\frac{1}{2}\right] }\ga, H, \a)$ is
$H$-abelian if: for every $\chi, \eta$ in $\Delta_v$ either of the following conditions holds
\begin{enumerate}
\item [(i)] $v(\chi, \eta)=1$
\item [(ii)] $v_S(\chi, \eta)\neq 1$\,.
\end{enumerate}
\end{thm}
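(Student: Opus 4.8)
The plan is to reduce everything to the twisted commutation relation for $E_\varphi$-compressions furnished by Lemma \ref{commrulproj}, and then to analyze the two ways that relation can force commutativity. Fix an $H$-invariant state $\varphi$ and abbreviate $P_a := E_\varphi \pi_\varphi(a) E_\varphi$ for homogeneous $a$ in $\bigotimes_v^{\bz\left[\frac{1}{2}\right]}\ga$. Since $E_\varphi$ is a projection, Lemma \ref{commrulproj} reads $P_a P_b = v(\partial a, \partial b)\, P_b P_a$, and interchanging the roles of $a$ and $b$ also gives $P_b P_a = v(\partial b, \partial a)\, P_a P_b$. Because the algebraic layer is dense and $c \mapsto E_\varphi \pi_\varphi(c) E_\varphi$ is linear and norm-continuous, the subspace $E_\varphi \pi_\varphi(\bigotimes_v^{\bz\left[\frac{1}{2}\right]}\ga) E_\varphi$ is the closed linear span of the $P_a$'s; hence it suffices to show that $P_a$ and $P_b$ commute for all homogeneous $a,b$, commutativity then passing to linear combinations and norm limits.

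First I would dispose of all degrees lying outside $\Delta_v$ by proving that $P_a = 0$ whenever $\partial a = \chi \notin \Delta_v$. Applying Lemma \ref{commrulproj} to $a$ and $a^*$ (whose degree is $\chi^{-1}$), and using $P_a^* = P_{a^*}$ together with $v(\chi, \chi^{-1}) = \overline{v(\chi,\chi)}$, one gets $P_a P_a^* = \overline{v(\chi,\chi)}\, P_a^* P_a$; taking adjoints yields also $P_a P_a^* = v(\chi,\chi)\, P_a^* P_a$. If $v(\chi,\chi)$ is non-real, subtracting these two identities forces $P_a^* P_a = 0$, hence $P_a = 0$. If instead $v(\chi,\chi)$ is real but different from $1$, that is $v(\chi,\chi) = -1$, the first identity becomes $P_a P_a^* = -P_a^* P_a$, and since both sides are positive operators they must vanish, again giving $P_a = 0$. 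Thus only degrees belonging to $\Delta_v$ can contribute nonzero compressions.

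Then, for homogeneous $a,b$ with $\partial a = \chi$ and $\partial b = \eta$ both in $\Delta_v$, I would invoke the hypothesis in its two alternatives. If $v(\chi, \eta) = 1$, the relation $P_a P_b = v(\chi,\eta)\, P_b P_a$ collapses to $P_a P_b = P_b P_a$. If instead $v_S(\chi,\eta) \neq 1$, combining the two relations gives $P_a P_b = v(\chi,\eta) v(\eta,\chi)\, P_a P_b = v_S(\chi,\eta)\, P_a P_b$, so $(1 - v_S(\chi,\eta)) P_a P_b = 0$ forces $P_a P_b = 0$, and symmetrically $P_b P_a = 0$; either way the compressions commute. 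Mixed pairs, where one degree lies in $\Delta_v$ and the other does not, commute trivially because the factor of degree outside $\Delta_v$ already vanishes by the previous step. Extending by linearity and continuity as noted above then shows $E_\varphi \pi_\varphi(\bigotimes_v^{\bz\left[\frac{1}{2}\right]}\ga) E_\varphi$ is commutative, i.e. the system is $H$-abelian.

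As for the main obstacle, essentially all of the analytic work (the asymptotic twisted abelianness and the Alaoglu--Birkhoff averaging) is already encapsulated in Lemma \ref{commrulproj}, so the only genuinely non-formal point is the borderline case $v(\chi,\chi) = -1$ in the vanishing step, where the subtraction argument degenerates and one must instead exploit the positivity of both $P_a P_a^*$ and $P_a^* P_a$. I expect that positivity observation to be the single delicate ingredient; everything else is bookkeeping with the bicharacter relations.
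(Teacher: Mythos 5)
Your proof is correct and follows essentially the same route as the paper's: both reduce everything to the twisted commutation relation of Lemma \ref{commrulproj}, kill the compressions $E_\varphi\pi_\varphi(a)E_\varphi$ for degrees outside $\Delta_v$, and then split into the two alternatives $v(\chi,\eta)=1$ (immediate commutation) and $v_S(\chi,\eta)\neq 1$ (forcing the product to vanish). The only cosmetic difference is in the vanishing step, where the paper argues via the spectrum of the positive operator $E_\varphi\pi_\varphi(a)E_\varphi\pi_\varphi(a^*)E_\varphi$ being $\{0\}$, while you take adjoints and use positivity directly; the two arguments are interchangeable.
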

\begin{proof}
We have to show that, under our hypotheses,  for any $H$-invariant state $\varphi$ the set $E_\varphi \pi_\varphi(\bigotimes_v^{\bz\left[\frac{1}{2}\right] }\ga) E_\varphi$ is commutative.\\
The proof will be done by analyzing three cases.
We first deal with elements $a$ belonging to some spectral eigenspace with $\partial a\in\widehat{G}\setminus\Delta_v$.\\
We show that
\begin{equation}\label{oddcomponent}
E_\varphi\pi_\varphi(a)E_\varphi=0,\,\,\, \textrm{for all}\,\, a\,\,{\rm with}\,\, \partial a\in\widehat{G}\setminus\Delta_v\,.
\end{equation} 
This is a straightforward  application of Lemma \ref{commrulproj}, which yields
$$E_\varphi\pi_\varphi(a )E_\varphi\pi_\varphi(a^*)E_\varphi=v(\partial a, -\partial a)E_\varphi\pi_\varphi(a^* )E_\varphi\pi_\varphi(a)F_\varphi\, .$$
Now, since $v(\partial a, -\partial a)$ is a non-trivial phase, the spectrum of $E_\varphi\pi_\varphi(a )E_\varphi\pi_\varphi(a^*)E_\varphi$ is $\{0\}$, hence $E_\varphi\pi_\varphi(a )E_\varphi\pi_\varphi(a^*)E_\varphi=0$.\\
The second  case is when $a, b$ are homogeneous with $\partial a, \partial b$ in $\Delta_v$ such that
$v(\partial a, \partial b)=1$, in which case Lemma \ref{commrulproj} says that
$E_\varphi \pi_\varphi(a)E_\varphi$ and $E_\varphi \pi_\varphi(b)E_\varphi$ commute.\\
The third case  is when $a, b$ are homogeneous with $\partial a, \partial b$ in $\Delta_v$ and
$v_s(\partial a, \partial b)\neq 1$. We have

\begin{align*}
E_\varphi \pi_\varphi(a)E_\varphi \pi_\varphi(b)E_\varphi&=v(\partial a, \partial b)E_\varphi \pi_\varphi(b)E_\varphi \pi_\varphi(a)E_\varphi\\
&=v(\partial a, \partial b) v(\partial b, \partial a)E_\varphi \pi_\varphi(a)E_\varphi \pi_\varphi(b)E_\varphi\\
&=v_s(\partial a, \partial b)E_\varphi \pi_\varphi(a)E_\varphi \pi_\varphi(b)E_\varphi\,,
\end{align*}
hence $E_\varphi \pi_\varphi(a)E_\varphi \pi_\varphi(b)E_\varphi$ must be zero since $v_s(\partial a, \partial b)\neq 1$.
\end{proof}
Although we will only make use of the sufficient conditions given in the above theorem, $H$-abelianness can also be  characterized by means of a necessary and sufficient condition.
\begin{thm}\label{charHabelianness}
For the $C^*$-dynamical system $(\bigotimes_v^{\bz\left[\frac{1}{2}\right] }\ga, H, \a)$ not to be
$H$-abelian, it is necessary and sufficient that there exists a $\mathbb{J}_\bn$-invariant
state $\varphi$ on $\bigotimes_v^\bn\ga$ such that $\varphi(ab)\neq 0$ for some
localized homogeneous $a, b$ with disjoint supports and the support of one to left
of the support of the other, $\partial a, \partial b\in \Delta_v$ and
$v(\partial a, \partial b)\neq 1$.\\
\end{thm}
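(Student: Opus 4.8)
\emph{Strategy.} The plan is to first recast $H$-abelianness as a statement about the compressions $E_\varphi\pi_\varphi(\cdot)E_\varphi$, and then to match the resulting operatorial condition with the scalar condition $\varphi(ab)\neq 0$ on localized elements by means of an ordered-insertion relation of the type proved in Lemma \ref{lemproj}. Throughout I pass freely between $H$-invariant states on $\bigotimes_v^{\bz\left[\frac{1}{2}\right]}\ga$ and $\bj_\bn$-invariant states on $\bigotimes_v^\bn\ga$ via the affine homeomorphism of Proposition \ref{affine}.

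\emph{Reduction to a condition on compressions.} For homogeneous $a,b$, Lemma \ref{commrulproj} gives $E_\varphi\pi_\varphi(a)E_\varphi\pi_\varphi(b)E_\varphi=v(\partial a,\partial b)E_\varphi\pi_\varphi(b)E_\varphi\pi_\varphi(a)E_\varphi$, so the commutator of the two compressions equals $(1-v(\partial b,\partial a))E_\varphi\pi_\varphi(a)E_\varphi\pi_\varphi(b)E_\varphi$. Equation \eqref{oddcomponent} shows that compressions of elements of degree outside $\Delta_v$ vanish, so only degrees in $\Delta_v$ matter. Moreover, iterating the commutation rule yields $E_\varphi\pi_\varphi(a)E_\varphi\pi_\varphi(b)E_\varphi=v_S(\partial a,\partial b)E_\varphi\pi_\varphi(a)E_\varphi\pi_\varphi(b)E_\varphi$, so a nonzero compressed product forces $v_S(\partial a,\partial b)=1$, whence $v(\partial b,\partial a)=v(\partial a,\partial b)^{-1}\neq 1$ as soon as $v(\partial a,\partial b)\neq 1$. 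Putting these together, the system fails to be $H$-abelian if and only if there is an $H$-invariant $\varphi$ and homogeneous $a,b$ with $\partial a,\partial b\in\Delta_v$, $v(\partial a,\partial b)\neq 1$ and $E_\varphi\pi_\varphi(a)E_\varphi\pi_\varphi(b)E_\varphi\neq 0$. The technical backbone for the rest is the verbatim analog of Lemma \ref{lemproj}: for $\eta,\zeta\in{\rm Ran}\,E_\varphi$ the functional $x\mapsto\langle\pi_\varphi(x)\eta,\zeta\rangle$ is $\bj_\bn$-invariant, and whenever a localized $x$ has support to the left of that of $y$ one has $\langle\pi_\varphi(x)\pi_\varphi(y)\eta,\zeta\rangle=\langle\pi_\varphi(x)E_\varphi\pi_\varphi(y)\eta,\zeta\rangle$. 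This follows by the same Alaoglu--Birkhoff plus spreadability argument, using a monotone map fixing ${\rm supp}(x)$ and pushing ${\rm supp}(y)$ further right; the commutation rules \eqref{commrules} are preserved by monotone maps, so nothing changes.

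\emph{Sufficiency.} Assume a $\bj_\bn$-invariant $\varphi$ and localized homogeneous $a,b$ with, say, ${\rm supp}(a)$ to the left of ${\rm supp}(b)$, $\partial a,\partial b\in\Delta_v$, $v(\partial a,\partial b)\neq 1$ and $\varphi(ab)\neq 0$. Extending $\varphi$ to an $H$-invariant state and applying the ordered-insertion relation with $\eta=\zeta=\xi_\varphi$ (recall $E_\varphi\xi_\varphi=\xi_\varphi$) gives $\varphi(ab)=\langle E_\varphi\pi_\varphi(a)E_\varphi\pi_\varphi(b)E_\varphi\xi_\varphi,\xi_\varphi\rangle$. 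Since this is nonzero, the compressed product is a nonzero operator, and by the reduction above the system is not $H$-abelian.

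\emph{Necessity and main obstacle.} Conversely, starting from a witness $\varphi,a,b$ as in the reduction, I first replace $a,b$ by localized homogeneous elements of the same degrees, using density together with the averaging $a_n:=\int_G\gamma_g(x_n)\overline{\partial a(g)}\,{\rm d}\mu_G(g)$ of the proof of Lemma \ref{commrulproj} to preserve homogeneity; norm-continuity keeps the compressed product nonzero. Choosing $\eta,\zeta\in{\rm Ran}\,E_\varphi$ with $\langle\pi_\varphi(a)E_\varphi\pi_\varphi(b)\eta,\zeta\rangle\neq 0$, I then translate $b$ far to the right by an integer shift $h=\widetilde{\tau}_{N,0}\in\widetilde{S}$, chosen so that ${\rm supp}(\a_h(b))$ lies to the right of ${\rm supp}(a)$ and still inside $\bn$. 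Because $u_h^\varphi$ is unitary with $u_h^\varphi\eta=\eta$ and $E_\varphi u_h^\varphi=E_\varphi$, one has $E_\varphi\pi_\varphi(\a_h(b))\eta=E_\varphi\pi_\varphi(b)\eta$, so writing $b'':=\a_h(b)$ the matrix element is unchanged; ordered insertion then yields $\langle\pi_\varphi(ab'')\eta,\zeta\rangle\neq 0$, with $a,b''$ localized, homogeneous, of disjoint supports and degrees in $\Delta_v$ with $v(\partial a,\partial b'')\neq 1$. Finally, the $\bj_\bn$-invariant functional $x\mapsto\langle\pi_\varphi(x)\eta,\zeta\rangle$ on $\bigotimes_v^\bn\ga$ decomposes, by uniqueness of the Hahn--Jordan decomposition, into a combination of $\bj_\bn$-invariant states; one of them must be nonzero on $ab''$, providing exactly the required $\bj_\bn$-invariant state. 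I expect this necessity half to be the crux: converting the purely operatorial non-vanishing into the non-vanishing of a genuine $\bj_\bn$-invariant state on localized, disjointly supported homogeneous elements, where the two devices of separating supports (exploiting $E_\varphi u_h^\varphi=E_\varphi$) and of extracting an invariant state from a vector functional via Hahn--Jordan carry the argument.
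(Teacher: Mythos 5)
Your proposal is correct and follows essentially the same route as the paper's proof: both directions rest on Lemma \ref{commrulproj}, on the Alaoglu--Birkhoff/ordered-insertion identity $\langle\pi_\varphi(a)E_\varphi\pi_\varphi(b)\eta,\zeta\rangle=\langle\pi_\varphi(ab)\eta,\zeta\rangle$ for vectors $\eta,\zeta\in{\rm Ran}\,E_\varphi$, and on the Hahn--Jordan decomposition of the invariant vector functional to produce the required $\bj_\bn$-invariant state. The one place you go beyond the paper is welcome rather than divergent: where the paper simply asserts ``without any lack of generality the support of $a$ lies entirely to the left of the support of $b$,'' you justify this reduction explicitly by translating $b$ via $\widetilde{\tau}_{N,0}\in\widetilde{S}$ and using $u_h^\varphi\eta=\eta$ together with $E_\varphi u_h^\varphi=E_\varphi$ (one should also translate $a$, by applying a common element of $H$ to both factors, so that both supports land in $\bn$ and $ab$ is a genuine element of $\bigotimes_v^\bn\ga$).
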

\begin{proof}
We start by proving that if there exists a state $\varphi$ with the properties in the statement, then the system
fails to be $H$-abelian.  For $a, b$ as in statement, thanks to
Lemma \ref{commrulproj} we have that
$E_\varphi \pi_\varphi(a)E_\varphi$ and $E_\varphi \pi_\varphi(b)E_\varphi$ do not commute provided that $E_\varphi \pi_\varphi(a)E_\varphi\pi_\varphi(b)E_\varphi\neq 0$.
Therefore, we only need to make sure $E_\varphi \pi_\varphi(a)E_\varphi\pi_\varphi(b)E_\varphi$ is different from zero.\\
 By the Alaoglu-Birkhoff ergodic theorem, see \cite[Proposition 4.3.4]{BR1} there exists a net indexed by $\beta\in I$ such that
$E_\varphi=\lim_\beta \sum_{i=1}^{n_\beta}\lambda_i^\beta u_{h_i^\beta}^\varphi $ (in the strong topology)
where, for all indices $\beta$,  the finite set $\{h_i^\beta: i=1, \ldots, n_\beta\}$ lies in  $\widetilde{S}$ ({\rm i.e. } $h_i^\beta(r)\geq r$ for all dyadic $r$'s), $\lambda_i^\beta\geq 0$ with $\sum_{i=1}^{n_\beta}\lambda_i^\beta=1$. 
Then we have
\begin{align*}
&\langle E_\varphi \pi_\varphi(a)E_\varphi\pi_\varphi(b)E_\varphi\xi_\varphi, \xi_\varphi\rangle=
\langle \pi_\varphi(a)E_\varphi\pi_\varphi(b)\xi_\varphi, \xi_\varphi\rangle=\\
&\lim_\beta \langle \pi_\varphi(a)\sum_{i=1}^{n_\beta}\lambda_i^\beta u_{h_i^\beta}^\varphi\pi_\varphi(b)\xi_\varphi, \xi_\varphi\rangle= \lim_\beta\sum_{i=1}^{n_\beta} \lambda_i^\beta \langle \pi_\varphi(a) u_{h_i^\beta}^\varphi\pi_\varphi(b)\xi_\varphi, \xi_\varphi\rangle\,.
\end{align*}
We next show that, for all $\beta$'s,  $\langle \pi_\varphi(a) u_{h_i^\beta}^\varphi\pi_\varphi(b)\xi_\varphi, \xi_\varphi\rangle=\varphi(ab)$ for every $i=1, \ldots, n_\beta$.
Without any loss of generality, we can assume that the support of $a$ sits to the left of that of $b$.
We have
\begin{align*}
\langle \pi_\varphi(a) u_{h_i^\beta}^\varphi\pi_\varphi(b)\xi_\varphi, \xi_\varphi\rangle&=
\langle \pi_\varphi(a)\pi_\varphi(\a_{h_i }^\beta (b))\xi_\varphi, \xi_\varphi\rangle=
\langle \pi_\varphi(a \a_{h_i }^\beta (b))\xi_\varphi, \xi_\varphi\rangle\\
&=\varphi(ab)\,,
\end{align*}
where the last equality is got to by $H$-invariance (considering a map that fixes the support of $a$ pointwise and acts as $h_i^\beta$ on the support of $b$). 
In particular, we have shown the equality
$$\langle E_\varphi \pi_\varphi(a)E_\varphi\pi_\varphi(b)E_\varphi\xi_\varphi, \xi_\varphi\rangle=\varphi(ab)\,,$$
from which it follows at once that $ E_\varphi \pi_\varphi(a)E_\varphi\pi_\varphi(b)E_\varphi$ cannot be zero.\\
For the inverse implication, suppose the system fails to be $H$-abelian. Then there exists an $H$-invariant state
$\varphi$ and $a, b$ in $\bigotimes_v^{\bz\left[\frac{1}{2}\right] }\ga$ such that
$ E_\varphi \pi_\varphi(a)E_\varphi$ and $ E_\varphi \pi_\varphi(b)E_\varphi$
do not commute.\\ 
Now the same density argument employed in the proof of Lemma \ref{commrulproj}
says that the linear span of homogeneous localized elements is a norm dense $*$-subalgebra of the whole
$\bigotimes_v^{\bz\left[\frac{1}{2}\right] }\ga$. Therefore, we may suppose
that $a, b$ are both homogeneous and localized.
In addition, without any lack of generality occurring, we can take $a, b$ such that the support of $a$ lies entirely to the left of the support of $b$.
By Lemma \ref{commrulproj}, for  $ E_\varphi \pi_\varphi(a)E_\varphi$ and $ E_\varphi \pi_\varphi(b)E_\varphi$ not to commute, it is necessary
$\partial a, \partial b$ are such that $v(\partial a, \partial b)\neq 1$ and $ E_\varphi \pi_\varphi(a)E_\varphi\pi_\varphi(b)E_\varphi\neq 0$.
In particular, there exist $\xi, \eta$ in ${\rm Ran} E_\varphi$ such that
 $\langle E_\varphi \pi_\varphi(a)E_\varphi\pi_\varphi(b)E_\varphi \xi,\eta \rangle=\langle  \pi_\varphi(a)E_\varphi\pi_\varphi(b) \xi,\eta\rangle \neq 0$.\\
By the same reasoning as above we also have
\begin{equation*} 
\langle  \pi_\varphi(a)\pi_\varphi(b) \xi,\eta\rangle=\langle  \pi_\varphi(a)E_\varphi\pi_\varphi(b) \xi,\eta\rangle\neq 0\,.
\end{equation*}
Considering the Cartesian decomposition of  the $H$-invariant functional $\langle \cdot\xi, \eta\rangle$, we find that there also exists
a $H$-invariant hermitian linear funcional $\om$ such $\om(ab)\neq 0$. 
Now $\om$ uniquely decomposes as the difference $\om=\om_+- \om_-$, where $\om_{\pm}$ are positive linear functionals.
The uniqueness of such decomposition (the Hahn-Jordan decomposition) implies that $\om_{\pm}$ are still $H$-invariant, and at least one of the two satisfies the thesis.
\end{proof}

We have now endowed ourselves with all the necessary tools to completely determine the Choquet simplex of spreadable states. 
\begin{thm}\label{maindefinetti}
Let $v:\widehat{G}\times\widehat{G}\rightarrow\bt$ be a bicharacter such that either $v(\chi, \eta)=1$
or $v_S(\chi, \eta):=v(\chi, \eta)v(\eta, \chi)\neq 1$ for $\chi, \eta$ in $\Delta_v$.\\
The convex compact set $\cs^{\bj_\bn}(\bigotimes_v^\bn\ga)$ is the Bauer
simplex whose extreme set coincides with the set of all product states $\{\times\om: \om\in\cs_v(\ga)\}$.\\

\end{thm}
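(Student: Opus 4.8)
The plan is to transport the question to the automorphic extension, read off the simplex structure from the abstract theory already assembled, and reserve the genuine effort for locating the extreme boundary and showing it is closed. First I would invoke the affine homeomorphism $T$ of Proposition~\ref{affine} to replace $\cs^{\bj_\bn}(\bigotimes_v^\bn\ga)$ by $\cs^H(\bigotimes_v^{\bz\left[\frac{1}{2}\right]}\ga)$. The standing hypothesis on $v$ is precisely the one in Theorem~\ref{suffcondabelianness}, so the extended system $(\bigotimes_v^{\bz\left[\frac{1}{2}\right]}\ga, H, \a)$ is $H$-abelian; by Lemma~\ref{summary} this makes $\cs^H(\bigotimes_v^{\bz\left[\frac{1}{2}\right]}\ga)$, and hence $\cs^{\bj_\bn}(\bigotimes_v^\bn\ga)$, a Choquet simplex. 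It therefore only remains to identify the extreme points with the product states $\{\times\om:\om\in\cs_v(\ga)\}$ and to check that this set is weak-$*$ closed, the two facts that together yield the Bauer property.

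For the easy inclusion I would show every product state is extreme. Fix $\om\in\cs_v(\ga)$ and let $\varphi=\times\om$ be its $H$-invariant extension. Since a left block followed by a disjoint right block is already an ordered monomial, the defining property of $\times\om$ gives $\varphi(y^*\a_{\widetilde{\tau}_{k, 0}}(x))=\varphi(y^*)\varphi(x)$ for all localized $x,y$ once $k$ is large enough to separate the supports. Feeding this into the mean ergodic theorem for the shift $\widetilde{\tau}_{1,0}$ shows that the projection onto $\widetilde{\tau}_{1,0}$-invariant vectors is already $|\xi_\varphi\rangle\langle\xi_\varphi|$, whence $E_\varphi=|\xi_\varphi\rangle\langle\xi_\varphi|$ in view of $\bc\xi_\varphi\subseteq{\rm Ran}\,E_\varphi$; by Lemma~\ref{summary} $\varphi$ is then ergodic, i.e. extreme.

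For the reverse inclusion let $\varphi$ be extreme. By $H$-abelianness and Lemma~\ref{summary} we have $E_\varphi=|\xi_\varphi\rangle\langle\xi_\varphi|$, and I set $\om:=\varphi\circ i_0$. The heart of the argument is the cluster identity $\varphi(a\,\a_h(b))=\varphi(a)\varphi(b)$, valid whenever $h\in\widetilde{S}$ carries the localized homogeneous $b$ to a block lying entirely to the right of the block of $a$. To establish it I would start from $\varphi(a)\varphi(b)=\langle E_\varphi\pi_\varphi(b)\xi_\varphi,\pi_\varphi(a^*)\xi_\varphi\rangle$, use $E_\varphi u^\varphi_{\widetilde{\tau}_{k,0}}=E_\varphi$ together with translation invariance to replace $b$ by $\a_{\widetilde{\tau}_{k,0}}(b)$ for arbitrarily large $k$, and then approximate $E_\varphi$ by finite convex combinations $\sum_i\lambda_i u^\varphi_{h_i}$ with $h_i\in\widetilde{S}$ via Alaoglu--Birkhoff. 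Since every $h\in\widetilde{S}$ satisfies $h(r)\geq r$, for $k$ large each $h_i\widetilde{\tau}_{k,0}$ separates the supports, so spreadability makes every term $\varphi(a\,\a_{h_i\widetilde{\tau}_{k,0}}(b))$ equal to the common separated value; summing identifies that value with $\varphi(a)\varphi(b)$. By spreadability the separated value coincides with the adjacent one, and a routine induction peeling off the leftmost site then gives $\varphi(i_{j_1}(a_1)\cdots i_{j_n}(a_n))=\om(a_1)\cdots\om(a_n)$ for $j_1<\cdots<j_n$, i.e. $\varphi=\times\om$; as $\times\om$ exists, the corollary to Theorem~\ref{existenceprod} guarantees $\om\in\cs_v(\ga)$.

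Finally, to obtain the Bauer property I would note that $\om\mapsto\times\om$ is weak-$*$ continuous (product states are prescribed by $\om(a_1)\cdots\om(a_n)$ on the dense set of ordered monomials) and injective (restrict to a single site); since $\cs_v(\ga)$ is weak-$*$ compact, being closed in $\cs(\ga)$, its image is compact, hence closed, and the map is a homeomorphism onto the extreme boundary. The main obstacle is the reverse inclusion: upgrading bare $H$-ergodicity to the full factorization across separated blocks, where the semigroup-only nature of spreadability forces the $E_\varphi u^\varphi_{\widetilde{\tau}_{k,0}}=E_\varphi$ maneuver to reconcile genuine $H$-invariance with the one-sided shifts available in $\widetilde{S}$.
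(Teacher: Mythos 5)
Your proposal is correct and follows essentially the same route as the paper: Proposition \ref{affine} together with Theorem \ref{suffcondabelianness} and Lemma \ref{summary} gives the Choquet simplex structure, the Alaoglu--Birkhoff/clustering argument over the semigroup $\widetilde{S}$ followed by induction shows every extreme state is a product state, and compactness of $\cs_v(\ga)$ plus continuity and injectivity of $\om\mapsto\times\om$ yields the Bauer property. The one place you go beyond the paper's write-up is the ``easy inclusion'': the paper never explicitly verifies that every product state is extreme (it only proves ${\rm Extr}\subseteq\{\times\om:\om\in\cs_v(\ga)\}$ and that the latter set is weak-$*$ closed), whereas your mean-ergodic-theorem argument showing that $E_\varphi$ is the rank-one projection onto $\bc\xi_\varphi$ when $\varphi=\times\om$ supplies exactly the missing half of the claimed coincidence of the two sets.
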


\begin{proof}
We start by observing that, under the hypotheses on $v$,  Theorem \ref{suffcondabelianness} guarantees that the extended system
is $H$-abelian, and thus $\cs^{\bj_\bn}(\bigotimes_v^\bn\ga)$ is a Choquet simplex by Proposition \ref{affine}.\\
We next show that any extreme state $\varphi$ of  $\cs^{\bj_\bn}(\bigotimes_v^\bn\ga)$  is an infinite product state for some
$\om$ in $\cs(\ga)$. This can be seen by showing that the restriction of any extreme state in $\cs^H\big(\bigotimes_v^{\bz\left[\frac{1}{2}\right]})$ to $\bigotimes_v^\bn\ga$
is an infinite product state.\\
If $\varphi$ is such a state, then
$E_\varphi$ is the rank-one projection onto $\bc\xi_\varphi$. 
The first thing to do is to apply the Alaloglu-Birkhoff theorem to the family  of isometries
$\{T_h^\varphi: h\in \widetilde{S}\}$. This
yields a net $\{S_\gamma: \gamma\in I\}$, whose terms are finite convex combinations of the form $S_\gamma=\sum_{k=1}^{n_\gamma} \lambda_{k}^\gamma  T_{h_k^\gamma}^\varphi$, for some $\lambda_k^\gamma\geq 0$ with $\sum_{k=1}^{n_\gamma} \lambda_k^\gamma=1$, which is
strongly convergent to  $E_\varphi^S$. 
Let now $a, b$ be fixed elements of $\bigotimes_v^\bn\ga$. For each $\gamma$ in $I$, define $b_\gamma:=\sum_{k=1}^{n_\gamma} \lambda_{k}^\gamma  \a_{h_k^\gamma}(b)$. We have
\begin{align}
\begin{split}
\label{clustering}
\lim_\g \varphi(ab_\g)&= \lim_\g \langle \pi_\varphi(a b_\g) \xi_\varphi, \xi_\varphi \rangle\\
&= \lim_\g  \langle  \pi_\varphi(a)  \sum_{k=1}^{n_\g}\lambda_{k}^\g T_{h_k^\g}^\varphi \pi_\om(b)\xi_\varphi, \xi_\varphi\rangle
\\
&=\langle  \pi_\varphi(a)   E_{\bc\xi_\varphi}\pi_\varphi(b)\xi_\varphi, \xi_\varphi\rangle=
\varphi(a)\varphi(b)\,.
\end{split} .
\end{align}
Let $\om$ be  state on $\ga$ defined by
$\om(a):=\varphi(i_l(a))$, $a$ in $\ga$, where $i_l$ is any of the embeddings
of $\ga$ into  $\bigotimes_v^\bn\ga$. 
Note that $\om$ is well
defined, {\it i.e.} the definition does not depend on $l$, because $\varphi$ is spreadable.
We aim to show the equality
$$\varphi(i_{j_1}(a_1)i_{j_2}(a_2)\cdots i_{j_n}(a_n))=\om(a_1)\om(a_2)\cdots\om(a_n)$$
for every $n\in\bn$, for every $j_1< j_2< \ldots< j_n\in\bn$, and for every $a_1, a_2, \ldots, a_n$ in $\ga$.
This task can be accomplished by induction on $n$. For $n=1$, there is nothing to prove.
Let us move on to take the inductive step. We will show that the equality holds with $n+1$ if
it holds with $n$. To this end, we start by observing that, for all $n\in\bn$, $j_1<j_2<\ldots<j_{n+1}$ in $\bn$, and $a_1, \ldots, a_n, b\in\ga$, $H$-invariance of $\varphi$
gives
$$\varphi(i_{j_1}(a_1)\cdots i_{j_n}(a_n)i_{j_{n+1}}(b))=\varphi(i_{j_1}(a_1)\cdots i_{j_n}(a_n) i_{h_k^\g(j_{n+1})}(b))\,,$$
where, for each $\g$ in $I$, the $h_k^\g$'s are the monotone functions in $\widetilde{S}$ (in particular, $h_k^\g(m)\geq m$ for all $m$ in $\bn$) appearing in the definition of the net $S_\g$ we introduced above. By summing the equalities above on all $k$'s between $1$ and $n_\g$, we find
\begin{align*}
&\varphi(i_{j_1}(a_1)\cdots i_{ j_n}(a_n)i_{j_{n+1}}(b))\\
=&\sum_{k=1}^{n_\g}\lambda_k^\g\varphi(i_{j_1}(a_1)\cdots i_{j_n}(a_n) i_{h_k^\g(j_{n+1})}(b))\\=&\varphi(i_{j_1}(a_1)\cdots i_{j_n}(a_n)b_\g)
\end{align*}
\medskip
where $b_\g:=\sum_{k=1}^{n_\g}\lambda_k^\g i_{h_k^\g(j_{n+1})}(b)=\sum_{k=1}^{n_\g} \lambda_{k}^\g \a_{h_k^\g}(i_{j_{n+1}}(b))$.\\
\medskip
Now thanks to \ref{clustering} we have
$$\lim_\g \varphi(i_{j_1}(a_1)\cdots i_{j_n}(a_n)b_\g)=\varphi(i_{j_1}(a_1)\cdots i_{j_n}(a_n))\om(b)\,,$$
and we are done because by our inductive hypothesis we have
$$\varphi(i_{j_1}(a_1)\cdots i_{j_n}(a_n))\om(b)=\om(a_1)\cdots\om(a_n)\om(b)\, .$$
The last thing to do is to notice that the set of product states is closed in the $*$-weak topology. 
\end{proof}
Before restating the above result in terms of the corresponding processes with distribution that factorizes through an infinite twisted tensor product, we need to observe that $\cs_v(\ga)\ni\om\mapsto \times\om\in{\rm Extr}(\cs^{\bj_\bn}(\bigotimes_v^\bn \ga)) $ is a homeomorphism between compact Hausdorff spaces.

\begin{cor}\label{definettiprocesses}
Let $v:\widehat{G}\times\widehat{G}\rightarrow\bt$ be a bicharacter such that either $v(\chi, \eta)=1$
or $v_S(\chi, \eta)\neq 1$ for $\chi, \eta$ in $\Delta_v$.
For any spreadable process whose distribution $\varphi$ factorizes through $\bigotimes_v^\bn\ga$, there exists a unique Borel probability measure $\mu$ on $\cs_v(\ga)$ such that
$$\varphi(i_{j_1}(a_1)\cdots i_{j_n}(a_n))=\int_{\cs_v(\ga)} \om(a_1)\cdots\om(a_n)\, {\rm d}\mu(\om)\,,$$
for all $n$ in $\bn$, $j_1, \ldots, j_n\in\bn$, $a_1, \ldots, a_n$ in $\ga$, where $i_j: \ga\rightarrow\ast_\bn\ga$ is the $j$-th canonical embedding of $\ga$ into $\ast_\bn\ga$.
\end{cor}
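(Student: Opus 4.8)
The plan is to recognize the corollary as the translation, in terms of finite-dimensional distributions, of the Choquet decomposition afforded by Theorem \ref{maindefinetti}. First I would lift the distribution to the twisted product: since $\varphi$ factorizes as $\varphi=\widetilde\varphi\circ p$ for a state $\widetilde\varphi$ on $\bigotimes_v^\bn\ga$, and since the action of $\bj_\bn$ on $\ast_\bn\ga$ descends to the twisted product with $p$ intertwining the two actions, spreadability of $\varphi$ forces $\widetilde\varphi\circ\a_h=\widetilde\varphi$ for every $h\in\bj_\bn$; that is, $\widetilde\varphi\in\cs^{\bj_\bn}(\bigotimes_v^\bn\ga)$.

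Next I would invoke Theorem \ref{maindefinetti}: under the stated hypotheses on $v$, the set $\cs^{\bj_\bn}(\bigotimes_v^\bn\ga)$ is a Bauer simplex whose (closed, hence compact) extreme boundary is exactly $\{\times\om:\om\in\cs_v(\ga)\}$. By the defining property of a Bauer simplex, $\widetilde\varphi$ is the barycenter of a unique Borel probability measure $\widetilde\mu$ supported on this extreme boundary. Transporting $\widetilde\mu$ through the homeomorphism $\cs_v(\ga)\ni\om\mapsto\times\om\in{\rm Extr}(\cs^{\bj_\bn}(\bigotimes_v^\bn\ga))$ recalled just above yields a Borel probability measure $\mu$ on $\cs_v(\ga)$, and uniqueness of $\mu$ is inherited from uniqueness of $\widetilde\mu$ together with the fact that this map is a homeomorphism of compact Hausdorff spaces.

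It then remains to evaluate the barycentric formula on ordered monomials. For each fixed $x=i_{j_1}(a_1)\cdots i_{j_n}(a_n)$ the functional $\psi\mapsto\psi(p(x))$ is weak-$*$ continuous and affine on $\cs^{\bj_\bn}(\bigotimes_v^\bn\ga)$, so the barycentric identity gives $\widetilde\varphi(p(x))=\int \psi(p(x))\,{\rm d}\widetilde\mu(\psi)$. Rewriting the integral over $\cs_v(\ga)$ via the pushforward and using the defining property $\times\om(i_{j_1}(a_1)\cdots i_{j_n}(a_n))=\om(a_1)\cdots\om(a_n)$ for $j_1<\cdots<j_n$ produces exactly the claimed formula, once one recalls $\varphi=\widetilde\varphi\circ p$.

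The argument is essentially an assembly of results already in place, so I do not expect a genuine obstacle; the points requiring care are bookkeeping ones. I would be careful to distinguish the embeddings $i_j\colon\ga\to\ast_\bn\ga$ appearing in the statement from those into the twisted product, and to observe that the commutation rules make the formula for non-ordered indices a formal consequence of the ordered case, since both sides acquire the same phases $v(\partial a_r,\partial a_s)$ under reordering. The only analytic input beyond Theorem \ref{maindefinetti} is the weak-$*$ continuity of point evaluations, which guarantees that the barycentric identity may be read off on each monomial separately.
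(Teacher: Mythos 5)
Your proposal is correct and is essentially the paper's own argument: the corollary is stated there as an immediate restatement of Theorem \ref{maindefinetti}, using precisely the two ingredients you assemble, namely the Bauer simplex property (unique representing measure on the closed extreme boundary) applied to the lifted state $\widetilde\varphi\in\cs^{\bj_\bn}(\bigotimes_v^\bn\ga)$, and the homeomorphism $\cs_v(\ga)\ni\om\mapsto\times\om$ onto the extreme boundary recorded just before the corollary. One small imprecision: under reordering the right-hand side, being a product of scalars, acquires no phases at all; the unordered case instead works because whenever $\om(a_1)\cdots\om(a_n)\neq 0$ the relevant grades lie in ${\rm supp}_G\,\om$, where $v\equiv 1$ by definition of $\cs_v(\ga)$, so the phases on the left are trivial $\mu$-almost everywhere on the set where the integrand is non-zero.
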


\subsection{Examples}
If $G=\bt$, then $\widehat{G}=\bz$, and  all bicharacters $v:\bz\times\bz\rightarrow\bt$ are of the form
$v_\alpha(k, l)= e^{2\pi i \a kl}$, $k, l$ in $\bz$, for some real $\alpha$. In particular, all bicharacters of
$\bz$ are symmetric. If $\frac{\alpha}{2\pi}$ is irrational, $\Delta_{v_\alpha}=\{0\}$. If $\frac{\alpha}{2\pi}$ is rational,
$\Delta_{v_\alpha}$ is a subgroup of $\bz$ and $v\upharpoonright_{\Delta_{v_\alpha}\times \Delta_{v_\alpha}}=1$, see \cite[Lemma 3.3]{CDGR}. Therefore, the sufficient conditions in Theorem \ref{maindefinetti} for $H$-abelianness to hold are satisfied in both cases. \\
For example, we can take $\ga= C(\bt)$ with the $\bt$-grading induced by rotations, {\it i.e.}
$\gamma_z (f)\, (w)=f(zw)$, $z, w$ in $\bt$. The corresponding infinite twisted tensor product is
(isomorphic with) the infinite non-commutative torus, whose spreadable states have already been addressed in
\cite{CDGR}. What turned out from the analysis carried out in that paper, see \cite[Theorem 3.8]{CDGR}, but also follows from the general picture given in the present, Theorem \ref{maindefinetti}, is that
extreme spreadable states are infinite product of a single state (measure) $\om$ on $C(\bt)$ which has to be invariant for the action of the grading restricted to $\Delta_{v_\alpha}^\perp$. In the irrational case, 
$\Delta_{v_\alpha}^\perp=\bt$, and thus the trace is the only (extreme) spreadable state.
In the rational case $\Delta_{v_\alpha}^\perp=\{z\in\bt: z^{n_0}=1\}$ is a finite group ($n_0$ is an integer associated with $\alpha$, see \cite[Lemma 3.3]{CDGR}), and many more extreme spreadable states exist.
\begin{rem}\label{cyclicgroups}
Gradings induced by finite cyclic groups, say $\bz_n$, are a particular case of the above class of examples, in that $\bz_n$ can be seen as a quotient
of $\bz$.
\end{rem}
We now go back to the general framework  focussing 
on those bicharacters $v:\widehat{G}\times\widehat{G}\rightarrow\bt$ that  are trivial on $\Delta_v\times\Delta_v$. The resulting
 de Finetti theorem is basically classical in a sense that we  specify in the corollary below.
\begin{cor}
If $v: \widehat{G}\times\widehat{G}\rightarrow\bt$ is a bicharacter such that
$v\upharpoonright_{\Delta_v\times\Delta_v}=1$, then the map
$$\cs^{\bj_\bn}(\otimes(\ga^{{\Delta_v}^\perp}))\ni\om\mapsto \om\circ E\in \cs^{\bj_\bn}(\otimes_v^\bn\ga)$$
is an affine homeomorphism between Bauer simplices, where $E: \otimes_v^\bn\ga\rightarrow\otimes(\ga^{{\Delta_v}^\perp})$ is the product conditional expectation.
\end{cor}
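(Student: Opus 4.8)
The plan is to exhibit the stated map $\Theta(\om):=\om\circ E$ as an affine homeomorphism and then transport the simplex structure already at our disposal. Observe first that the hypothesis $v\upharpoonright_{\Delta_v\times\Delta_v}=1$ forces $v(\chi,\eta)=1$ for all $\chi,\eta\in\Delta_v$, so condition (i) of Theorem \ref{maindefinetti} holds and $\cs^{\bj_\bn}(\bigotimes_v^\bn\ga)$ is a Bauer simplex with extreme set $\{\times\om:\om\in\cs_v(\ga)\}$. The same hypothesis makes $\Delta_v$ a (closed) subgroup of $\widehat{G}$, whence $\Delta_v^\perp\subset G$ is a closed subgroup with $(\Delta_v^\perp)^\perp=\Delta_v$ and $\ga^{\Delta_v^\perp}=\overline{\bigoplus_{\chi\in\Delta_v}V_\chi}$. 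Since the commutation rules \eqref{commrules} are trivial whenever both degrees lie in $\Delta_v$, the sites commute on $\otimes(\ga^{\Delta_v^\perp}):=C^*(i_j(\ga^{\Delta_v^\perp}):j\in\bn)$, so this is a genuine (untwisted) tensor product sitting inside $\bigotimes_v^\bn\ga$, and it is plainly preserved by the $\bj_\bn$-action. The product conditional expectation is $E:=\int_{\prod_j\Delta_v^\perp}\beta^{\vec g}\,d\vec g$, the average of the site automorphisms $\beta^{(j)}_g$ determined by $\beta^{(j)}_g(i_k(a))=i_k(\gamma_g a)$ if $k=j$ and $i_k(a)$ otherwise; these are well defined automorphisms precisely because $\gamma_g$ preserves degrees and hence respects \eqref{commrules}. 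Then $E$ is a faithful conditional expectation onto $\otimes(\ga^{\Delta_v^\perp})$, and on ordered monomials it factorizes as $E(i_{j_1}(a_1)\cdots i_{j_n}(a_n))=i_{j_1}(E_0 a_1)\cdots i_{j_n}(E_0 a_n)$, where $E_0:=\int_{\Delta_v^\perp}\gamma_g\,d\mu$ is the single-site expectation onto $\ga^{\Delta_v^\perp}$.

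A short reindexing argument shows $E\circ\alpha_h=\alpha_h\circ E$ for every $h\in\bj_\bn$: conjugating the site actions by $\alpha_h$ only relabels the coordinates through the injective, cofinite-range map $h$, and the corresponding pushforward of Haar measure on $\prod_j\Delta_v^\perp$ is again Haar measure on the relevant (finitely many) coordinates. Granting this, $\Theta$ is well defined into $\cs^{\bj_\bn}(\bigotimes_v^\bn\ga)$, for $\om\circ E$ is a state ($E$ being unital and completely positive) and $\om\circ E\circ\alpha_h=\om\circ\alpha_h\circ E=\om\circ E$, using that $\alpha_h$ restricts to the spreading endomorphism on $\otimes(\ga^{\Delta_v^\perp})$ and that $\om$ is $\bj_\bn$-invariant there. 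It is manifestly affine and weak-$*$ continuous, and it is injective because $E$ is surjective onto $\otimes(\ga^{\Delta_v^\perp})$.

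The key step is the factorization identity $\varphi=\varphi\circ E$ for every spreadable state $\varphi$ on $\bigotimes_v^\bn\ga$. I would first verify it on the extreme states $\times\om$, $\om\in\cs_v(\ga)$. For such $\om$, taking $\eta_1=\eta_2$ in the defining condition of $\cs_v(\ga)$ gives ${\rm supp}_G\,\om\subset\Delta_v$, which is equivalent to $\om=\om\circ E_0$. Combined with the factorization of $E$ above, a one-line computation on ordered monomials yields $(\times\om)\circ E(i_{j_1}(a_1)\cdots i_{j_n}(a_n))=\prod_k\om(E_0 a_k)=\prod_k\om(a_k)=\times\om(i_{j_1}(a_1)\cdots i_{j_n}(a_n))$, so $\times\om=(\times\om)\circ E$. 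Since the set $\{\varphi:\varphi=\varphi\circ E\}$ is weak-$*$ closed and convex and contains every extreme point of the Bauer simplex $\cs^{\bj_\bn}(\bigotimes_v^\bn\ga)$, the Krein--Milman theorem propagates the identity to all spreadable states.

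With this in hand, surjectivity of $\Theta$ is immediate: given a spreadable $\varphi$, its restriction $\om:=\varphi\upharpoonright_{\otimes(\ga^{\Delta_v^\perp})}$ is $\bj_\bn$-invariant and $\Theta(\om)=\om\circ E=\varphi\circ E=\varphi$. The inverse of $\Theta$ is thus the restriction map, which is weak-$*$ continuous, so $\Theta$ is an affine homeomorphism. As the codomain is a Bauer simplex by Theorem \ref{maindefinetti} and affine homeomorphisms preserve Choquet simplices together with the closedness of their extreme boundaries, $\cs^{\bj_\bn}(\otimes(\ga^{\Delta_v^\perp}))$ is a Bauer simplex as well, completing the proof. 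I expect the two genuinely technical points to be the commutation $E\circ\alpha_h=\alpha_h\circ E$ (the measure-pushforward bookkeeping for $h$ only injective with cofinite range) and the passage from extreme states to all spreadable states in the key step; everything else is bookkeeping with the universal property and the eigenspace decomposition.
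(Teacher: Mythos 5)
Your proof is correct; it reaches the statement by a genuinely different mechanism than the paper, and the difference is worth spelling out. The paper's own proof is two lines: it observes that $v\upharpoonright_{\Delta_v\times\Delta_v}=1$ makes $\Delta_v$ a subgroup, so that $\overline{{\rm span}}\{a\in\ga:\partial a\in\Delta_v\}=\ga^{\Delta_v^\perp}$ is a $C^*$-subalgebra; it then recalls that the proof of Theorem \ref{maindefinetti} already identifies the two extreme boundaries (product states $\otimes\,\om_0$, $\om_0\in\cs(\ga^{\Delta_v^\perp})$, on the untwisted chain versus product states $\times\om$, $\om\in\cs_v(\ga)$, on the twisted one, matched through $\om=\om_0\circ E_0$ with $E_0$ the single-site expectation onto $\ga^{\Delta_v^\perp}$), and it concludes by the structure theory of Bauer simplices: a continuous affine map restricting to a homeomorphism of the closed extreme boundaries of two Bauer simplices is an affine homeomorphism. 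You avoid this last Choquet-theoretic ingredient altogether. Instead you prove the factorization identity $\varphi=\varphi\circ E$ for \emph{every} spreadable $\varphi$: first on the extreme points $\times\om$ (via ${\rm supp}_G\,\om\subset\Delta_v$, equivalently $\om=\om\circ E_0$, together with the factorization of $E$ over ordered monomials), and then for all spreadable states by Krein--Milman, since $\{\varphi\in\cs^{\bj_\bn}(\otimes_v^\bn\ga):\varphi=\varphi\circ E\}$ is weak-$*$ closed and convex and contains the extreme boundary. Surjectivity, injectivity and bicontinuity of $\om\mapsto\om\circ E$ then come out by hand, and the Bauer property of the domain is transported through the homeomorphism rather than quoted from St\o rmer's theorem or from Theorem \ref{maindefinetti} with trivial twist. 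Both routes lean on Theorem \ref{maindefinetti} for the classification of extreme spreadable states, so neither is independent of it; what yours buys is a more elementary and self-contained extension step (Krein--Milman instead of the identification of a Bauer simplex with the probability measures on its boundary), an explicit construction of $E$ with the equivariance $E\circ\alpha_h=\alpha_h\circ E$ checked rather than assumed, and the useful intermediate fact, not isolated in the paper, that every spreadable state on $\otimes_v^\bn\ga$ factors through the product conditional expectation. What the paper's route buys is brevity, since the boundary correspondence and the Bauer property of both simplices are already in hand at that point of the text.
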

\begin{proof}
We start by noting that under the hypothesis $v\upharpoonright_{\Delta_v\times\Delta_v}=1$, $\Delta_v$ is a subgroup of $\widehat{G}$. In particular,  $\overline{{\rm span}}\{a\in \ga: \partial a\in\Delta_v\}$ is a
$C^*$-subalgebra and coincides with $\ga^{\Delta_v^\perp}$.
Over the course of the proof of Theorem \ref{maindefinetti}, we showed that the map in the statement is a homeomorphism
between the extreme sets of the two simplices. The full conclusion the follows from the fact that our simplices are 
Bauer simplices.
\end{proof}

In particular, if the sample  $C^*$-algebra $\ga$ is commutative, then $\ga^{\Delta_v^\perp}$ is commutative as well, and the Bauer simplex of spreadable states on the whole chain is thus affinely homeomorphic
with that of  the distributions of spreadable sequences of random variables  taking their values on the spectrum of $\ga^{\Delta_v^\perp}$, and to this convex set the classical theory applies.\\
Notably, this situation includes the analysis of spreadable (or equivalently exchangeable) states on the CAR algebra.\\
If $\ga$ is no longer commutative, $\ga^{\Delta_v^\perp}$ will be in general non-commutative. However,
exchangeable states on an infinite tensor product have been thoroughly classified in the classical work of St\o rmer
\cite{Sto}.\\

Our techniques also apply to different settings, for example to gradings induced by finite groups
of the form $\bz_{n_1}\times....\times\bz_{n_k}$, with $n_1, \ldots, n_k$ odd, in light of the following.

\begin{prop}\label{finprod}
Any symmetric bicharacter $v$ on the product $\bz_{n_1}\times....\times\bz_{n_k}$, where $n_1, \ldots, n_k$ are all odd, satisfies the sufficient conditions of $H$-abelianness in Theorem \ref{suffcondabelianness}, namely
$v_S(\chi, \eta)\neq 1$ for $\chi, \eta$ in $\Delta_v$.
\end{prop}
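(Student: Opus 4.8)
The plan is to reduce the assertion to the elementary fact that in a finite abelian group of odd exponent the only square root of $1$ in the value group is $1$ itself. First I would recall that, since each $n_i$ is odd, the group $\bz_{n_1}\times\cdots\times\bz_{n_k}$ and hence its Pontryagin dual $\widehat{G}$ (which is isomorphic to it) has odd exponent $N:=\mathrm{lcm}(n_1,\ldots,n_k)$. For any bicharacter $v$ and any fixed $\chi\in\widehat{G}$, the map $\eta\mapsto v(\chi,\eta)$ is by definition a character of $\widehat{G}$, so its values are $N$-th roots of unity; in particular $v(\chi,\eta)^N=1$ for all $\chi,\eta\in\widehat{G}$.

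Next I would use the assumed symmetry $v(\chi,\eta)=v(\eta,\chi)$ to rewrite the symmetrized bicharacter as $v_S(\chi,\eta)=v(\chi,\eta)v(\eta,\chi)=v(\chi,\eta)^2$. Thus the condition to be checked—that for $\chi,\eta\in\Delta_v$ one has either $v(\chi,\eta)=1$ or $v_S(\chi,\eta)\neq 1$, i.e. the dichotomy in the hypotheses of Theorem \ref{suffcondabelianness}—amounts to showing that $v(\chi,\eta)^2=1$ forces $v(\chi,\eta)=1$. So suppose $v(\chi,\eta)^2=1$. Since also $v(\chi,\eta)^N=1$ with $N$ odd, we have $\gcd(2,N)=1$, and Bézout yields integers $a,b$ with $2a+Nb=1$; therefore $v(\chi,\eta)=\bigl(v(\chi,\eta)^2\bigr)^a\bigl(v(\chi,\eta)^N\bigr)^b=1$. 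This is precisely the required dichotomy, so $v$ meets the sufficient condition of Theorem \ref{suffcondabelianness}.

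The argument is genuinely short and I do not expect a real obstacle: the entire point is that oddness of the $n_i$ means $\widehat{G}$ has no $2$-torsion, so $-1$ can never occur as a value of $v$ and a nontrivial value can never equal its own inverse. I would only take care to stress two small points in the write-up: that the restriction to $\chi,\eta\in\Delta_v$ is inessential, the dichotomy in fact holding for every pair in $\widehat{G}\times\widehat{G}$; and that symmetry of $v$ is exactly what identifies $v_S$ with the square $v(\chi,\eta)^2$, which is the single place where the hypothesis of symmetry enters.
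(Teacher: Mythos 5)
Your proof is correct and follows essentially the same route as the paper's: both reduce the dichotomy, via symmetry giving $v_S(\chi,\eta)=v(\chi,\eta)^2$, to the fact that values of $v$ are roots of unity of odd order and hence cannot be nontrivial square roots of $1$ (the paper phrases this as ``$v(\chi,\eta)$ can never equal $-1$,'' you phrase it via B\'ezout with the odd exponent $N$). The two formulations are the same elementary observation, so there is nothing substantive to change.
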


\begin{proof}
We start by observing that, given a general symmetric bicharacter $v$, for $v_S$ not to be $1$ on $\Delta_v\times\Delta_v$ is enough to verify
that $v(\chi, \eta)$ is never $-1$, for $\chi, \eta$ in $\Delta_v$.\\
There is yet another general fact to recall. If $g$ in $G$ is an element of finite order $n$ and
$\chi\in\widehat{G}$ is a character, then $\chi(g)\in\bt$ is a $n$-th root of unity.
We now have all is needed to conclude: every element of $G=\bz_{n_1}\times....\times\bz_{n_k}$
has odd order if all $n_j$'s are odd; in particular, it follows that $v(\chi, \eta)$ is a root of unity of odd order for all
$\chi, \eta$ in $\bz_{n_1}\times....\times\bz_{n_k}$, hence $v(\chi, \eta)$ can never equal $-1$.
\end{proof}
\begin{rem}
The argument employed in the above proposition also applies to infinite direct products $\times_{i\in\bn} \bz_{n_i}$, as long as all
$n_i$'s are odd.
\end{rem}

The cases covered in Proposition \ref{finprod} include examples where $v$ is not constantly  $1$ on $\Delta_v\times \Delta_v$. As a result, the simplex of spreadable states on the corresponding chain cannot be obtained out of a single usual tensor product, unlike all known examples as far as we know. Below we discuss a concrete example of this sort.
 
\begin{example}\label{definettifoliated}
Consider $G=\bz_3\times\bz_3$, and $v: (\bz_3\times\bz_3)^2\rightarrow\bt$ the bicharacter given by
$v(\chi, \eta) := e^{\frac{2\pi i}{3}\langle A\chi, \eta\rangle}$, for $\chi, \eta$ in $\bz_3 \bz_3$, where
$A$ is the $2$ by $2$ matrix $A := \left( \begin{array}{cc} 0 &1\\ 1 &1\\ \end{array} \right)$.
By direct computation, one can verify that $\Delta_v=\{(0, 0), (1, 1), (2, 2), (1, 0), (2, 0)\}$.
Note that $v$ is not $1$ on $\Delta_v\times\Delta_v$, for $v((1, 0), (1, 1))=e^{\frac{2\pi i}{3}}$.
We also point out that $\Delta_v$ contains two maximal subgroups $G_1, G_2$ (maximal w.r.t. the property that $v\upharpoonright_{G_i\times G_i}=1$ for $i=1, 2$), and these are $G_1=\{(0,0), (1, 1), (2, 2)\}$ and $G_2=\{(0, 0), (1, 0), (2, 0)\}$.\\
Consider an infinite chain based on the sample $\ga$ graded by $\bz_3\times\bz_3$. For $i=1, 2$, let
$\gb_i$ be the $C^*$-subalgebra of the chain given by $\otimes_\bn \ga^{G_i}$. 
The simplex of spreadable states on this chain is then the Bauer simplex whose extreme set is homeomorphic with   $\cs^{G_1}(\ga)\cup \cs^{G_2}(\ga)\subset \cs(\ga)$. In particular, one can take
$\ga= C(\bz_3\times\bz_3)$ acted upon by $\bz_3\times\bz_3$ by rotation. With such an $\ga$, $\cs^{G_1}(\ga)\cup \cs^{G_2}(\ga)$ is a finite set and, more importantly, $\cs^{G_1}(\ga)$ and $ \cs^{G_2}(\ga)$ are different sets, meaning there is not a single tensor subalgebra all spreadable states arise from by composing with the canonical conditional expectation onto it.
\end{example}
\subsection{Braided Parafermions}\label{parafermion}
Our techniques also cover the models of braided parafermions considered in \cite{BJLW}, which we next recall. For each natural $d\geq 2$, set $q:=e^{\frac{2\pi i}{d}}$.  The sample algebra $\ga$ of the models is the full matrix algebra of $d\times d$ matrices $
\mathbb{M}_d(\mathbb{C})$, thought of as the universal $C^*$-algebra generated by two unitaries $c_1, c_2$ such that
$c_j^d=1$, $j=1, 2$, and $c_1c_2=qc_2c_1$. The sample is conceived of as a $\bz_d$-graded
$C^*$-algebra, with the grading being induced by the automorphism (of order $d$) $\theta$ uniquely determined by
$\theta(c_j):=q c_j$, $j=1, 2$. To obtain the chain, we also need to specify what is bicharacter to consider.
This is $v: \bz_d\times\bz_d\rightarrow\bt$ given by $v(k, l)= q^{kl}$, $k, l$ in $\bz_d$.
As is straightforward to check, with these data, $\otimes_v^\bn\ga$ is just  the $C^*$-algebra $PF_{\infty}$ considered in \cite{BJLW}, that is the universal $C^*$-algebra generated by a sequence $\{c_j; j\in\bn\}$ of unitaries such that $c_j^d=1$, for all $j$ in $\bn$, and $c_jc_k=qc_kc_j$ for all $j<k$.\\
The authors then consider a suitable action of the braid group on
$PF_{\infty}$ by inner automorphisms, \cite[Formulas (10), (11), (12)]{BJLW}, and study the corresponding invariant states. Furthermore, the action they define is compatible with Gohm and Kostler's definition of braidability.
What the authors find, among other things, is that the extreme braidable states are infinite product states of a single state $\om$ on $\mathbb{M}_d({\bc})$.
 However, possibly not all such states are braidable w.r.t. the inner action, and it is actually an open problem to determine which of these states are braidable. 
There are two cases depending on the prime factorization of $d$. If $d=p_1p_2\cdots p_n$, with $p_i$ being prime numbers, then $\om$ is forced to be $\bz_d$-invariant for its infinite product to exist, whereas
if the factorization of $d$ contains a power of a prime with exponent at least $2$, then more sample
states give product states, {\it cf.} \cite[Theorem 17, Theorem 21]{BJLW}.
In our language, the two cases naturally correspond to whether the isotropy subgroup $\Delta_v$ is trivial or not, see \cite[Lemma 3.3]{CDGR} and Remark \ref{cyclicgroups}.\\
Since $PF_\infty$ is an infinite twisted tensor product, we are in a position to obtain a thorough characterization of all spreadable states on  $PF_\infty$.
Precisely, the Bauer simplex
of all spreadable states can be determined with our techniques, namely Theorem \ref{maindefinetti} and
Remark \ref{cyclicgroups}. Furthermore, the set of all braidable states on the parafermion algebra is a Bauer simplex as well as being a face of the simplex of the spreadable states, as we
show below.

\begin{cor}\label{braidspread}
The set of all braidable states of  $PF_\infty$ w.r.t. the inner action is a Bauer simplex which is also a face of the Bauer simplex of all spreadable states on  $PF_\infty$.
\end{cor}

\begin{proof}
Since the proof of both Lemma \ref{commrulproj} and Theorem \ref{suffcondabelianness} applies to the action of the braid group as well, we have that
the convex set of braidable states is again a Choquet simplex. Moreover, it is a Bauer simplex in that its boundary is closed, being the intersection of the set of all product states with the set of all states invariant under the action of the braid group.\\
In order to prove that braidable states make up a face of the larger simplex, it is enough to reason as in the proof of Proposition 3.5 in \cite {CDRCAR},  and observe that extreme states of the smaller set are extreme in the larger as extreme braidable states are product states by the analysis in \cite{BJLW}.
\end{proof}

\section{Stationary states on infinite twisted tensor products}
This last section is concerned with stationary states on infinite twisted products. 
We start with a simple observation which needs no proof.
\begin{prop}
The restriction map $T: \cs^\tau\big(\bigotimes_v^{\bz}\ga\big)\rightarrow \cs^{\tau}(\bigotimes_v^\bn\ga)$
$$T(\varphi)=\varphi\lceil_{\bigotimes_v^\bn\ga}$$
establishes an affine homeomorphism of compact convex sets.
\end{prop}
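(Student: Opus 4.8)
The plan is to mirror the first half of the proof of Proposition \ref{affine}, which is actually simpler in the present situation because we only have to handle the single automorphism $\a_\tau$ rather than the whole group $H$. First I would record that $T$ is well defined, affine and weak-$*$ continuous. Since $\tau(k)=k+1$ maps $\bn$ into $\bn$, the canonical copy $\bigotimes_v^\bn\ga\subset\bigotimes_v^\bz\ga$ satisfies $\a_\tau(\bigotimes_v^\bn\ga)\subseteq\bigotimes_v^\bn\ga$; hence the restriction of a $\tau$-invariant state is again $\tau$-invariant, so $T$ indeed takes values in $\cs^{\tau}(\bigotimes_v^\bn\ga)$. Affinity and weak-$*$ continuity of a restriction map are automatic.

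Next I would prove injectivity. Every element of $\bigotimes_v^\bz\ga$ is a norm limit of localized elements, and any localized $x$ with support in a finite set $F\subset\bz$ satisfies $\a_\tau^n(x)\in\bigotimes_v^\bn\ga$ as soon as $n$ is large enough that $F+n\subset\bn$. For $\varphi\in\cs^{\tau}(\bigotimes_v^\bz\ga)$ we then get $\varphi(x)=\varphi(\a_\tau^n(x))=T(\varphi)(\a_\tau^n(x))$, so $\varphi$ is determined on a dense set by $T(\varphi)$, whence $T$ is injective.

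Surjectivity is the only step involving a genuine construction, carried out exactly as in Proposition \ref{affine}. Given $\varphi_0\in\cs^{\tau}(\bigotimes_v^\bn\ga)$, for each $n$ let $\Sigma_n$ be the $*$-isomorphism of $\bigotimes_{v,\,k=-n}^\infty\ga_k$ onto $\bigotimes_v^\bn\ga$ induced by shifting indices by $n$; this preserves the order of the indices and hence the commutation rules \eqref{commrules}, so it is well defined. Set $\varphi_n:=\varphi_0\circ\Sigma_n$. Because the shift by $n$ and the shift by $1$ commute as index maps and $\varphi_0\circ\a_\tau=\varphi_0$, the restriction of $\varphi_{n+1}$ to $\bigotimes_{v,\,k=-n}^\infty\ga_k$ equals $\varphi_n$; thus $\{\varphi_n\}$ is a consistent family, defining a state on the dense subalgebra $\bigcup_n\bigotimes_{v,\,k=-n}^\infty\ga_k$, which extends by continuity to a state $\widetilde{\varphi}$ on $\bigotimes_v^\bz\ga$ with $T(\widetilde{\varphi})=\varphi_0$. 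The same commutativity of shifts together with $\varphi_0\circ\a_\tau=\varphi_0$ gives $\widetilde{\varphi}\circ\a_\tau=\widetilde{\varphi}$ on localized elements, hence everywhere, so $\widetilde{\varphi}\in\cs^{\tau}(\bigotimes_v^\bz\ga)$.

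Finally, since $\cs^{\tau}(\bigotimes_v^\bz\ga)$ and $\cs^{\tau}(\bigotimes_v^\bn\ga)$ are weak-$*$ compact (being closed subsets of the respective state spaces) and Hausdorff, a continuous affine bijection between them is automatically an affine homeomorphism, and no separate verification of continuity of $T^{-1}$ is needed. There is no real obstacle here: the only point requiring care is the consistency of $\{\varphi_n\}$ and the $\tau$-invariance of the extension, and both reduce to the single identity $\varphi_0\circ\a_\tau=\varphi_0$ combined with the commutativity of the shift maps -- which is precisely why the statement can be recorded as needing no proof.
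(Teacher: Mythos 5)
Your proof is correct and follows exactly the argument the paper has in mind: the proposition is stated as ``needing no proof'' precisely because it is the single-shift specialization of the extension argument in the proof of Proposition \ref{affine} (the consistent family $\varphi_n=\varphi_0\circ\tau^n$ on $\bigotimes_{v\,\,k=-n}^\infty\ga_k$), which you reproduce faithfully, adding the routine injectivity and compactness observations. No gaps.
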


%
The support of an extreme shift-invariant state is subject to the following compatibility condition with the bicharacter, which is reminiscent of
the necessary and sufficient condition guaranteeing the existence of the product state of a given state.

\begin{prop}\label{tauextprop}
Suppose $(\bigotimes_v^{\bz}\ga, \tau)$ is
$\bz$-abelian.
If $\varphi$ is an extreme shift-invariant state on $\bigotimes_v^\bz\ga$, then  $v(\chi, \eta)=1$ for $\chi, \eta$ in
${\rm supp }_G\,\varphi$.
\end{prop}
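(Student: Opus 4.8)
The plan is to show that for any pair of characters $\chi,\eta\in{\rm supp}_G\,\varphi$ the phase $v(\chi,\eta)$ must equal $1$, by reducing the question to the geometry of the invariant vector $\xi_\varphi$ and exploiting the twisted commutation rules \eqref{commrules} asymptotically along the shift. First I would record that one may take the testing elements to be localized and homogeneous: by definition of the spectral support there are homogeneous $a,b$ with $\partial a=\chi$, $\partial b=\eta$ and $\varphi(a),\varphi(b)\neq 0$, and the averaging device used in the proof of Lemma \ref{commrulproj} (replacing a localized approximant $x_n$ of $a$ by $\int_G\gamma_g(x_n)\overline{\partial a(g)}\,{\rm d}\mu_G(g)$) produces localized homogeneous elements of the same degree whose $\varphi$-values converge to $\varphi(a)$ and $\varphi(b)$. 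Hence we may assume $a,b$ localized homogeneous with $\varphi(a)\varphi(b)\neq 0$.

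Next I would use the hypotheses to pin down the invariant projection. Since $(\bigotimes_v^\bz\ga,\tau)$ is $\bz$-abelian and $\varphi$ is extreme, hence ergodic, Lemma \ref{summary} (condition (iii)) gives that $E_\varphi$ is the rank-one projection onto $\bc\xi_\varphi$, so that $E_\varphi\pi_\varphi(x)E_\varphi=\varphi(x)E_\varphi$ for every $x$. The heart of the argument is then the exact analogue, for the single shift $\tau$ on $\bigotimes_v^\bz\ga$, of the commutation rule in Lemma \ref{commrulproj}: for homogeneous $a,b$ one has
$$E_\varphi\pi_\varphi(a)E_\varphi\pi_\varphi(b)E_\varphi=v(\partial a,\partial b)\,E_\varphi\pi_\varphi(b)E_\varphi\pi_\varphi(a)E_\varphi.$$
I would prove this by the same scheme: for localized homogeneous $a,b$ the twisted commutator $[\tau^{-n}(a),b]_{\partial a,\partial b}$ vanishes identically once $n$ is large enough that $\tau^{-n}(a)$ is supported strictly to one side of $b$ (this is immediate from \eqref{commrules}), and the general case follows by density together with the Alaoglu--Birkhoff theorem \cite[Proposition 4.3.4]{BR1}, writing $E_\varphi$ as a strong limit of convex combinations of the implementing unitaries $u_n^\varphi$ and then translating one factor far along the shift.

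Finally I would combine the two ingredients. Evaluating the displayed relation on $\xi_\varphi$ and pairing with $\xi_\varphi$, and using that $E_\varphi$ is the projection onto $\bc\xi_\varphi$ together with $\tau$-invariance of $\varphi$, the left-hand side equals $\varphi(a)\varphi(b)$ while the right-hand side equals $v(\chi,\eta)\,\varphi(b)\varphi(a)$; since $\varphi(a)\varphi(b)\neq 0$ this forces $v(\chi,\eta)=1$. As $\chi,\eta$ were arbitrary in ${\rm supp}_G\,\varphi$, the conclusion follows, in perfect parallel with the necessity part of Theorem \ref{existenceprod}.

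The step I expect to be the main obstacle is establishing the twisted commutation rule for the bare shift. Unlike the richer semigroup $\widetilde{S}$ available in Lemma \ref{commrulproj}, the group $\bz$ offers no maps fixing the support of one element while displacing the other, so the relation cannot be obtained from exact invariance and must genuinely rest on asymptotic abelianness; one must be careful to translate in the direction that makes the twisted commutator $[\,\cdot\,,b]_{\partial a,\partial b}$ vanish, and to interleave this one-sided translation with the two-sided Alaoglu--Birkhoff averaging over $\bz$. I would also remark that the precise phase appearing in the commutation rule is immaterial for the conclusion: obtaining any determination of $v$ at $(\chi,\eta)$, or at $(\eta,\chi)$, equal to $1$ for all pairs drawn from the support already yields triviality of $v$ on ${\rm supp}_G\,\varphi\times{\rm supp}_G\,\varphi$.
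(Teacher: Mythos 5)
Your proposal is correct and follows essentially the same route as the paper's proof: extremality plus $\bz$-abelianness make $E_\varphi$ the rank-one projection onto $\bc\xi_\varphi$, so the compressions $E_\varphi\pi_\varphi(a)E_\varphi$, $E_\varphi\pi_\varphi(b)E_\varphi$ are the scalars $\varphi(a)E_\varphi$, $\varphi(b)E_\varphi$ with nonzero product, and the twisted commutation rule for compressions then forces $v(\chi,\eta)=1$. The only difference is one of explicitness: the paper invokes the commutation rule tacitly (via Lemma \ref{commrulproj}, stated for the $H$-system on $\bigotimes_v^{\bz\left[\frac{1}{2}\right]}\ga$), whereas you correctly flag that its shift analogue on $\bigotimes_v^{\bz}\ga$ must be re-proved by the same asymptotic Alaoglu--Birkhoff argument, with due care about the direction of translation.
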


\begin{proof}
Let $\chi, \eta$ in $\widehat{G}$ two characters belonging to the spectral support of $\varphi$.
By definition, there exist homogeneous $a, b$ in $\bigotimes_v^\bz\ga$ such that
$\partial a=\chi$, $\partial b=\eta$ and $\varphi(a), \varphi(b)\neq 0$.
We claim that $ E_\varphi\pi_\varphi(a)E_\varphi\pi_\varphi(b)E_\varphi$ is different from zero. Indeed, we have
\begin{align*}
&\langle E_\varphi\pi_\varphi(a)E_\varphi\pi_\varphi(b)E_\varphi \xi_\varphi, \xi_\varphi \rangle=
\langle \pi_\varphi(a )E_\varphi\pi_\varphi(b) \xi_\varphi, \xi_\varphi \rangle=\\
&\langle \pi_\varphi(a) \langle \pi_\varphi(b)\xi_\varphi, \xi_\varphi \rangle\xi_\varphi, \xi_\varphi  \rangle=
\varphi(a)\varphi(b)\neq 0\,
\end {align*}
since $E_\varphi$ is the projection onto $\bc\xi_\varphi$ by extremality of $\varphi$ and $\bz$-abelianness.
Now this implies $v(\chi, \eta)=1$, for otherwise $E_\varphi\pi_\varphi(a)E_\varphi$ and $E_\varphi\pi_\varphi(b)E_\varphi$
would not commute.
\end{proof}
The main goal we want to accomplish now is to say when stationary states make up a convex compact set affinely homeomorphic with the Poulsen simplex, that is the unique metrizable Choquet simplex whose extreme set is dense, see \cite{P, Lind}.
To this end, we need the following result, which gives information on the abundance of product states.

\begin{lem}\label{existence}
For any $\chi$ in $\Delta_v$ such that $V_\chi\neq 0$, there exists a product
 state $\varphi$  such that ${\rm supp}_G\,\varphi$ contains $\chi$ (and ${\rm supp}_G\, \varphi\subseteq \{\chi^n: n\in\bz\}\subset \widehat{G}$).
\end{lem}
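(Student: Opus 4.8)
The plan is to reduce the statement to the construction of a single suitable state $\om$ on the sample algebra $\ga$, and then to promote it to the infinite product state $\varphi:=\times\om$ by the infinite-product corollary to Theorem \ref{existenceprod}. The key preliminary observation is that, since $\chi\in\Delta_v$, one has $v(\chi,\chi)=1$, and bilinearity of $v$ gives $v(\chi^m,\chi^n)=v(\chi,\chi)^{mn}=1$ for all $m,n\in\bz$; hence $v$ is identically $1$ on $\{\chi^n:n\in\bz\}\times\{\chi^n:n\in\bz\}$. Consequently, any state $\om\in\cs(\ga)$ with ${\rm supp}_G\,\om\subseteq\{\chi^n:n\in\bz\}$ automatically lies in $\cs_v(\ga)$, so that $\times\om$ exists. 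It therefore suffices to construct $\om\in\cs(\ga)$ with $\chi\in{\rm supp}_G\,\om\subseteq\{\chi^n:n\in\bz\}$.

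To build such an $\om$, I would first fix a nonzero $a\in V_\chi$ (possible by hypothesis) and choose a state $\om_0\in\cs(\ga)$ with $\om_0(a)\neq 0$; such a state exists because states separate the points of $\ga$. Next, consider the closed (hence compact) subgroup $K:=\{g\in G:\chi(g)=1\}$, whose annihilator is $K^\perp=\{\chi^n:n\in\bz\}$ by Pontryagin duality, recalling that $\widehat{G}$ is discrete. Averaging the grading automorphisms over $K$ yields the conditional expectation $E_K(x):=\int_K\gamma_g(x)\,{\rm d}\mu_K(g)$ onto $\ga^K$, and I would set $\om:=\om_0\circ E_K$, which is a state as the composition of a state with the conditional expectation $E_K$.

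The support properties of $\om$ then follow from the computation $E_K(b)=\big(\int_K\eta(g)\,{\rm d}\mu_K(g)\big)b$ for $b$ homogeneous of degree $\eta$: the integral equals $0$ when $\eta\upharpoonright_K$ is a nontrivial character, i.e.\ when $\eta\notin K^\perp=\{\chi^n:n\in\bz\}$, and equals $1$ otherwise. Hence $\om$ vanishes on every $V_\eta$ with $\eta\notin\{\chi^n:n\in\bz\}$, giving ${\rm supp}_G\,\om\subseteq\{\chi^n:n\in\bz\}$, while $E_K$ fixes $V_\chi$, so that $\om(a)=\om_0(a)\neq 0$ and $\chi\in{\rm supp}_G\,\om$.

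Finally I would transfer these two facts to $\varphi=\times\om$. Since $\varphi(i_{j_1}(a_1)\cdots i_{j_n}(a_n))=\om(a_1)\cdots\om(a_n)$ on ordered homogeneous monomials, nonvanishing forces each $\partial a_l\in{\rm supp}_G\,\om\subseteq\{\chi^n:n\in\bz\}$, so the total degree $\prod_l\partial a_l$ lies in $\{\chi^n:n\in\bz\}$; this yields ${\rm supp}_G\,\varphi\subseteq\{\chi^n:n\in\bz\}$. On the other hand $\varphi(i_0(a))=\om(a)\neq 0$ with $\partial(i_0(a))=\chi$ shows $\chi\in{\rm supp}_G\,\varphi$. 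I do not expect any deep obstacle here: all the ingredients are standard averaging and separation arguments, and the only point requiring a little care is keeping the spectral support of $\om$ on $\ga$ distinct from that of the product state $\varphi$ on $\bigotimes_v^\bn\ga$, and verifying both the lower bound ($\chi\in{\rm supp}_G\,\varphi$) and the upper bound (${\rm supp}_G\,\varphi\subseteq\{\chi^n:n\in\bz\}$) for the latter.
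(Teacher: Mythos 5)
Your proof is correct and follows essentially the same route as the paper: you average the grading action over the annihilator of $H=\{\chi^n:n\in\bz\}$ (your $K=\ker\chi$ is exactly the paper's $H^\perp$, and your $\ga^K$ is the paper's $\ga_H$), compose with a state not vanishing at a chosen nonzero $a\in V_\chi$, and invoke Theorem \ref{existenceprod}. The only difference is that you spell out two points the paper leaves implicit — the triviality of $v$ on $H\times H$ via the bicharacter property, and the transfer of the support bounds to the product state $\times\om$ — which is a matter of explicitness, not of method.
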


\begin{proof}
Such a state $\varphi$ can be obtained by applying Theorem \ref{existenceprod},  once we have exhibited
a state $\om$ on $\ga$ such that $\chi\in {\rm supp}_G\, \om$ and
$v\upharpoonright_{{\rm supp}_G\, \om\times{\rm supp}_G\, \om}=1$.
To this aim, let $H=\{\chi^n: n\in\bz\}\subset \widehat{G}$
be the subgroup generated by $\chi$ and let $\ga_H:=\overline{{\rm span}}\{a: \partial a\in H\}$.
Let $F: \ga\rightarrow \ga_H$ be the canonical conditional expectation of $\ga$ onto $\ga_H$, that is the one obtained
by averaging over $H^\perp\subset G$ (w.r.t. the Haar measure of $H^\perp$) the action of the grading.
The sought state $\om$ can be taken of the form $\om=\om_0\circ F$, where
$\om_0$ is a state on $\ga_H$ such that $\om_0(a)\neq 0$ for some
non-zero $a$ in $\ga_H$ with $\partial a=\chi$.  
\end{proof}

To ease the notation, we set $C_\tau:=\cs^\tau\big(\bigotimes_v^{\bn}\ga\big)$.
\begin{thm}\label{sufficientPoulsen}
If the bicharacter $v: \widehat{G}\times\widehat{G}\rightarrow\bt$ is such that $v\upharpoonright_{\Delta_v\times \Delta_v}=1$, then the compact convex set $C_\tau$ is (affinely homeomorphic with) the Poulsen simplex.\\
Furthermore, if the dynamical system $(\ga, G, \gamma)$ is ergodic and the action is faithful,
then  $C_\tau$ is (affinely homeomorphic with) the Poulsen simplex if and only
if $v\upharpoonright_{\Delta_v\times \Delta_v}=1$.
\end{thm}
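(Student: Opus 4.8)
The plan is to prove the two implications separately. The forward (sufficiency) direction splits into showing that $C_\tau$ is a metrizable Choquet simplex and that its extreme points are dense, the latter being the heart of the matter; the converse is an obstruction argument built on Proposition~\ref{tauextprop}. Throughout I would work on $\bigotimes_v^\bz\ga$ and transfer conclusions to $C_\tau=\cs^\tau(\bigotimes_v^\bn\ga)$ through the restriction affine homeomorphism of the proposition preceding Proposition~\ref{tauextprop}, and I assume $\ga$ separable so that $C_\tau$ is metrizable.

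First I would establish that, under $v\upharpoonright_{\Delta_v\times\Delta_v}=1$, the system $(\bigotimes_v^\bz\ga,\tau)$ is $\bz$-abelian. The shift is twisted asymptotically abelian exactly as in \eqref{twistesasab} (for localized homogeneous $a,b$ the twisted commutator of $\tau^n(a)$ with $b$ vanishes once their supports are disjoint), so the argument of Lemma~\ref{commrulproj}, adapted to $\tau$ on $\bigotimes_v^\bz\ga$ using that $E_\varphi$ is $\tau$-invariant, gives $E_\varphi\pi_\varphi(a)E_\varphi\pi_\varphi(b)E_\varphi=v(\partial a,\partial b)E_\varphi\pi_\varphi(b)E_\varphi\pi_\varphi(a)E_\varphi$. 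As in the proof of Theorem~\ref{suffcondabelianness}, for $\partial a\notin\Delta_v$ the odd-component computation \eqref{oddcomponent} forces $E_\varphi\pi_\varphi(a)E_\varphi=0$, while for $\partial a,\partial b\in\Delta_v$ the hypothesis gives $v(\partial a,\partial b)=1$ and hence commutativity; thus $E_\varphi\pi_\varphi(\bigotimes_v^\bz\ga)E_\varphi$ is abelian. By Lemma~\ref{summary}, $C_\tau$ is a (metrizable) Choquet simplex, and Proposition~\ref{tauextprop} applies: every extreme $\varphi\in C_\tau$ satisfies ${\rm supp}_G\varphi\subseteq\Delta_v$ (if $\chi\in{\rm supp}_G\varphi$ then $v(\chi,\chi)=1$). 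Integrating over the ergodic decomposition, I obtain ${\rm supp}_G\psi\subseteq\Delta_v$ for \emph{every} stationary state $\psi$.

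The main step is density of the ergodic states. Fix $\psi\in C_\tau$, a localized observable $x$ supported in $[0,M)$, and $\eps>0$; I approximate $\psi(x)$ by an ergodic state. For $N\gg M$ let $\psi_{[0,N)}$ be the restriction of $\psi$ to $\ga^{(N)}:=C^*(i_0(\ga),\dots,i_{N-1}(\ga))$, and group the chain into consecutive length-$N$ blocks, identifying $\bigotimes_v^\bz\ga$ with $\bigotimes_v^\bz\ga^{(N)}$ equipped, by bilinearity of $v$, with the \emph{same} bicharacter $v$ on the diagonal grading (the cross-block phase between total degrees $\chi,\eta$ is $v(\chi,\eta)$). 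Since ${\rm supp}_G\psi_{[0,N)}\subseteq{\rm supp}_G\psi\subseteq\Delta_v$ and $v\upharpoonright_{\Delta_v\times\Delta_v}=1$, the infinite-product version of Theorem~\ref{existenceprod} yields the block-product state $\Xi_N:=\times\psi_{[0,N)}$ on $\bigotimes_v^\bz\ga^{(N)}$, whose marginal on each block is $\psi_{[0,N)}$. Being a product over blocks, $\Xi_N$ is $\tau^N$-invariant and clusters (far-apart blocks factorize), hence is $\tau^N$-ergodic; so by the standard fact that the orbit-average of a $\tau^N$-ergodic state is $\tau$-ergodic, $\psi_N:=\frac1N\sum_{r=0}^{N-1}\Xi_N\circ\tau^r$ is extreme in $C_\tau$. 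Finally, for each $r$ such that the window $[r,r+M)$ lies inside one block one has $\Xi_N(\tau^r x)=\psi(\tau^r x)=\psi(x)$, using $\tau$-invariance of $\psi$ and that $\Xi_N$ agrees with $\psi$ on each block; this holds for all but $O(M)$ of the $N$ indices $r$, so $|\psi_N(x)-\psi(x)|\le \frac{O(M)}{N}\,\|x\|\to0$. Thus $\psi\in\overline{{\rm Extr}(C_\tau)}$, and as $\psi$ is arbitrary, $C_\tau$ is the Poulsen simplex.

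For the converse, assume the $G$-action faithful and ergodic and suppose $v(\chi_0,\eta_0)\neq1$ for some $\chi_0,\eta_0\in\Delta_v$; I show $C_\tau$ is not Poulsen. If the system is not $\bz$-abelian, then by Lemma~\ref{summary} $C_\tau$ is not even a simplex, so assume it is $\bz$-abelian. By Proposition~\ref{fullspectrum} every eigenspace is nonzero, so Lemma~\ref{existence} furnishes product (hence stationary) states $\varphi_1,\varphi_2$ with ${\rm supp}_G\varphi_1\subseteq\langle\chi_0\rangle\ni\chi_0$ and ${\rm supp}_G\varphi_2\subseteq\langle\eta_0\rangle\ni\eta_0$; choose homogeneous $b_1,b_2$ with $\partial b_1=\chi_0$, $\partial b_2=\eta_0$, $\varphi_1(b_1)\neq0$, $\varphi_2(b_2)\neq0$. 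The weak$^*$-continuous function $\psi\mapsto|\psi(b_1)|\,|\psi(b_2)|$ vanishes on every extreme $\varphi\in C_\tau$: by Proposition~\ref{tauextprop} such $\varphi$ cannot contain both $\chi_0,\eta_0$ in its spectral support (as $v(\chi_0,\eta_0)\neq1$), so $\varphi(b_1)=0$ or $\varphi(b_2)=0$. Yet for generic $t\in(0,1)$ the stationary state $\psi_0:=t\varphi_1+(1-t)\varphi_2$ has $\psi_0(b_1)\neq0$ and $\psi_0(b_2)\neq0$ (each is affine and non-constant in $t$, so vanishes for at most one value). Hence the function is strictly positive at $\psi_0$ but zero on ${\rm Extr}(C_\tau)$, so the extreme points are not dense and $C_\tau$ is not Poulsen.

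The principal obstacle is the density of ergodic states, precisely because the twist is \emph{long-range}: the block-product $\Xi_N$ need not exist for an arbitrary block state, since its support could meet degrees on which $v$ is nontrivial. What unlocks the argument is the structural fact, derived from $\bz$-abelianness together with Proposition~\ref{tauextprop}, that the spectral support of \emph{every} stationary state lies in $\Delta_v$, where $v$ is trivial by hypothesis; this is exactly what guarantees the existence of $\Xi_N$ and lets the classical block-i.i.d.\ construction proceed. The remaining points to verify with care are the $\tau$-ergodicity of the orbit-average $\psi_N$ and the separability/metrizability ensuring that the uniqueness characterization of the Poulsen simplex may be invoked.
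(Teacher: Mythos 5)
Your strategy is sound and, where the paper argues in detail, yours runs parallel to it. For the simplex property, the paper simply invokes Theorem \ref{suffcondabelianness}, leaving implicit the adaptation from $H$-abelianness of the dyadic extension to $\bz$-abelianness of the shift; your explicit rerun of Lemma \ref{commrulproj} and the case analysis of Theorem \ref{suffcondabelianness} for $\tau$ is exactly what that invocation conceals. Your converse is the paper's argument (reduce to the $\bz$-abelian case via Lemma \ref{summary}, produce $\varphi_1,\varphi_2$ from Proposition \ref{fullspectrum} and Lemma \ref{existence}, conclude via Proposition \ref{tauextprop}), with you supplying the details the paper compresses into one sentence: the weak$^*$-continuous function $\psi\mapsto|\psi(b_1)|\,|\psi(b_2)|$ vanishing on extreme points and the generic-$t$ observation make the non-approximability claim into an actual proof. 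The real divergence is the density of extreme points: the paper disposes of it by citation to \cite[Theorem 4.3]{CDGR}, whereas you give a self-contained block-product/orbit-average argument. Its viability in the twisted setting rests on your observation, via Proposition \ref{tauextprop} plus Choquet decomposition (metrizability suffices here), that \emph{every} stationary state has spectral support inside $\Delta_v$; that observation is correct and is indeed the key that makes block products available.

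There is, however, one genuine gap in the block step: you \emph{assert} the identification of $\bigotimes_v^\bz\ga$ with $\bigotimes_v^\bz\ga^{(N)}$, where $\ga^{(N)}=C^*(i_0(\ga),\dots,i_{N-1}(\ga))$. Bilinearity of $v$ does give that the blocks inside the chain satisfy the relations \eqref{commrules} with the bicharacter evaluated at total degrees, but universality then yields only a surjection $\Phi_N:\bigotimes_v^\bz\ga^{(N)}\to\bigotimes_v^\bz\ga$; the block product state $\Xi_N$ produced by Theorem \ref{existenceprod} lives on the \emph{universal} object, and to push it down to the chain you need $\Phi_N$ injective. This is precisely the issue the paper must confront in Lemma \ref{subuniversal}, whose proof uses separation by product states (Remark \ref{separate}) and hence nuclearity of the fixed-point algebra; for blocks this would require control of $(\ga^{(N)})^G$, which is not covered by the standing assumption that $\ga^G$ is nuclear. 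The gap is fillable with the paper's own tools, most cleanly by constructing $\Xi_N$ directly on the chain: rerun the proof of Theorem \ref{existenceprod} with the blocks in place of the two factors, i.e., average over block-independent copies of the annihilator of the subgroup generated by ${\rm supp}_G\,\psi_{[0,N)}\subseteq\Delta_v$ to obtain a conditional expectation of $\bigotimes_v^\bz\ga$ onto a genuinely commuting (untwisted, since $v$ is trivial there) infinite tensor product of block fixed-point algebras, and compose an ordinary infinite product state with it. With that repair, and writing out the routine facts you flagged (clustering implies $\tau^N$-ergodicity via Lemma \ref{summary}, the orbit average of a $\tau^N$-ergodic state is $\tau$-ergodic, separability of $\ga$ for metrizability), your proof is complete.
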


\begin{proof}
We first observe that the condition on $v$ guarantees that $\Delta_v$ is a subgroup of $\widehat{G}$. Indeed,
for $\chi, \eta\in \Delta_v$ ({\it i.e.} $v(\chi, \chi)=v(\eta, \eta)=1$), we have $v(\chi\eta, \chi\eta)=v(\chi, \chi) v(\eta, \eta)v(\chi,\eta )v(\eta, \chi)=1$.
This implies that $\ga_{\Delta_v} := \overline{{\rm span}}\{a\in\ga: \partial a\in \Delta_v\}$ is
a $C^*$-subalgebra of the sample algebra $\ga$.\\
Furthermore, the condition on the bicharacter also allows us to apply Theorem \ref{suffcondabelianness}, which means $C_\tau$ is a (metrizable) Choquet simplex.
All we need to do is show that ${\rm Extr}(C_\tau)$ is dense in $C_\tau$.
This can be done in the exact same way as in \cite[Theorem 4.3]{CDGR} by exploiting the fact that
$\ga_v^\bz$ contains a copy of the maximal infinite tensor product of $\ga_{\Delta_v}$ with itself.\\
In light of what we saw above, we need only prove that, under the assumption $(\ga, G, \gamma)$ is ergodic and the action $\gamma$ is faithful, if  $v\upharpoonright_{\Delta_v\times \Delta_v}=1$ fails to hold, then the extreme set of $C_\tau$ is not dense in $C_\tau$.
We first note that we do not harm the generality if we assume $\bz$-abelianness. Indeed, if $\bz$-abelianness does not hold, then $C_\tau$ is not even a Choquet simplex by Lemma \ref{summary}.\\
Let $\chi, \eta$ in $\Delta_v$ such that $v(\chi, \eta)=1$.
Since $V_\chi$ and $V_\eta$ are both non-zero thanks to Proposition \ref{fullspectrum},  Lemma \ref{existence} yields two distinct (extreme) shift-invariant states
$\varphi_1, \varphi_2$ with $\chi \in {\rm supp}\,{\varphi_1}$ and $\eta \in {\rm supp}\,{\varphi_2}$.
We now consider any non-trivial convex combination $\om := t\varphi_1+(1-t)\varphi_2$ ($0<t<1$).
From Proposition \ref{tauextprop}, it follows that $\om$ cannot be approximated by a sequence of extreme states.
\end{proof}

\appendix
\section{}

The following result is a (known) useful criterion to decide if an automorphism of a given $C^*$-algebra
passes to a quotient.
\begin{lem}\label{quotientlemma}
Let $\ga$ be a (unital) $C^*$-algebra and $I\subset\ga$ a (proper) closed two-sided ideal,
and $p:\ga\rightarrow\ga/I$ be the canonical projection onto the quotient $C^*$-algebra.
Let $\Phi$ be an automorphism of $\ga$. If there exists a faithful state $\varphi$ on 
$\ga/I$ such that $\varphi\circ p$ is a $\Phi$-invariant state of $\ga$, then $\Phi(I)\subseteq I$.
\end{lem}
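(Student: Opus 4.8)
The plan is to identify the ideal $I$ with the left kernel of the state $\psi:=\varphi\circ p$ and then to exploit the $\Phi$-invariance of $\psi$ together with the fact that $\Phi$ is a $*$-automorphism (hence isometric and $*$-preserving). First I would set $\psi:=\varphi\circ p$, which by hypothesis is a $\Phi$-invariant state on $\ga$, and introduce its left kernel $N_\psi:=\{a\in\ga:\psi(a^*a)=0\}$, which is always a closed left ideal of $\ga$.

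The key step is to show that $N_\psi=I$, and this is precisely where the faithfulness of $\varphi$ enters. For any $a\in\ga$ one has $\psi(a^*a)=\varphi(p(a^*a))=\varphi(p(a)^*p(a))$, since $p$ is a $*$-homomorphism. Because $\varphi$ is faithful on $\ga/I$, this quantity vanishes if and only if $p(a)^*p(a)=0$, i.e. if and only if $p(a)=0$, that is $a\in\ker p=I$. Hence $N_\psi=I$; note that the inclusion $I\subseteq N_\psi$ holds for free, and it is only the reverse inclusion that requires faithfulness.

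With this identification in hand, the conclusion follows immediately from invariance. For $a\in I=N_\psi$, using that $\Phi$ is a $*$-automorphism (so $\Phi(a)^*\Phi(a)=\Phi(a^*)\Phi(a)=\Phi(a^*a)$) together with $\psi\circ\Phi=\psi$, I would compute
$$
\psi\big(\Phi(a)^*\Phi(a)\big)=\psi\big(\Phi(a^*a)\big)=(\psi\circ\Phi)(a^*a)=\psi(a^*a)=0\,,
$$
so that $\Phi(a)\in N_\psi=I$. As $a\in I$ was arbitrary, this gives $\Phi(I)\subseteq I$, as desired. There is no genuine obstacle in this argument; the only delicate point is the equality $N_\psi=I$, which hinges entirely on the faithfulness of $\varphi$, for without it one obtains merely $I\subseteq N_\psi$ and the invariance argument no longer localizes the displacement of $I$ under $\Phi$.
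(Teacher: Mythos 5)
Your proof is correct. The identification $N_\psi=I$ is exactly right: the inclusion $I\subseteq N_\psi$ is automatic, and faithfulness of $\varphi$ gives the reverse, since $\psi(a^*a)=\varphi\bigl(p(a)^*p(a)\bigr)=0$ forces $p(a)^*p(a)=0$ and hence $p(a)=0$. The invariance step is then a one-line computation, and nothing is missing.

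Your route differs from the paper's in a way worth noting. The paper identifies $I$ with the kernel of the GNS representation $\pi_\om$ of $\om=\varphi\circ p$ (using that this representation is unitarily equivalent to $\pi_\varphi\circ p$ and that a faithful state has faithful GNS representation), and then proves invariance of $\ker\pi_\om$ by a slightly longer argument: from $\pi_\om(a)=0$ one gets $\om(b^*a^*ab)=0$ for all $b$, invariance turns this into $\pi_\om(\Phi(a))\pi_\om(\Phi(b))\xi_\om=0$, and one concludes $\pi_\om(\Phi(a))=0$ by density of $\{\pi_\om(\Phi(b))\xi_\om:b\in\ga\}$, which uses surjectivity of $\Phi$ together with cyclicity of $\xi_\om$. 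You instead work with the left kernel $N_\psi=\{a:\psi(a^*a)=0\}$, which under the faithfulness hypothesis coincides with $I$ on the nose; this makes the displacement argument a direct substitution, $\psi(\Phi(a)^*\Phi(a))=\psi(\Phi(a^*a))=\psi(a^*a)=0$, with no representation theory and no density argument. What your approach buys is brevity and elementarity: the GNS construction is bypassed entirely, and the only structural input is that $\Phi$ is a $*$-homomorphism (surjectivity of $\Phi$ is never used). What the paper's approach buys is that $\ker\pi_\om$ is manifestly a two-sided ideal, so the identification with $I$ feels canonical; in your argument $N_\psi$ is a priori only a left ideal, but this is harmless since you prove the equality $N_\psi=I$ directly rather than relying on abstract properties of left kernels.
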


\begin{proof}
The GNS representation of $\om\doteq \varphi\circ p$ is (unitarily equivalent to) $\pi_\varphi\circ p$.
In particular, since $\varphi$ is faithful, $I={\rm Ker}\,\pi_\om$. Therefore, we need to make sure $\Phi({\rm Ker}\,\pi_\om)\subseteq{\rm Ker}\,\pi_\om$.
Let $a$ in $\ga$ be such that $\pi_\om (a)=0$. Then $\pi_\om(a)\pi_\om(b)\xi_\om=0$, for every $b$ in $\ga$, hence $\om(b^*a^*ab)=0$. Because $\om$ is $\Phi$-invariant, we have
$\om(\Phi(b^*a^*ab))=0$, that is $\pi_\om(\Phi(a))\pi_\om(\Phi(b))\xi_\om=0$, hence
$\pi_\om(\Phi(a))=0$ by density of $\{\pi_\om(\Phi(b))\xi_\om: b\in\ga\}\subset\ch_\om$.
\end{proof}

\section*{Acknowledgments}
\noindent
V.A. is partially supported by Sapienza Universit\`a di Roma (Progetto di Ateneo Dipartimentale 2024 "New research trends in Mathematics at Castelnuovo").
The authors are all members of INDAM-GNAMPA.
S.D.V. and S.R. acknowledge the support of INdAM-GNAMPA Project 2024 “Probabilita’ quantistica e applicazioni”, 
 the support of the Italian INDAM-GNAMPA Project Code CUP\_E55F22333270001,
 Centro Nazionale CN00000013 CUP H93C22000450007,
the Italian PNRR MUR project PE0000023-NQSTI,
by Italian PNRR Partenariato Esteso PE4.


\end{document}